\renewcommand{\epsilon}{{\varepsilon}}
\numberwithin{equation}{section}
\newtheorem{theorem}{Theorem}[section]
\newtheorem{lemma}[theorem]{Lemma}
\newtheorem{remark}[theorem]{Remark}
\newtheorem{proposition}[theorem]{Proposition}
\newtheorem{claim}[theorem]{Claim}
\newcommand{\C}{\mathbb C}
\newcommand{\R}{\mathbb R}
\newcommand{\N}{\mathbb N}
\def\({\left(}
\def\){\right)}
\def\<{\left\langle}
\def\>{\right\rangle}
\def\Sch{{\mathcal S}}
\def\eps{\varepsilon}
\DeclareMathOperator{\RE}{Re}
\DeclareMathOperator{\IM}{Im}
\newcommand{\qtq}[1]{\quad\text{#1}\quad}
\begin{document}

\title[NLS with a repulsive inverse power potential]
{Mass-energy threshold dynamics for the focusing  NLS with a repulsive inverse-power potential}

\author[Alex H. Ardila]{Alex H. Ardila}
\address{Department of Mathematics, Universidade Federal de Minas Gerais\\ ICEx-UFMG\\ CEP
30123-970\\ MG, Brazil} 
\email{ardila@impa.br}

 \author[Masaru Hamano]{Masaru Hamano}
\address{Department of Mathematics, Graduate School of Science and Engineering Saitama University,\\
Shimo-Okubo 255, Sakura-ku, Saitama-shi, Saitama 338-8570, Japan} 
\email{m.hamano.733@ms.saitama-u.ac.jp/ess70116@mail.saitama-u.ac.jp}

\author[Masahiro Ikeda]{Masahiro Ikeda}
\address{Department of Mathematics, Faculty of Science and Technology, Keio University, 3-14-1 Hiyoshi,
Kohoku-ku, Yokohama, 223-8522, Japan/Center for Advanced Intelligence Project, Riken, Japan} 
\email{masahiro.ikeda@keio.jp/masahiro.ikeda@riken.jp}

\begin{abstract}
In this paper we study long time dynamics (i.e., scattering and blow-up) of solutions for the 
focusing  NLS  with a repulsive inverse-power potential and with initial data 
lying exactly at the mass-energy threshold, namely, when
 $E_{V}(u_{0})M(u_{0})=E_{0}(Q)M(Q)$. Moreover, we  prove failure of the uniform space-time bounds 
at the mass-energy threshold.
\end{abstract}

\subjclass[2010]{35Q55, 37K45, 35P25.}
\keywords{NLS with a inverse-power potential; Ground state; Scattering; Compactness.}

\maketitle

\medskip

\section{Introduction}
\label{sec:intro}
In this paper we consider the long time dynamics for the following nonlinear Schr\"odinger equation with a repulsive
inverse-power potential
\begin{equation}\label{NLS}\tag{NLS$_{a}$}
\begin{cases}
i\partial_{t}u+\Delta u-a{|x|^{-\mu}}u+|u|^{2}u=0,\\
u(0,x)=u_{0}\in H^{1}(\R^{3}),
\end{cases}
\end{equation}
where  $u=u(t,x)$ is a complex-valued function of $(t,x)\in \mathbb{R}\times\mathbb{R}^{3}$, $1<\mu<2$ and $a>0$.
We define the operator $H=-\Delta +V(x)$, where 
\[\text{$V(x)=a{|x|^{-\mu}}$ with $a>0$ and $1<\mu<2$.}\]
We define the energy functional on $H^{1}(\R^{3})$ as follows:
\[
E_{V}(u)=\int_{\R^{3}}\tfrac{1}{2}|\nabla u|^{2}+\tfrac{1}{2}V(x)|u|^{2}
-\tfrac{1}{4}|u|^{4}dx.
\]
Note that $E_{V}$ is the generation Hamiltonian of \eqref{NLS}. The Cauchy problem for the 
present equation has been studied by Guo, Wang and Yao\cite{GuoWangYao2018} (see also \cite{2HamanoIkeda2022}), more precisely:
for $u_{0}\in H^{1}({\mathbb{R}}^{3})$, there exist $T_{\ast}=T(\|u_{0}\|_{H^{1}})>0$ and a unique solution  $u\in C([0, T_{\ast}), H^{1}({\mathbb{R}^{3}}))$ of the Cauchy problem \eqref{NLS}. Furthermore,  the solution satisfies the conservation of energy and mass
\begin{equation*}
E_{V}(u(t))=E_{V}(u_{0})\quad  \text{and} \quad M(u(t))=M(u_{0}),
\end{equation*}
for all $t\in [0,T_{\ast})$, where
\[
M(u)=\tfrac{1}{2}\int_{\R^{3}}|u|^{2}dx.
\]
Scattering and blow-up for large date were studied for the NLS with with a repulsive
inverse-power potential in several papers in different contexts; see 
\cite{HamanoIkeda20, 2HamanoIkeda2022, GuoWangYao2018, KillipMurphyVisanZheng, LU20183174, ZhangZheng2014, Dinh2021N, MiaoZhanZheng2018} and references therein. In particular, the ground state 
solution of the free cubic nonlinear Schr\"odinger equation (i.e., \eqref{NLS} with $a=0$) plays an important role in the behavior (scattering/blow-up) of solutions for \eqref{NLS}. Recall that the ground state is the unique, radial, vanishing at infinity and positive solution 
of the following nonlinear elliptic equation
\begin{equation}\label{ELS}
-\Delta Q+Q-Q^{3}=0.
\end{equation}
We defined the Sobolev space adapted to $H$ by
\[
\|u\|_{\dot{H}^{1}_{V}}^{2}=\< Hu, u \>=\int_{\R^{3}}|\nabla u|^{2}+V(x)|u|^{2}dx.
\]
In \cite{2HamanoIkeda2022}, the authors have studied the global existence and scattering of solutions to \eqref{NLS} when the initial
data has nonnegative virial functional $P_{V}$,  where 
\begin{align*}
	P_{V}(u)&=2\| \nabla u\|^{2}_{L^{2}}-\int_{\R^{3}}(x\cdot \nabla V)|u(x)|^{2}dx-\tfrac{3}{2} \|u\|^{4}_{L^{4}}\\
          &=2\| \nabla u\|^{2}_{L^{2}}+\mu \int_{\R^{3}}\tfrac{a}{|x|^{\mu}}|u(x)|^{2}dx-\tfrac{3}{2} \|u\|^{4}_{L^{4}}.
\end{align*}
More specifically, we have the following result.
\begin{theorem}[Sub-threshold scattering, \cite{HamanoIkeda20, GuoWangYao2018}]\label{Th1}
Fix $1<\mu<2$ and $a>0$. Let $u(t)$ be the corresponding solution to \eqref{NLS} with initial data $u_{0}\in H^{1}(\R^{3})$. If $u_{0}$ obeys
\begin{equation}\label{SubT}
E_{V}(u_{0})M(u_{0})<E_{0}(Q)M(Q)
\quad\text{and}\quad
P_{V}(u_{0})\geq 0,
\end{equation}
 then the solution $u(t)$ exists globally and scatters in $H^{1}(\R^{3})$.
\end{theorem}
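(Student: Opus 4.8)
The plan is to follow the concentration--compactness/rigidity road map of Kenig and Merle, adapted to the Schr\"odinger operator $H=-\Delta+V$ with the \emph{repulsive} long-range potential $V(x)=a|x|^{-\mu}$, $1<\mu<2$, exactly as carried out for \eqref{NLS} in \cite{GuoWangYao2018,HamanoIkeda20,2HamanoIkeda2022}. The guiding principle is that repulsivity only helps: since $V\ge 0$ one has $E_{V}\ge E_{0}$ pointwise and $\|\nabla u\|_{L^{2}}\le\|u\|_{\dot H^{1}_{V}}$, and since $-x\cdot\nabla V=\mu V\ge 0$ the weighted virial identity acquires an extra nonnegative term --- precisely the term $\mu\int a|x|^{-\mu}|u|^{2}$ appearing in $P_{V}$. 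So every functional inequality driving the free cubic $3$d NLS below the ground state survives, with a favourably signed potential bonus.

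First I would establish the \emph{variational trapping and global existence} (Step~1). Combining conservation of $M$ and $E_{V}$ with the sharp Gagliardo--Nirenberg inequality $\|f\|_{L^{4}}^{4}\le C_{\mathrm{GN}}\|\nabla f\|_{L^{2}}^{3}\|f\|_{L^{2}}$ (optimal constant attained by the ground state $Q$ of \eqref{ELS}) and the bound $E_{V}\ge E_{0}$, one shows that the region $\{P_{V}\ge 0\}$ cut out by \eqref{SubT} is flow-invariant and separated from $\{P_{V}<0\}$ in the mass-energy normalized variables; hence if $E_{V}(u_{0})M(u_{0})\le(1-\delta_{0})E_{0}(Q)M(Q)$ and $P_{V}(u_{0})\ge 0$, then $P_{V}(u(t))\ge 0$ and $\|\nabla u(t)\|_{L^{2}}^{2}M(u_{0})<\|\nabla Q\|_{L^{2}}^{2}M(Q)$ on the maximal interval. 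Together with the norm equivalence $\|\nabla u\|_{L^{2}}\sim\|u\|_{\dot H^{1}_{V}}$ on bounded sets (from Hardy's inequality and $\mu<2$) this gives global existence and a quantitative coercivity bound $P_{V}(u(t))\gtrsim_{\delta_{0}}\|\nabla u(t)\|_{L^{2}}^{2}$, as well as $E_{V}(w)M(w)\gtrsim\|\nabla w\|_{L^{2}}^{2}M(w)$ whenever $P_{V}(w)\ge 0$. Step~2 is the perturbative input: the $H^{1}$ local theory, small-data scattering, and a long-time perturbation lemma for \eqref{NLS}, built on Strichartz estimates for $e^{-itH}$ (available because $V$ is in the Kato class and $H=-\Delta+V\ge 0$ has no bound states or resonances).

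Step~3, the \emph{extraction of a critical element}, is where I expect the main obstacle. Assuming scattering fails below the threshold, let $E_{c}<E_{0}(Q)M(Q)$ be the infimum of levels $E_{V}(u_{0})M(u_{0})$ at which it fails among solutions with $P_{V}(u_{0})\ge 0$. Using a linear profile decomposition adapted to $e^{-itH}$, together with Steps~1--2 and the additivity of mass and energy along profiles, I would produce a nonzero solution $u_{c}$ with $E_{V}(u_{c})M(u_{c})=E_{c}$, $P_{V}(u_{c})\ge 0$, that fails to scatter and whose orbit $\{u_{c}(t):t\in\R\}$ is precompact in $H^{1}(\R^{3})$. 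The delicate bookkeeping is the separation of profiles: those whose spatial translation parameters escape to infinity see $V$ vanish in the limit, so their nonlinear profiles solve the \emph{free} cubic NLS with mass-energy $<E_{c}<E_{0}(Q)M(Q)$ and nonnegative free virial, hence scatter by the sub-threshold theory for the free equation; profiles staying at finite distance carry the potential, and the restriction $1<\mu<2$ is exactly what makes the potential term negligible under concentration, since a rescaled bubble $\lambda_{n}\phi(\lambda_{n}\cdot)$ with $\lambda_{n}\to 0$ satisfies $\int|x|^{-\mu}|\lambda_{n}\phi(\lambda_{n}\cdot)|^{2}\sim\lambda_{n}^{\mu-1}\to 0$. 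Because $V$ is anchored at the origin, the translation center of $u_{c}$ stays bounded, upgrading precompactness modulo translations to genuine precompactness in $H^{1}$.

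Finally, Step~4 is the \emph{rigidity}. For a smooth radial cutoff $\phi_{R}$ equal to $|x|^{2}$ on $\{|x|\le R\}$ with bounded derivatives, put $z_{R}(t)=\int\phi_{R}(x)|u_{c}(t,x)|^{2}\,dx$. A direct computation using the equation gives $z_{R}''(t)=4P_{V}(u_{c}(t))+o_{R\to\infty}(1)$, with error uniform in $t$ because $\sup_{t}\int_{|x|>R}\big(|\nabla u_{c}|^{2}+|u_{c}|^{2}+|u_{c}|^{4}+V|u_{c}|^{2}\big)\,dx\to 0$ by precompactness and the repulsive sign $-x\cdot\nabla V=\mu V\ge 0$ kills any bad potential contribution. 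By the strict inequality $E_{c}<E_{0}(Q)M(Q)$ and Step~1, $P_{V}(u_{c}(t))\ge c>0$ for all $t$, so $z_{R}''\ge c/2$ on all of $\R$ for $R$ large; since $z_{R}(t)\le CR^{2}M(u_{c})$ is bounded, integrating twice forces $u_{c}\equiv 0$, contradicting $E_{c}>0$. Hence $E_{c}=E_{0}(Q)M(Q)$, which is Theorem~\ref{Th1}. When $u_{0}$ is radial one may instead skip Step~3 and argue directly by the interaction-Morawetz/virial method of Dodson and Murphy, whose virial term again profits from the repulsive sign of $V$.
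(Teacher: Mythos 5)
The paper itself does not prove Theorem~\ref{Th1} --- it is imported from \cite{HamanoIkeda20, GuoWangYao2018} --- and your concentration--compactness/rigidity outline is essentially the argument of those references, assembled from exactly the ingredients the paper collects in Section~\ref{S:preli} (the linear profile decomposition, the stability lemma, the embedding of nonlinear profiles with escaping centers, and the localized virial identity), so in substance your route is the intended one. Two small cautions: the profile decomposition here (Proposition~\ref{LPD}) carries no scaling parameters, since the problem is treated in $H^{1}$ at intercritical regularity, so your ``rescaled bubble $\lambda_{n}\phi(\lambda_{n}\cdot)$'' explanation of the restriction $1<\mu<2$ is misplaced --- that restriction actually enters through Hardy's inequality, the Strichartz/dispersive theory for $e^{-itH}$, and the vanishing of $\int V\,|\phi(\cdot-x_{n})|^{2}\,dx$ as $|x_{n}|\to\infty$; and the boundedness of the translation center of the critical element, which your rigidity step needs in order to make the virial error $o_{R\to\infty}(1)$ uniform in $t$, is asserted rather than proved, although the mechanism you name (an escaping center produces a free sub-threshold solution, which scatters, contradicting non-scattering via stability) is indeed the correct way to establish it.
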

In the case $\mu=2$, Killip, Murphy, Visan and J. Zheng \cite{KillipMurphyVisanZheng}  proved a similar scattering result for $a>-\frac{1}{4}$.

The theorem above is a consequence of the fact that the solutions to \eqref{NLS} obeys the global spacetime bound
\begin{equation}\label{GlobalBoundB}
\|u\|_{L^{5}_{t,x}(\R\times\R^{3})}<C(E_{V}(u_{0}), M(u_{0}), E_{0}(Q), M(Q)),
\end{equation}
for some $C:(0,  E_{0}(Q)M(Q))\rightarrow (0, \infty)$.

 In our first result we show that  Theorem~\ref{Th1} is sharp, i.e., the constant $C(\cdot)$ diverges as 
we approach the mass-energy threshold. Indeed,

\begin{theorem}[Failure of uniform space-time bounds at threshold.]\label{BoundTheorem} 
Fix $1<\mu<2$ and $a>0$.
There exists a sequence of global solutions $u_{n}$ of \eqref{NLS} such that
\[
E_{V}(u_{n})M(u_{n})\nearrow  E_{0}(Q)M(Q)
\quad \text{and}\quad 
P_{V}(u_{n}(0))\rightarrow 0,
\]
as $n\to \infty$ with
\[
\lim_{n\to \infty}\|u_{n}\|_{L^{5}_{t,x}(\R\times\R^{3})}=\infty.
\]
\end{theorem}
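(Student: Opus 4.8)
The idea is standard for threshold problems: exhibit an explicit sequence of initial data, built from rescalings of the ground state $Q$, whose mass-energy product approaches the threshold from below and whose virial is nonnegative (in fact $\to 0$), but for which the $L^5_{t,x}$ norm must blow up. The mechanism: if the norms stayed uniformly bounded, a standard perturbation/stability argument would force the limiting solution — which is exactly the soliton $e^{it}Q$ (or something close to it) — to scatter, contradicting the fact that the soliton does not scatter.

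Let me lay out the construction. The natural ansatz is
\[
u_n(0,x) = \lambda_n^{1/2}\, e^{i\theta_n}\, (1-\delta_n)^{1/2}\, Q(\lambda_n x),
\]
with $\lambda_n \to 0$, so the profile spreads out and the potential $V(x)=a|x|^{-\mu}$ contributes negligibly in all the relevant functionals. First I would compute, using the Pohozaev identities for $Q$ (namely $\|\nabla Q\|_{L^2}^2$, $\|Q\|_{L^2}^2$, $\|Q\|_{L^4}^4$ are all comparable to $E_0(Q)$ via the elliptic equation \eqref{ELS}), that for the free functionals $E_0(u_n(0))M(u_n(0)) = E_0(Q)M(Q)(1+o(1))$ and $P_0(u_n(0)) = o(1)$ after choosing $\delta_n$ appropriately to make the virial slightly positive. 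Then I would estimate the potential corrections: since $\lambda_n \to 0$,
\[
\int_{\R^3} V(x)|u_n(0,x)|^2\,dx = a\lambda_n^{\mu}\int_{\R^3}\frac{|Q(y)|^2}{|y|^{\mu}}\,dy\,(1+o(1)) = O(\lambda_n^{\mu}) \to 0,
\]
which is finite because $1<\mu<2<3$ makes $|y|^{-\mu}$ locally integrable in $\R^3$ and $Q$ decays exponentially. Thus $E_V(u_n(0))M(u_n(0)) \to E_0(Q)M(Q)$ and $P_V(u_n(0)) = P_0(u_n(0)) + O(\lambda_n^\mu)$. By tuning $\delta_n$ (a small mass/amplitude perturbation making the free energy dip just below threshold by an amount dominating the $O(\lambda_n^\mu)$ potential error), I can arrange simultaneously $E_V(u_n)M(u_n) \nearrow E_0(Q)M(Q)$ and $P_V(u_n(0)) \geq 0$ with $P_V(u_n(0)) \to 0$. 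By Theorem~\ref{Th1} each $u_n$ is then global and scatters, so $\|u_n\|_{L^5_{t,x}} < \infty$ for each fixed $n$.

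The core of the argument is the contradiction step. Suppose $\|u_n\|_{L^5_{t,x}(\R\times\R^3)} \leq M < \infty$ along the sequence. The initial data $u_n(0)$ converge (after modding out by the symmetries $e^{i\theta_n}$ and the scaling $\lambda_n$, equivalently looking at the solutions in rescaled coordinates where the potential degenerates) to $Q$ in $H^1$. In these rescaled coordinates the equation converges to the free cubic NLS, and $e^{it}Q$ is a global solution of the free equation with infinite $L^5_{t,x}$ norm (the soliton does not scatter). A long-time perturbation / stability lemma for \eqref{NLS} — of the type underlying the proof of the global bound \eqref{GlobalBoundB}, applied with $e^{it}Q$ as approximate solution and the vanishing potential term as the error — then shows that for $n$ large the true solution $u_n$ (in rescaled coordinates) stays close to $e^{it}Q$ on any fixed time interval, and by iterating on all of $\R$ one concludes $\|u_n\|_{L^5_{t,x}} \to \infty$, contradicting the uniform bound $M$. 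Hence $\lim_n \|u_n\|_{L^5_{t,x}} = \infty$.

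The main obstacle is the perturbation argument: one must run the stability theory for \eqref{NLS} with the \emph{free} soliton as the approximate solution while the genuine equation carries the potential $a|x|^{-\mu}u$, and verify that the error term $a|x|^{-\mu}e^{it}Q$, measured in the dual Strichartz / $L^{5/4}_{t,x}$-type norm over a time interval of length $\sim\lambda_n^{-2}$ in rescaled time, is small — this is where the gain $\lambda_n^\mu$ (together with $\mu<2$, so that the error beats the time-interval length) is essential and must be tracked carefully. A secondary technical point is making the profile-decomposition/compactness statement precise: identifying $Q$ as the only possible limiting profile and ruling out mass/energy leaking to other scales or to spatial infinity, which follows from the threshold constraint $E_0(Q)M(Q)$ being attained in the limit together with the variational characterization of $Q$ as the unique (up to symmetries) minimizer.
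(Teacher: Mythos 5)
Your overall strategy (perturb off the soliton and use stability to show the true solutions track $e^{it}Q$ on longer and longer time intervals) is the same as the paper's, but your concrete construction fails at the very first step. With the spreading ansatz $u_n(0)=\lambda_n(1-\delta_n)Q(\lambda_n\cdot)$ (note the prefactor must be $\lambda_n$, not $\lambda_n^{1/2}$, for the cubic scaling to preserve $E_0M$ and to map solitons to solitons), one has $M(u_n(0))=\lambda_n^{-1}(1-\delta_n)^2M(Q)$ while
\[
\int_{\R^{3}}V(x)|u_n(0,x)|^2\,dx=a\,\lambda_n^{\mu-1}(1-\delta_n)^2\int_{\R^{3}}|y|^{-\mu}|Q(y)|^2\,dy,
\]
so the potential's contribution to the mass-energy product is of size $\lambda_n^{\mu-2}$, which \emph{diverges} as $\lambda_n\to 0$ precisely because $\mu<2$. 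Hence $E_V(u_n)M(u_n)\to+\infty$, not $E_0(Q)M(Q)$: the condition $1<\mu<2$, which you invoke as helping the error estimate, is exactly what destroys the spreading construction. (Concentrating instead, $\lambda_n\to\infty$, fixes the product but places the profile on the singularity of $V$, so the error $V\tilde v_n$ is not small in dual Strichartz norms; indeed $\int|y|^{-2\mu}|Q|^2\,dy$ is not even finite for $\mu\geq 3/2$.)

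The paper degenerates a different parameter: translation rather than scaling. It takes $\varphi_n=(1-\epsilon_n)Q(\cdot-x_n)$ with $|x_n|\to\infty$, so that $M$ and $E_0$ are untouched and the potential correction is $O(|x_n|^{-\mu})$ by the exponential decay of $Q$; choosing $\epsilon_n$ appropriately gives $E_V(\varphi_n)M(\varphi_n)\nearrow E_0(Q)M(Q)$ with $P_V(\varphi_n)>0$, so each $u_n$ is global and scattering by Theorem~\ref{Th1}. The approximate solution is then the translated soliton $(1-\epsilon_n)e^{it}[\chi_nQ](\cdot-x_n)$, cut off away from the origin, and Lemma~\ref{StabilityNLS} on $[-T,T]$ yields $\|u_n\|_{L^{5}_{t,x}([-T,T]\times\R^{3})}\gtrsim_Q T$ for $n$ large; letting $T\to\infty$ finishes the proof, with no contradiction argument or profile decomposition needed. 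If you replace your scaling family by this translated family (and simplify the final step accordingly), the rest of your outline goes through.
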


The purpose of this paper is to study the long time dynamics (i.e., scattering and blow-up) for \eqref{NLS}
exactly at the mass-energy threshold, i.e., when $E_{V}(u_{0})M(u_{0})=E_{0}(Q)M(Q)$.
We now state the main result of this paper.

\begin{theorem}[Threshold dynamics]\label{Th2} Fix $1<\mu<2$ and $a>0$.
Let $u(t)$ be the corresponding solution to \eqref{NLS} with initial data $u_{0}\in H^{1}(\R^{3})$. 
\begin{enumerate}[label=\rm{(\roman*)}]
\item If $u_{0}$ obeys
\begin{equation}\label{Thres}
E_{V}(u_{0})M(u_{0})=E_{0}(Q)M(Q) \quad \text{and} \quad P_{V}(u_{0})\geq 0,
\end{equation}
then the solution $u(t)$ to \eqref{NLS} is global and $u\in L^{5}_{t, x}(\R\times\R^{3})$.
Consequently, the solution $u$ scatters in both directions.
\item If $u_{0}$ obeys
\begin{equation}\label{BlowC}
E_{V}(u_{0})M(u_{0})=E_{0}(Q)M(Q) \quad \text{and} \quad P_{V}(u_{0})< 0,
\end{equation}
and $xu_{0}\in L^{2}(\R^{3})$ or $u_{0}$ is radially symmetric,
then  the solution $u$ to \eqref{NLS} blows up in both time directions.
\end{enumerate}
\end{theorem}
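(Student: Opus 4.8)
The plan is to run the Kenig--Merle concentration--compactness/rigidity scheme at the exact mass--energy threshold, exploiting throughout that the potential is repulsive — so that $V\ge 0$ and $x\cdot\nabla V=-\mu V\le 0$ — and, decisively, that the free ground state $Q$ is \emph{not} a solution of \eqref{NLS}; these two facts together collapse the usual threshold dichotomy and produce the clean scattering/blow-up alternative with no exceptional solutions. I would begin with the variational preliminaries. The Pohozaev identities for \eqref{ELS} give $\|\nabla Q\|_{L^2}^2=3\|Q\|_{L^2}^2$ and $\|Q\|_{L^4}^4=4\|Q\|_{L^2}^2$, hence $E_0(Q)M(Q)=\tfrac1{12}\|\nabla Q\|_{L^2}^2\|Q\|_{L^2}^2$, and a direct computation yields the Pohozaev-type identity $P_V(v)=6E_V(v)-\|\nabla v\|_{L^2}^2-(3-\mu)\,a\!\int_{\R^3}|x|^{-\mu}|v|^2\,dx$. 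Combined with the sharp Gagliardo--Nirenberg inequality $\|v\|_{L^4}^4\le C_{\mathrm{GN}}\|\nabla v\|_{L^2}^3\|v\|_{L^2}$ (saturated by $Q$ and its scalings, translations and phases), these give, for $v\neq 0$ on the threshold set $E_V(v)M(v)=E_0(Q)M(Q)$: \emph{(a)} if $P_V(v)\ge 0$ then $\|\nabla v\|_{L^2}^2\le 6E_0(Q)M(Q)/M(v)$; and \emph{(b)} $P_V(v)\neq 0$. Indeed, $P_V(v)=0$ forces $\|\nabla v\|_{L^2}\|v\|_{L^2}\ge\|\nabla Q\|_{L^2}\|Q\|_{L^2}$ by Gagliardo--Nirenberg, and the displayed identity gives $E_V(v)=\tfrac16\|\nabla v\|_{L^2}^2+\tfrac{3-\mu}{6}\,a\!\int|x|^{-\mu}|v|^2\,dx>\tfrac16\|\nabla v\|_{L^2}^2$ (as $\mu<2$ and $v\neq 0$), whence $E_V(v)M(v)>\tfrac1{12}\|\nabla v\|_{L^2}^2\|v\|_{L^2}^2\ge\tfrac1{12}\|\nabla Q\|_{L^2}^2\|Q\|_{L^2}^2=E_0(Q)M(Q)$, a contradiction. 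Since $E_V$ and $M$ are conserved and $t\mapsto P_V(u(t))$ is continuous, \emph{(b)} shows that the sign of $P_V(u(t))$ is constant along the flow; under \eqref{Thres} this yields $P_V(u(t))>0$ for all $t$, and \emph{(a)} then bounds $\|\nabla u(t)\|_{L^2}$ uniformly, so $u$ is global.

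For the scattering claim in part~(i), assume $\|u\|_{L^5_{t,x}(\R\times\R^3)}=\infty$. Adapting the concentration--compactness scheme of \cite{GuoWangYao2018,HamanoIkeda20,2HamanoIkeda2022} to the threshold level, I would extract a global, non-scattering critical solution $u_c$ with $E_V(u_c)M(u_c)=E_0(Q)M(Q)$, $P_V(u_c(t))>0$, and orbit $\{u_c(t):t\in\R\}$ precompact in $H^1(\R^3)$. The heart of the construction is the linear profile decomposition adapted to $H=-\Delta+V$: each profile either concentrates at a bounded location, its nonlinear profile solving \eqref{NLS}, or translates to spatial infinity, where $V$ disappears and its nonlinear profile solves the free cubic NLS. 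Profiles strictly below the threshold scatter by Theorem~\ref{Th1} and its translation-invariant free analogue, so a single profile with vanishing remainder must survive, exactly at the threshold and with nonnegative virial. A surviving profile escaping to infinity produces a non-scattering free-NLS solution at the free mass--energy threshold with $P_0\ge 0$ (where $P_0(w)=2\|\nabla w\|_{L^2}^2-\tfrac32\|w\|_{L^4}^4$); by the Duyckaerts--Roudenko classification of threshold solutions the only such solutions are, up to symmetries, the rescaled ground states and the exceptional solutions that converge exponentially to $Q$. But for a threshold solution of \eqref{NLS} a bump translating to infinity must be accelerated outward by the repulsive potential, hence acquires momentum and extra energy comparable to $V(x(t))M(u_c)>0$; tracking $P_V(u_c(t))=P_0(\text{bump})+\mu a\!\int|x|^{-\mu}|u_c(t)|^2\,dx+o(1)$ along the escaping trajectory then forces $P_V(u_c(t))$ to change sign — the potential term decays only polynomially in $|x(t)|$ while $P_0$ of the bump is either $0$ or exponentially small — contradicting $P_V(u_c(t))>0$. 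Hence no profile escapes, $u_c$ has precompact orbit, and the rigidity step applies: precompactness together with \emph{(b)} rules out $\inf_t P_V(u_c(t))=0$ (along a time sequence where $P_V\to 0$ one would get $H^1$-convergence to a nonzero threshold element with $P_V=0$), so $P_V(u_c(t))\ge\delta>0$ uniformly. Feeding this and $x\cdot\nabla V\le 0$ into the localized virial identity $\tfrac{d^2}{dt^2}\!\int\psi_R|u_c(t)|^2\,dx=4P_V(u_c(t))+o_R(1)$, with $\psi_R(x)\simeq|x|^2$ on $\{|x|\le R\}$ and tails uniform in $t$ by compactness, contradicts the bound $\sup_t\bigl|\tfrac{d}{dt}\!\int\psi_R|u_c(t)|^2\,dx\bigr|\lesssim R$. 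Therefore $\|u\|_{L^5_{t,x}(\R\times\R^3)}<\infty$; since $u$ is also global, it scatters in both time directions.

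For the blow-up claim in part~(ii), \emph{(b)} gives $P_V(u(t))<0$ on the maximal interval. Suppose $u$ were global forward in time. It cannot scatter: a scattering solution behaves like a free wave, so $\|u(t)\|_{L^4}\to 0$ and $\int|x|^{-\mu}|u(t)|^2\,dx\to 0$ while $\|\nabla u(t)\|_{L^2}^2\to\|u_{+}\|_{\dot{H}^{1}_{V}}^2$, whence $P_V(u(t))\to 2\|u_{+}\|_{\dot{H}^{1}_{V}}^2\ge 0$, incompatible with $P_V<0$. So $u$ would be global and non-scattering, and the concentration--compactness argument above — with the escaping scenario excluded by the same mechanism, the exceptional free solution converging exponentially to $Q$ whereas its potential contribution decays only polynomially along the escape — produces a global, non-scattering critical element $u_c$ with $P_V<0$ and precompact orbit; precompactness and \emph{(b)} then give $P_V(u_c(t))\le-\delta<0$ uniformly. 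The hypothesis ($xu_0\in L^2$ or $u_0$ radial) then makes a virial quantity available — the finite variance $\int|x|^2|u_c(t)|^2\,dx$ with $\tfrac{d^2}{dt^2}\int|x|^2|u_c(t)|^2\,dx=4P_V(u_c(t))\le-4\delta$, or, for radial $u_0$, a truncated weight $\psi_R\simeq|x|^2$ with the error $\int_{|x|>R}|u_c|^4\,dx$ controlled by the radial Sobolev estimate $\|u_c\|_{L^\infty(|x|>R)}^2\lesssim R^{-2}\|\nabla u_c\|_{L^2}\|u_c\|_{L^2}$ — and this quantity is forced negative in finite time, contradicting global existence of $u_c$. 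Hence $u$ is not global in either time direction; by the blow-up alternative of the local theory it blows up in both directions.

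The step I expect to be the main obstacle is the treatment of profiles escaping to spatial infinity in the concentration--compactness argument: because \eqref{NLS} lacks translation invariance, such profiles are governed by the free cubic NLS at its own mass--energy threshold, so one must import the Duyckaerts--Roudenko classification and then use the repulsivity and the decay of $V$ to show that the exceptional free solutions and the rescaled ground-state solitons cannot persist as critical elements for \eqref{NLS} — quantitatively, that following them along the escape forces the virial functional $P_V$ to change sign. The companion technical points are passing from mere precompactness of the critical orbit to the uniform sign bounds $P_V(u_c(t))\ge\delta$, respectively $\le-\delta$, via \emph{(b)}, and closing the truncated virial argument under the radial (or finite-variance) hypothesis in part~(ii).
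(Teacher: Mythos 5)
Your overall skeleton (variational dichotomy, concentration--compactness, rigidity) matches the paper's, and your direct proof that $P_V$ cannot vanish on the threshold set via the identity $P_V(v)=6E_V(v)-\|\nabla v\|_{L^2}^2-(3-\mu)a\int|x|^{-\mu}|v|^2\,dx$ is correct and arguably cleaner than the paper's appeal to the non-attainment of $d_a$. However, the decisive steps contain genuine gaps.

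First, your mechanism for excluding the escaping-soliton scenario does not work as stated. You write $P_V(u_c(t))=P_0(\text{bump})+\mu a\int|x|^{-\mu}|u_c(t)|^2\,dx+o(1)$ and claim this ``forces $P_V$ to change sign''; but both terms on the right are nonnegative (the potential is repulsive), so no sign change can be deduced. The dangerous case is precisely a bump converging to $Q$ (so $P_0\to 0$, $\delta(t)\to 0$) while translating to infinity, and nothing in your argument contradicts $P_V>0$ there. Relatedly, your derivation of the uniform bound $P_V(u_c(t))\ge\delta>0$ fails: the orbit is precompact only \emph{modulo translations} $x(t)$, and along a sequence with $P_V(u(t_n))\to 0$ the translations may escape to infinity, in which case the $H^1$-limit is a \emph{free}-NLS threshold element for which $P_0=0$ is perfectly possible (namely $Q$ itself). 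This is exactly why the paper cannot close the argument with a single truncated virial estimate and instead develops the modulation analysis of Section~\ref{S:Modulation} (in particular the estimate $e^{-2|y(t)|}/|y(t)|^{2}+|y'(t)|+\bigl(\int V|u|^2\bigr)^{1/2}\lesssim|\delta(t)|$), the quantitative virial lower bound $F_{\infty,0}[u(t)]\ge c\,\delta(t)$ of Lemma~\ref{Compa}, and the two-sided dichotomy of Propositions~\ref{Bounded11} and~\ref{NaoBounded} on the spatial center. The Duyckaerts--Roudenko classification is indeed used, but only in Lemma~\ref{DeltaZero}, to show that when $|x(t_n)|\to\infty$ with $\delta(t_n)\not\to 0$ the limiting free solution scatters in at least one direction.

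Second, your treatment of part (ii) diverges from the paper and inherits the same gap. The paper does \emph{not} run concentration--compactness for blow-up; it argues directly: assuming global existence, the Cauchy--Schwarz-type inequality of Lemma~\ref{OtherGN} yields $|f'(t)|^2\lesssim (f''(t))^2 f(t)$ for $f(t)=\|xu(t)\|_{L^2}^2$, whence $f'(t)\le Ce^{-ct}$ and $\int_t^\infty|\delta|\le Ce^{-ct}$; the modulation estimates ($|\alpha'|\lesssim|\delta|$, $|y'|\lesssim|\delta|$) then force $\delta(t)\to 0$ with $y(t)$ convergent, contradicting $e^{-2|y(t)|}/|y(t)|^2\lesssim|\delta(t)|$. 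Your route requires $P_V(u_c(t))\le-\delta<0$ uniformly, which you again justify by compactness-modulo-translations; at the threshold $P_V$ may approach $0$ along an escaping bump, so the standard convexity/virial argument does not close without the exponential decay and modulation input. The radial case in the paper is likewise resolved by proving a posteriori that $|x|u_0\in L^2$ (via monotonicity of the truncated virial and convergence to $Q$ along a sequence), not by forcing the truncated variance negative directly.
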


In the case $a=0$, a similar result was originally proven by Duyckaerts-Roudenko \cite{DuyckaertsRou2010}.
However, due to the presence of the potential, the method developed in \cite{DuyckaertsRou2010} cannot be applied to \eqref{NLS}. 
To overcome this problem, the proof of scattering result in Theorem~\ref{Th2} is based on the work of  Miao, Murphy 
and Zheng\cite{MiaMurphyZheng2021}. An analogous result to Theorem~\ref{Th2}~(i) for the NLS in the exterior of a convex obstacle 
was obtained by \cite{DuyLanRou2022}.  Recently, the same argument have been applied to the 
focusing NLS with a repulsive Dirac delta potential; see \cite{ArdilaInui2022} for more details.
On the other hand, our proof of blow-up result is based on the argument developed in \cite{DuyckaertsRou2010, TInu2022, Ardila2022}.
For more details, we refer to Section~\ref{Sec22}.

\begin{remark}
Recently, in \cite[Theorem 7.2.]{Dinh2021N}, using the argument of Dodson and Murphy \cite{DodsonMurphy2018} the author shows that
under condition \eqref{Thres}  the corresponding solution $u(t)$ to Cauchy problem \eqref{NLS} either (i) scatters in $H^{1}(\R^{3})$ 
forward in time, or (ii) there exist $\left\{t_{n}\right\}_{n\in\N}\subset \R$ with $t_{n}\to \infty$, 
$\left\{y_{n}\right\}_{n\in\N}\subset \R^{3}$ with  $|y_{n}|\to \infty$ and $\lambda, \theta\in \R$ so that
$u(t_{n}, \cdot+y_{n})\to e^{i \theta}\lambda Q$ strongly in $H^{1}(\R^{3})$.
As a consequence of the Theorem~\ref{Th2}~(i), we can rule out the second possibility.
\end{remark}

This present paper is organized as follows. In Section~\ref{S:preli} we give some results that are necessary for later sections.
In particular, the linear profile decomposition, the stability result to \eqref{NLS}, localized Virial identities, and variational analysis
of the ground state related to \eqref{ELS}. In Section~\ref{S:Compactness} we show that if the scattering 
result of Theorem~\ref{Th2} fails, then  we can find a forward global 
solution $u\in C([0, \infty); H^{1}(\R^{3}))$ to \eqref{NLS}  which satisfies that $\left\{u(t, \cdot+x_{0}(t)):t\in [0, \infty)\right\}$ is pre-compact in $H^{1}(\R^{3})$ for some   function $x_{0}:[0, \infty) \to \R$ (cf. Proposition~\ref{Criticalsolution}).
In Section~\ref{S:Modulation} we discuss modulation (Proposition~\ref{Modilation11}). In Section~\ref{S:Impossibility}, using the result of Section~\ref{S:Compactness} (Proposition~\ref{Criticalsolution}) and adopting the method of Miao, Murphy and Zheng\cite{MiaMurphyZheng2021} we establish  the scattering part of Theorem~\ref{Th2}.
Section~\ref{Sec22} is devoted to the proof of the blow-up result given in Theorem~\ref{Th2}.
Finally, in Section~\ref{S:Fail} we prove Theorem~\ref{BoundTheorem}.

\textbf{Notations.} Given two positive quantities $A$, $B$ we write $A\lesssim B$ or $B\gtrsim A$  to signify $A\leq CB$ for some postive constant $C>0$. When  $A \lesssim B \lesssim A$,  we write $A\sim B$. Recall that $H:=-\Delta +V(x)$, where $V(x)=a|x|^{-\mu}$ with $a>0$.
We write
\[
\|u\|_{\dot{H}^{1}_{V}}^{2}:=\< Hu, u \>=\int_{\R^{3}}|\nabla u|^{2}+V(x)|u|^{2}dx,
\]
and $\|u\|_{{H}^{1}_{V}}^{2}=\|u\|_{\dot{H}^{1}_{V}}^{2}+\|u\|_{L^{2}}^{2}$.

Throughout the paper, we will use the spaces $\dot{S}^{s}(I)$ for $s\geq 0$,
\[
\dot{S}_{V}^{s}(I)=L^{\infty}_{t} \dot{H}^{s}_{V}(I\times \R^{3})\cap L^{2}_{t} \dot{H}^{s, 6}_{V}(I\times \R^{3}),
\quad
{S}_{V}^{s}(I)=L^{\infty}_{t} {H}^{s}_{V}(I\times \R^{3})\cap L^{2}_{t} {H}^{s, 6}_{V}(I\times \R^{3}).
\]
Finally, for $f\in H^{1}(\R^{3})$ we denote
\[
\delta(f):=\|Q\|_{\dot{H}^{1}}^{2}-\|f\|_{\dot{H}^{1}_{V}}^{2}.
\]

\section{Preliminaries}\label{S:preli}
In this section  we review the tools that will be needed in the proof of Theorems~\ref{BoundTheorem} and \ref{Th2}.
\subsection{Cauchy problem and profile decomposition}

First, we have the following result.

\begin{proposition}[Theorem 1.1 in \cite{GuoWangYao2018}]\label{Condition-scatte}
Fix $a>0$ and $0<\mu< 2$. Let  $u_{0}\in H^{1}(\R^{3})$ and $u(t)$ be the corresponding solution of Cauchy problem \eqref{NLS}.
If $u$ is a global  solution to \eqref{NLS} with $\|u\|_{L^{5}_{t,x}(\R \times \R^{3})}< \infty$,
then $u(t)$ scatters in  $H^{1}$.
\end{proposition}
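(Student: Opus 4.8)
The plan is to establish the standard implication ``a global solution with finite scattering norm scatters''. There are two steps: first upgrade the a priori bound $\|u\|_{L^{5}_{t,x}(\R\times\R^{3})}<\infty$ to a global Strichartz bound $\|u\|_{S^{1}_{V}(\R)}<\infty$, and then read off convergence to a linear profile from Duhamel's formula together with the dominated convergence theorem. Throughout one uses that $H=-\Delta+V$ with $V(x)=a|x|^{-\mu}$, $a>0$, $1<\mu<2$, is self-adjoint and nonnegative, that (by Hardy's inequality and $V\ge0$) the norms $\|f\|_{\dot H^{1}}$ and $\|H^{1/2}f\|_{L^{2}}=\|f\|_{\dot H^{1}_{V}}$ are equivalent on $H^{1}(\R^{3})$, and that $e^{-itH}$ obeys the same $L^{2}$- and $H$-adapted Strichartz estimates as the free Schr\"odinger flow; these facts are available in the works cited above (see \cite{GuoWangYao2018,2HamanoIkeda2022}).

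For the first step, since $\|u\|_{L^{5}_{t,x}(\R\times\R^{3})}$ is finite we may partition $\R=\bigcup_{j=1}^{N}I_{j}$, $I_{j}=[t_{j},t_{j+1})$, into finitely many intervals with $\|u\|_{L^{5}_{t,x}(I_{j})}\le\eta$ for a small $\eta>0$ to be fixed. Writing Duhamel's formula on $I_{j}$,
\begin{equation*}
u(t)=e^{-i(t-t_{j})H}u(t_{j})+i\int_{t_{j}}^{t}e^{-i(t-s)H}\big(|u|^{2}u\big)(s)\,ds,
\end{equation*}
applying the Strichartz inequality and the nonlinear estimate (obtained by H\"older in space--time, using that $(\tfrac{10}{3},\tfrac{10}{3})$ is Schr\"odinger-admissible in $\R^{3}$, together with the $H$-adapted fractional product rule)
\begin{equation*}
\big\||u|^{2}u\big\|_{N^{1}_{V}(I_{j})}\lesssim\|u\|_{L^{5}_{t,x}(I_{j})}^{2}\,\|u\|_{S^{1}_{V}(I_{j})},
\end{equation*}
where $N^{1}_{V}(I_{j})$ denotes the dual Strichartz space adapted to $H$, one gets $\|u\|_{S^{1}_{V}(I_{j})}\le C\|u(t_{j})\|_{H^{1}_{V}}+C\eta^{2}\|u\|_{S^{1}_{V}(I_{j})}$. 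Choosing $\eta$ so that $C\eta^{2}\le\tfrac12$ and running the usual continuity argument yields $\|u\|_{S^{1}_{V}(I_{j})}\le 2C\|u(t_{j})\|_{H^{1}_{V}}$; in particular $\|u(t_{j+1})\|_{H^{1}_{V}}\le2C\|u(t_{j})\|_{H^{1}_{V}}$, so iterating over the $N$ intervals gives $\|u(t_{j})\|_{H^{1}_{V}}\le(2C)^{j}\|u_{0}\|_{H^{1}_{V}}<\infty$ for every $j$, and hence $\|u\|_{S^{1}_{V}(\R)}\le\sum_{j=1}^{N}\|u\|_{S^{1}_{V}(I_{j})}<\infty$.

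For the second step, set
\begin{equation*}
u_{+}:=u_{0}+i\int_{0}^{+\infty}e^{isH}\big(|u|^{2}u\big)(s)\,ds,
\end{equation*}
which, by Strichartz and the same nonlinear estimate, converges in $H^{1}(\R^{3})$ once $\|u\|_{S^{1}_{V}(\R)}<\infty$. Duhamel's formula then gives, for $t>0$,
\begin{equation*}
\big\|u(t)-e^{-itH}u_{+}\big\|_{H^{1}}=\Big\|\int_{t}^{+\infty}e^{-i(t-s)H}\big(|u|^{2}u\big)(s)\,ds\Big\|_{H^{1}}\lesssim\big\||u|^{2}u\big\|_{N^{1}_{V}((t,+\infty))}\lesssim\|u\|_{L^{5}_{t,x}((t,+\infty)\times\R^{3})}^{2}\,\|u\|_{S^{1}_{V}(\R)},
\end{equation*}
which tends to $0$ as $t\to+\infty$ by the dominated convergence theorem, since the global $L^{5}_{t,x}$ norm is finite; the argument as $t\to-\infty$ is symmetric. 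This is the asserted scattering in $H^{1}(\R^{3})$, and since $1<\mu<2$ makes $V$ short range one may equivalently phrase it with the free propagator $e^{it\Delta}$ using existence and completeness of the wave operators for $H$.

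The only ingredient that is not entirely routine is the harmonic analysis adapted to $H=-\Delta+a|x|^{-\mu}$: the $L^{2}$- and $\dot H^{1}$-level Strichartz estimates for $e^{-itH}$, the equivalence of $\|H^{1/2}\cdot\|_{L^{p}}$ with $\||\nabla|\cdot\|_{L^{p}}$ in the relevant range of $p$, and the $H$-adapted fractional product rule. These hold for repulsive inverse-power potentials with $1<\mu<2$ and are established in the references cited above, so in the write-up I would simply invoke them; the interval subdivision, the continuity/bootstrap argument, and the Duhamel tail estimate are then completely standard.
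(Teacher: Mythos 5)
Your proof is correct and is essentially the argument behind the statement: the paper does not prove this proposition itself but imports it verbatim from \cite{GuoWangYao2018} (Theorem 1.1), and the proof there is precisely your two-step scheme --- partition $\R$ into finitely many intervals of small $L^{5}_{t,x}$ norm and bootstrap Duhamel plus Strichartz to a global $S^{1}_{V}$ bound, then construct $u_{\pm}$ and estimate the Duhamel tail. The nonroutine ingredients you defer (the $H$-adapted Strichartz estimates, equivalence of Sobolev norms, and fractional product rule for $H=-\Delta+a|x|^{-\mu}$ with $a>0$, $1<\mu<2$) are indeed established in \cite{GuoWangYao2018, 2HamanoIkeda2022}, so the deferral is legitimate; the only trivial bookkeeping point is that the two unbounded intervals of your partition should be anchored at their finite endpoints when you write Duhamel's formula.
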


\begin{remark}[Existence of wave operators; Theorem 1.1 in \cite{GuoWangYao2018}]\label{Exitencescat}
From Theorem 1.1~(iii) in \cite{GuoWangYao2018}, we have that given $\psi\in H^{1}(\R^{3})$,
there exist $T>0$ and a solution $v\in C([T,\infty), H^{1}(\R^{3}))$ to \eqref{NLS} such that
\[
\|v(t)-e^{itH}\psi\|_{H^{1}}\rightarrow 0 \quad\text{ as $t\to\infty$}.
\]
A similar result holds in the negative time direction.
\end{remark}

\begin{proposition}[Linear profile decomposition; Lemma 2.12 in \cite{GuoWangYao2018}] 
\label{LPD}
Fix $a>0$ and $1<\mu < 2$. Let $\{ \varphi_n\}_{n\in \N}$ be a bounded sequence in $H^1(\mathbb{R}^{3})$. Then, up to subsequence, we have the decomposition 
\begin{align*}
	\varphi_n=\sum_{j=1}^{J} e^{i t_{n}^j H} \tau_{x_n^j} \psi^j +R_n^J, \quad \forall J \in \N, 
\end{align*}
where 
$t_n^j\in \mathbb{R}$, $x_n^j \in \mathbb{R}^{3}$, $\psi^j \in H^1(\mathbb{R}^{3})\setminus\{0\}$, and the following statements hold. 
\begin{itemize}
\item for any fixed $1\leq j\leq J$, 
\begin{align*}
&\text{either } t_n^j=0 \text{ for any } n \in \N, \text{ or } t_n^j \to \pm \infty \text{ as } n\to \infty;
\\
&\text{either } x_n^j=0 \text{ for any } n \in \N, \text{ or } |x_n^j| \to +\infty \text{ as } n\to \infty.
\end{align*}
\item orthogonality of the parameters: namely, for $1\leq j\neq k\leq J$
\[ |t_n^j -t_n^k|+|x_n^j-x_n^k| \to \infty \text{ as } n \to \infty. \]
\item asymptotic smallnes property
\[ \forall \eps >0, \exists J=J(\eps) \in \N \text{ such that } \limsup_{n\to \infty} \| e^{-itH}R_n^J\|_{L_{t,x}^5} <\eps. \]
\item asymptotic Pythagorean expansions: for any $ J\in \N$
\begin{align*} 
\| \varphi_n\|_{L^2}^2 &=\sum_{j=1}^J \|\psi^j\|_{L^2}^2 +\| R_n^J\|_{L^2}^2 +o_n(1),
\\
\| \varphi_n\|_{\dot{H}^{1}_{V}}^2 &=\sum_{j=1}^J \| \tau_{x_n^j} \psi^j \|_{\dot{H}^{1}_{V}}^2 +\| R_n^J\|_{\dot{H}^{1}_{V}}^2 +o_n(1).
\end{align*}
Moreover, we have 
\[ \| \varphi_n\|_{L^4}^4 =\sum_{j=1}^J \| e^{i t_n^j H} \tau_{x_n^j} \psi^j \|_{L^4}^4 +\| R_n^J\|_{L^4}^4 +o_n(1)   \quad \forall J\in \N. \]
\end{itemize}
\end{proposition}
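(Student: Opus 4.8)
The plan is to run the by-now-standard induction-on-profiles scheme (in the spirit of Keraani and B\'egout--Vargas, and of Killip--Visan for dispersive equations), with the free propagator replaced by $e^{itH}$. The inputs one needs about $H=-\Delta+V$, $V(x)=a|x|^{-\mu}$, $1<\mu<2$, $a>0$, are the following. Since $V\geq 0$, the heat semigroups are pointwise dominated, $0\le e^{-tH}(x,y)\le e^{t\Delta}(x,y)$, which gives Gaussian-type kernel bounds, a Mikhlin multiplier theorem for $H$, and hence a Littlewood--Paley theory with projectors $P_N^H$ comparable to the usual $P_N$, together with the Sobolev equivalences $\|H^{s/2}f\|_{L^p}\sim\|(-\Delta)^{s/2}f\|_{L^p}$ in the relevant ranges (in particular $\|f\|_{\dot{H}^{1}_{V}}\sim\|f\|_{\dot H^1}$). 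One also needs the full set of Strichartz estimates for $e^{itH}$ --- available for repulsive inverse-power potentials in this range --- and a frequency-localised dispersive bound $\|P_N^H e^{itH}\|_{L^1\to L^\infty}\lesssim|t|^{-3/2}$, valid because $H$ is nonnegative with no bound state and no zero-energy resonance. I would take all of these as known. One remark: the decomposition carries no scaling parameter, because an $H^1$ bound prevents concentration at small or large scales (and, for $1<\mu<2$, $V$ anyway breaks scale invariance); only the translations $x_n^j$ and time translations $t_n^j$ appear.

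The heart of the matter is an inverse Strichartz inequality: if $\{f_n\}\subset H^1(\R^3)$ is bounded with $\limsup_n\|e^{-itH}f_n\|_{L^5_{t,x}}=:\eps>0$, then, after passing to a subsequence, there exist $\psi\in H^1(\R^3)\setminus\{0\}$, $t_n\in\R$, $x_n\in\R^3$ obeying the dichotomy ($t_n\equiv 0$ or $|t_n|\to\infty$; $x_n\equiv 0$ or $|x_n|\to\infty$) such that $\tau_{-x_n}e^{-it_nH}f_n\rightharpoonup\psi$ in $H^1$, with the quantitative lower bound $\|\psi\|_{H^1}\gtrsim\eps^{\beta}\big(\sup_n\|f_n\|_{H^1}\big)^{1-\beta}$ for some $\beta>0$, and with the one-step decouplings of $\|\cdot\|_{L^2}^2$, $\|\cdot\|_{\dot{H}^{1}_{V}}^2$, and $\|\cdot\|_{L^4}^4$ for $f_n$ against $e^{it_nH}\tau_{x_n}\psi$ and the remainder $f_n-e^{it_nH}\tau_{x_n}\psi$. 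I would prove this by first upgrading Strichartz to a refined Strichartz inequality controlling $\|e^{-itH}f\|_{L^5_{t,x}}$ by $\|f\|_{H^1}^{1-\theta}\big(\sup_N\|P_N^H e^{-itH}f\|_{L^\infty_{t,x}}\big)^{\theta}$, via interpolation against the dispersive bound and Bernstein; this produces a dyadic frequency (necessarily of order one, by the $H^1$ bound), a time $t_n$, and a spatial centre $x_n$ at which a nontrivial bubble concentrates, and weak-$*$ compactness in $H^1$ delivers $\psi$. The clean dichotomy is then arranged by passing to a subsequence and absorbing bounded translations and time shifts into a redefined profile; crucially, in the escaping regime $|x_n|\to\infty$ the decay of $V$ lets one replace $e^{it_nH}$ (in the translated frame) by the free flow up to an error vanishing in $H^1$, which is what makes the nonlinear $L^4$-decoupling go through.

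With the inverse Strichartz inequality in hand I would iterate: set $R_n^0=\varphi_n$, and, given $R_n^{J-1}$, stop if $\limsup_n\|e^{-itH}R_n^{J-1}\|_{L^5_{t,x}}=0$; otherwise extract $(\psi^J,t_n^J,x_n^J)$ from $R_n^{J-1}$ and set $R_n^J=R_n^{J-1}-e^{it_n^JH}\tau_{x_n^J}\psi^J$. Tracking the one-step $L^2$ and $\dot{H}^{1}_{V}$ decouplings at each stage shows, in the standard way, that the parameters of distinct profiles are asymptotically orthogonal, $|t_n^j-t_n^k|+|x_n^j-x_n^k|\to\infty$, and that $\sum_j\|\psi^j\|_{H^1}^2\lesssim\sup_n\|\varphi_n\|_{H^1}^2<\infty$; hence $\|\psi^J\|_{H^1}\to0$, and the quantitative lower bound in the inverse Strichartz inequality forces $\limsup_n\|e^{-itH}R_n^J\|_{L^5_{t,x}}\to 0$ as $J\to\infty$, which is the asymptotic smallness property. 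Summing the one-step decouplings and using orthogonality of parameters to kill cross terms --- and, for the $L^4$ expansion, the dispersive decay which sends the profiles with $|t_n^j|\to\infty$ to zero in $L^4_x$, together with the passage to the free flow for escaping profiles --- yields the Pythagorean expansions; a diagonal extraction over $J$ concludes.

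I expect the two genuine difficulties to be: (i) the harmonic analysis for the perturbed operator, namely the $H$-adapted Littlewood--Paley/multiplier theory and the frequency-localised dispersive and refined Strichartz estimates for $e^{itH}$; and (ii) inside the concentration-compactness machinery, the careful comparison of $e^{itH}$ with the free flow $e^{it\Delta}$ in the escaping case $|x_n^j|\to\infty$, which is what the nonlinear $L^4$-orthogonality ultimately rests on. Everything else --- the refined Strichartz extraction, the weak-limit bookkeeping, the orthogonality of parameters, and the diagonal argument --- is routine.
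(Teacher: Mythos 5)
The paper does not prove this proposition: it is imported verbatim as Lemma~2.12 of \cite{GuoWangYao2018}, so there is no in-paper argument to compare against. Judged on its own terms, your outline has the right global architecture (inverse Strichartz inequality plus induction on profiles, no scaling parameter because the problem is $H^1$-subcritical, and you correctly isolate the escaping regime $|x_n^j|\to\infty$ as the place where $e^{itH}$ must be compared with the free flow). But there is one input you ``take as known'' that is genuinely not available here and on which your whole extraction step rests: the frequency-localised dispersive bound $\|P_N^H e^{itH}\|_{L^1\to L^\infty}\lesssim |t|^{-3/2}$. The potential $V(x)=a|x|^{-\mu}$ with $1<\mu<2$ is \emph{long-range}: it decays slower than $|x|^{-2}$, ordinary wave operators do not exist, and the known pointwise dispersive estimates (Journ\'e--Soffer--Sogge, Yajima, Goldberg--Schlag, etc.) all require decay strictly faster than what is assumed here. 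Repulsivity of $V$ buys you Strichartz estimates (via Morawetz/local smoothing), but not an $L^1\to L^\infty$ decay bound, and the low-frequency part of $P_N^H e^{itH}$ is exactly where the long-range tail obstructs the standard arguments. Your refined Strichartz inequality, and hence the quantitative lower bound $\|\psi\|_{H^1}\gtrsim \eps^\beta(\sup_n\|f_n\|_{H^1})^{1-\beta}$ that drives the induction, is therefore unsupported as written. (The $H$-adapted Mikhlin/Littlewood--Paley theory and Sobolev equivalences are less problematic --- heat-kernel domination plus $0<\mu<2$ makes these plausible --- but they too are asserted rather than sourced.)

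The route actually taken in this line of literature (\cite{GuoWangYao2018}, and the analogous decompositions in \cite{KillipMurphyVisanZheng} and \cite{MiaMurphyZheng2021}) avoids any dispersive estimate for $e^{itH}$: one starts from the classical free $H^1$ linear profile decomposition for $e^{it\Delta}$ and then \emph{converts} it into one for $e^{itH}$, using (a) weak $H^1$-compactness in a fixed frame to extract profiles whose spatial centres stay bounded (which may then be set to $x_n^j\equiv 0$), and (b) the convergence $\|\,e^{-itH}\tau_{x_n}\psi-\tau_{x_n}e^{it\Delta}\psi\,\|\to 0$ in Strichartz-type norms when $|x_n|\to\infty$, which holds because the potential seen from the moving frame vanishes (this is the content of the local-constancy estimates such as Lemma~2.7 and (2.20) of \cite{GuoWangYao2018} that the present paper invokes elsewhere). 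The asymptotic smallness of $R_n^J$ in $L^5_{t,x}$ is then inherited from the free decomposition via Strichartz for $e^{itH}$, with no inverse Strichartz inequality for $H$ needed. If you want to salvage your write-up, you should either replace the dispersive-estimate input by this reduction to the free flow, or supply a proof of the frequency-localised dispersive bound for this class of long-range potentials --- and the latter is not a routine matter.
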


For the following result, recall that for $s\geq 0$,
\[
\dot{S}_{V}^{s}(I)=L^{\infty}_{t} \dot{H}^{s}_{V}(I\times \R^{3})\cap L^{2}_{t} \dot{H}^{s, 6}_{V}(I\times \R^{3}),
\quad
{S}_{V}^{s}(I)=L^{\infty}_{t} {H}^{s}_{V}(I\times \R^{3})\cap L^{2}_{t} {H}^{s, 6}_{V}(I\times \R^{3}).
\]

\begin{lemma}[Stability; Lemma 2.3 in \cite{GuoWangYao2018} and Theorem 4.10 in \cite{2HamanoIkeda2022}]\label{StabilityNLS}
Fix $a>0$ and $0<\mu<2$. Let $I\subset \R$ be a time interval containing $t_{0}$ and  let $\tilde{u}$ satisfy
\[ 
(i\partial_{t}-H) \tilde{u}=-|\tilde{u}|^{2}\tilde{u}+e, \quad 
\tilde{u}(t_{0})=\tilde{u}_{0}
\]
on $I\times\R^3$ for some function $e:I\times\R^{3}\rightarrow \C$. Fix $u_{0}\in H^{1}(\R^{3})$ and suppose
\[
\| \tilde{u}_{0}  \|_{H^{1}}+\| {u}_{0}  \|_{H^{1}}\leq E\qtq{and} 
	\| \tilde{u}  \|_{L_{t,x}^{5}(I\times\R^{3})}\leq L
\]
for some $E$, $L>0$.  
Assume  the smallness conditions 
\[
	\|u_{0}-\tilde{u}_{0} \|_{\dot{H}^{\frac{1}{2}}}\leq \epsilon\qtq{and}	\||\nabla|^{\frac{1}{2}} e  \|_{N(I)}\leq \epsilon,
\]
for some $0<\epsilon<\epsilon_{0}=\epsilon_{0}(\mbox{E,L})>0$. Here,
\[
N(I)=L^{1}_{t}L^{2}_{x}(I\times \R^{3})+L^{\frac{10}{7}}_{t,x}(I\times \R^{3})+L^{\frac{5}{3}}_{t}L^{\frac{30}{23}}_{x}(I\times \R^{3}).
\]
Then there exists a unique solution $u$ to  \eqref{NLS} with initial data $u_{0}$ at the time $t=t_{0}$ satisfying
\[
	\|u-\tilde{u}\|_{\dot{S}^{\frac{1}{2}}(I\times \R^{3})}\leq C(E,L)\epsilon
	\qtq{and}	\|u\|_{{S}^{1}(I\times \R^{3})}\leq C(E,L).
\]
\end{lemma}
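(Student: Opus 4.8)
The plan is to run the standard long-time perturbation (``stability'') scheme for the $\dot H^{1/2}$-critical cubic NLS, adapted to the Hamiltonian $H=-\Delta+V$. The only place where the potential genuinely intervenes is the linear theory, and there I would simply invoke the two inputs recorded in \cite{GuoWangYao2018,2HamanoIkeda2022}: the full family of Strichartz estimates for $e^{-itH}$, and the norm equivalences $\|f\|_{\dot H^{s}_{V}}\sim\|f\|_{\dot H^{s}}$ for $0\le s\le 1$. The latter hold precisely because $a>0$ and $1<\mu<2$ make $V(x)=a|x|^{-\mu}$ a repulsive, sub-Hardy perturbation, so that $H\ge 0$ has no bound states and behaves, on these spaces, like $-\Delta$. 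The remaining nonlinear ingredient is the fractional product rule, used to estimate $|\nabla|^{1/2}$ of cubic expressions and of differences of cubic nonlinearities in the dual Strichartz norm $N(I)$, paired via H\"older in time with the $L^{5}_{t,x}$ norm.

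First I would subdivide: partition $I$ into $J\sim(L/\delta)^{5}$ consecutive subintervals $I_{k}=[t_{k},t_{k+1}]$ on each of which $\|\tilde u\|_{L^{5}_{t,x}(I_{k})}\le\delta$, where $\delta=\delta(E)>0$ is a small constant to be fixed; by the Duhamel formula for $\tilde u$, Strichartz, and the bound on $e$, this also gives $\|\tilde u\|_{\dot S^{1/2}(I_{k})}\lesssim_{E}1$ on each piece. Next comes a single step of a bootstrap. On $I_{k}$ assume inductively that the genuine solution $u$ has been continued up to $t_{k}$ with $\|u(t_{k})-\tilde u(t_{k})\|_{\dot H^{1/2}}\le\epsilon_{k}$, and set $w=u-\tilde u$, so that
\[
(i\partial_{t}-H)w=-\bigl(|\tilde u+w|^{2}(\tilde u+w)-|\tilde u|^{2}\tilde u\bigr)-e,\qquad w(t_{k})=u(t_{k})-\tilde u(t_{k}).
\]
Applying Strichartz on $I_{k}$: the forcing $e$ contributes $\lesssim\epsilon$, the data contributes $\lesssim\epsilon_{k}$, and the cubic difference---pointwise of size $\lesssim(|\tilde u|^{2}+|w|^{2})|w|$---is controlled in $N(I_{k})$ by $(\delta^{2}+\|w\|_{\dot S^{1/2}(I_{k})}^{2})\,\|w\|_{\dot S^{1/2}(I_{k})}$ via the fractional product rule. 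A continuity argument then gives, for $\delta$ and $\epsilon$ small enough, $\|w\|_{\dot S^{1/2}(I_{k})}\lesssim\epsilon_{k}+\epsilon$, hence $\|u(t_{k+1})-\tilde u(t_{k+1})\|_{\dot H^{1/2}}\le\epsilon_{k+1}:=C(E)(\epsilon_{k}+\epsilon)$.

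Iterating this over $k=0,\dots,J-1$ (in both directions from $t_{0}$) yields $\epsilon_{k}\le C(E)^{k}\epsilon$, so it suffices to choose $\epsilon_{0}=\epsilon_{0}(E,L)$ so small that $C(E)^{J}\epsilon_{0}\ll 1$; this propagates the smallness across all of $I$, and the local theory rules out blow-up on $I$ along the way. Summing the $\dot S^{1/2}$ bounds over the $J$ subintervals gives $\|u-\tilde u\|_{\dot S^{1/2}(I)}\le C(E,L)\epsilon$ and, in particular, $\|u\|_{L^{5}_{t,x}(I)}\le L+1$. With this a priori bound on the scattering-size norm in hand, I would run the same bookkeeping one derivative higher---estimating $\langle\nabla\rangle u$ in Strichartz norms and using $\langle\nabla\rangle(|u|^{2}u)\lesssim|u|^{2}\langle\nabla\rangle u$---to obtain $\|u\|_{S^{1}(I)}\le C(E,L)$, which is the remaining claim.

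I expect the only real obstacle to be the linear theory with the singular potential: one needs the Strichartz estimates for $e^{-itH}$ and the equivalences $\|f\|_{\dot H^{s}_{V}}\sim\|f\|_{\dot H^{s}}$ (together with the companion $L^{p}$-based statements that make the fractional product rule interact correctly with $H$) to hold uniformly despite the singularity of $V$ at the origin. Once these are granted---and they are, in the cited works, for exactly this range $1<\mu<2$, $a>0$---the nonlinear portion of the argument is identical, line for line, to the potential-free cubic NLS on $\R^{3}$.
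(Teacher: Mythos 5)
The paper offers no proof of this lemma at all: it is imported verbatim by citation from Lemma 2.3 of \cite{GuoWangYao2018} and Theorem 4.10 of \cite{2HamanoIkeda2022}, and your proposal is precisely the standard long-time perturbation scheme (subdivision of $I$ into $\sim(L/\delta)^{5}$ pieces of small $L^{5}_{t,x}$ norm, a Strichartz/fractional-Leibniz bootstrap for $w=u-\tilde u$ at $\dot H^{1/2}$ regularity, iteration across the pieces, then persistence of regularity for the $S^{1}$ bound) that those references carry out. The only inputs you defer --- Strichartz estimates for $e^{-itH}$ and the equivalence of Sobolev norms adapted to $H$ with the standard ones for $a>0$, $1<\mu<2$ --- are exactly the linear ingredients established in the cited works, so the proposal is correct and follows essentially the same route.
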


For $a\geq 0$, we define on $H^{1}(\R^{3})$ the following functional (recall that $V(x)=a{|x|^{-\mu}}$):
\begin{align}\label{FS}
S_{V}(f)&=E_{V}(f)+\tfrac{1}{2}\|f\|^{2}_{L^2}
=\tfrac{1}{2}\| f\|^{2}_{\dot{H}^{1}_{V}}+\tfrac{1}{2}\|f\|^{2}_{L^2}-\tfrac{1}{4}\|f\|_{L^{4}}^{4}, \qtq{for $f\in H^{1}$.}
\end{align}

\begin{lemma}[Embedding nonlinear profiles; Lemma 2.13 in \cite{GuoWangYao2018}]\label{P:embedding}
Fix $a>0$ and $1<\mu<2$.
Let $\{t_n\}_{n\in \N}$ satisfy $t_n\equiv 0$ or $t_n\to\pm\infty$, and let $\{x_n\}_{n\in \N}\subset \R^{3}$ satisfy $|x_{n}| \to \infty$.
Suppose $\phi\in H^{1}(\R^{3})$ obeys 
\begin{align}\label{PriC}
	&S_{0}(\phi)<S_{0}(Q) \qtq{and} P_{0}(\phi)\geq 0
	\qtq{if $t_{n}\equiv0$}
\end{align}
or
\begin{align}\label{seC}
	\tfrac{1}{2}\|\phi\|_{{H}^{1}}<S_{0}(Q) \qtq{if $t_{n}\to \pm \infty$.}
\end{align}
Then for $n$ sufficiently large, there exists a global solution $v_n$ to \eqref{NLS} so that
\[
v_n(0)=\phi_n \qtq{and} \|v_n\|_{{S}_{V}^{s}(\R)}\lesssim_{\|\phi\|_{H^{1}}} 1,
\]
where
\[
\phi_n(x)=e^{-it_{n}H}\tau_{x_{n}}\phi(x).
\]
Furthermore, for any $\epsilon>0$ there exist $N=N(\epsilon)\in \N$ and a smooth compactly supported function
$\chi_{\epsilon}\in C^{\infty}_{c}(\R\times \R^{3})$  such that for  $n\geq N$, we have
\begin{align}\label{BewAprox11}
\big\| v_{n}(t,x)-\chi_{\epsilon}(t+t_{n}, x-x_{n})  \big\|_{X(\R\times \R^{3})}
&<\epsilon,
\end{align}
 where 
\[
X\in\{L_{t,x}^{5}, L_{t,x}^{\frac{10}{3}},  L_{t}^{\frac{30}{7}}L^{\frac{90}{31}}_{x},   
L^{\frac{30}{7}}_{t}\dot{H}_{x}^{\frac{31}{60},\frac{90}{31}} \}.
\]
\end{lemma}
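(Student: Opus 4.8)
The plan is to transfer the scattering theory of the \emph{free} cubic NLS ($a=0$) to \eqref{NLS} by means of the stability result, Lemma~\ref{StabilityNLS}: since $|x_n|\to\infty$, a profile translated by $x_n$ barely feels the potential $V(x)=a|x|^{-\mu}$, so a suitably translated solution of the free equation is a good approximate solution of \eqref{NLS}. Concretely, let $w$ solve $i\partial_t w+\Delta w+|w|^2w=0$ on $\R^3$ as follows: if $t_n\equiv0$, take $w(0)=\phi$, so that \eqref{PriC} is exactly the Duyckaerts--Holmer--Roudenko sub-threshold hypothesis and $w$ is global with $\|w\|_{\dot{S}^{1/2}(\R)}+\|w\|_{S^{1}(\R)}\lesssim_{\|\phi\|_{H^{1}}}1$; if $t_n\to\pm\infty$, take $w$ to be the free-NLS wave-operator solution, $\|w(s)-e^{is\Delta}\phi\|_{H^{1}}\to0$ as $s\to\pm\infty$, so that $M(w)=M(\phi)$, $E_{0}(w)=\tfrac12\|\phi\|_{\dot{H}^{1}}^{2}$ and $P_{0}(w(s))\to2\|\phi\|_{\dot{H}^{1}}^{2}>0$, whence \eqref{seC} places $w$ strictly below the mass--energy threshold with positive virial and $w$ is again global with $\|w\|_{\dot{S}^{1/2}(\R)}+\|w\|_{S^{1}(\R)}\lesssim_{\|\phi\|_{H^{1}}}1$. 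In all cases set $\tilde v_n(t,x):=w(t+t_n,x-x_n)$ (with $t_n:=0$ in the first case). Since $w$ solves the free equation and $V$ is a multiplication operator,
\[
(i\partial_t-H)\tilde v_n=-|\tilde v_n|^{2}\tilde v_n+e_n,\qquad e_n:=-V\tilde v_n=-a|x|^{-\mu}\,w(t+t_n,x-x_n).
\]

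I would then verify the hypotheses of Lemma~\ref{StabilityNLS} on $I=\R$, $t_0=0$, with $\tilde u=\tilde v_n$, $u_0=\phi_n$. The bounds $\|\tilde v_n(0)\|_{H^{1}}+\|\phi_n\|_{H^{1}}\lesssim\|\phi\|_{H^{1}}$ and $\|\tilde v_n\|_{L^{5}_{t,x}}=\|w\|_{L^{5}_{t,x}}\lesssim1$ are immediate. If $t_n\equiv0$, then $\tilde v_n(0)=\tau_{x_n}\phi=\phi_n$. If $t_n\to\pm\infty$, then (recalling that $e^{-itH}$ reduces to $e^{it\Delta}$ when $V=0$)
\[
\|\phi_n-\tilde v_n(0)\|_{\dot{H}^{1/2}}\le\big\|\big(e^{it_n\Delta}-e^{-it_nH}\big)\tau_{x_n}\phi\big\|_{\dot{H}^{1/2}}+\big\|w(t_n)-e^{it_n\Delta}\phi\big\|_{\dot{H}^{1/2}},
\]
the second term tending to $0$ by the choice of $w$, and the first by the Duhamel identity $e^{it\Delta}-e^{-itH}=i\int_{0}^{t}e^{-i(t-s)H}Ve^{is\Delta}\,ds$ together with the Strichartz estimates for $e^{-itH}$, the equivalence $\dot{H}^{1/2}_{V}\sim\dot{H}^{1/2}$, and the dispersive decay of $e^{is\Delta}\phi$ (see below). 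The remaining, crucial, hypothesis is $\||\nabla|^{1/2}e_n\|_{N(\R)}\to0$.

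The heart of the argument is the \emph{potential estimate}. I would approximate $w$ by $\Psi\in C^{\infty}_{c}(\R\times\R^{3})$ with $\|w-\Psi\|_{\dot{S}^{1/2}(\R)}<\eta$ and bound $\||\nabla|^{1/2}e_n\|_{N(\R)}$ by $\||\nabla|^{1/2}\big(V(w-\Psi)(\cdot+t_n,\cdot-x_n)\big)\|_{N(\R)}+\||\nabla|^{1/2}\big(V\Psi(\cdot+t_n,\cdot-x_n)\big)\|_{N(\R)}$. For the first term one uses the uniform estimate $\||\nabla|^{1/2}(Vg)\|_{N(\R)}\lesssim\|g\|_{\dot{S}^{1/2}(\R)}$, obtained by distributing $|\nabla|^{1/2}$ via the fractional Leibniz rule and splitting $V=V\chi_{\{|x|<1\}}+V\chi_{\{|x|\ge1\}}$, where $V\chi_{\{|x|<1\}}\in L^{p_1}$ for $p_1<3/\mu$ and $V\chi_{\{|x|\ge1\}}\in L^{p_2}\cap L^{\infty}$ for $p_2>3/\mu$, then pairing these pieces by H\"older into admissible components of $N(\R)$ against Strichartz norms of $g$ --- this works precisely because $1<\mu<2$, so $3/\mu\in(3/2,3)$ leaves room to choose the exponents. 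For the second term, $\Psi(\cdot+t_n,\cdot-x_n)$ is supported in $\{|x|\gtrsim|x_n|\}$, where $V\lesssim|x_n|^{-\mu}$ and $|\nabla V|\lesssim|x_n|^{-\mu-1}$, so it is $\lesssim_{\Psi}|x_n|^{-\mu}\to0$. Letting $n\to\infty$ then $\eta\to0$ gives $\||\nabla|^{1/2}e_n\|_{N(\R)}\to0$; the very same ingredients, applied to $Ve^{is\Delta}\tau_{x_n}\phi$ after splitting the time integral at $|s|=S$ (for $|s|\le S$ the translate escapes the singularity of $V$ uniformly; for $|s|>S$ the dispersive decay $\|e^{is\Delta}\phi\|_{L^{r}_x}\lesssim|s|^{-\sigma}$, $\sigma>1$, makes the tail summable), close the data comparison above. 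Lemma~\ref{StabilityNLS} then yields, for $n$ large, a global solution $v_n$ of \eqref{NLS} with $v_n(0)=\phi_n$, $\|v_n\|_{S^{1}_{V}(\R)}\lesssim_{\|\phi\|_{H^{1}}}1$ and $\|v_n-\tilde v_n\|_{\dot{S}^{1/2}(\R)}\to0$. Finally, \eqref{BewAprox11} follows by interpolating between the $\dot{S}^{1/2}(\R)$-smallness and the $S^{1}(\R)$-boundedness furnished by Lemma~\ref{StabilityNLS} (so that $\|v_n-\tilde v_n\|_{X}<\epsilon/2$ for $n$ large, for every norm in the list $X$), choosing $\chi_{\epsilon}\in C^{\infty}_{c}(\R\times\R^{3})$ with $\|w-\chi_{\epsilon}\|_{X}<\epsilon/2$ (density), and using translation invariance: $\|\tilde v_n-\chi_{\epsilon}(\cdot+t_n,\cdot-x_n)\|_{X}=\|w-\chi_{\epsilon}\|_{X}$.

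The main obstacle is exactly the potential estimate: $V=a|x|^{-\mu}$ lies in no single $L^{p}(\R^{3})$, so controlling it at the level of half a derivative against Strichartz norms forces the near/far decomposition of $V$ and a careful bookkeeping of exponents, and it is here that $1<\mu<2$ is used; this singular potential is also what prevents a verbatim transcription of the free-case lemma. (Alternatively, for the mere existence and uniform $S^{1}_{V}$-bound of $v_n$ one could invoke Theorem~\ref{Th1} and \eqref{GlobalBoundB}, since \eqref{PriC}, resp.\ \eqref{seC}, makes $\phi_n$ satisfy \eqref{SubT} for $n$ large --- but establishing \eqref{BewAprox11} still requires comparing $v_n$ to $\tilde v_n$ via stability, so this shortcut saves little.)
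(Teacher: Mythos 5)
The paper offers no proof of this lemma---it is imported verbatim as Lemma 2.13 of \cite{GuoWangYao2018}---so there is no in-paper argument to compare against. Your reconstruction (a global sub-threshold free-NLS profile, translated so that $|x_n|\to\infty$ turns $V\tilde v_n$ into a perturbative error via a near/far splitting of $V=a|x|^{-\mu}$, closed by the stability Lemma~\ref{StabilityNLS} and density/interpolation for the norms in $X$) is exactly the standard route taken in that reference and its relatives, and it is sound.
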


\begin{lemma}[Hardy's inequality, \cite{ZhangZheng2014}]\label{GHI}
Fix $1<p<\infty$ and $0<\mu<3$. Then, the
following inequality holds
\[
\int_{\R^{3}}\tfrac{|u(x)|^{p}}{|x|^{\mu}}dx\lesssim_{p,\mu}
\||\nabla|^{\frac{\mu}{p}} u\|^{p}_{L^{p}}.
\]
In particular, if $0<\mu<2$, then we have that the embedding $H^{1}\hookrightarrow L^{2}(\sqrt{V}dx)$ is continuous.
\end{lemma}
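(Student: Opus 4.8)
The plan is to recognize the stated estimate as the fractional Hardy inequality and to derive it from the $L^{p}$-boundedness of a single integral operator with a homogeneous kernel. Writing $s=\mu/p$, the hypothesis $0<\mu<3$ becomes $0<s<3/p$, and the claimed bound is equivalent to $\||x|^{-s}u\|_{L^{p}(\R^{3})}\lesssim_{p,\mu}\||\nabla|^{s}u\|_{L^{p}(\R^{3})}$. First I would dispose of the trivial case in which the right-hand side is infinite; otherwise set $f:=|\nabla|^{s}u\in L^{p}$. Since $0<s<3$, one has the Riesz-potential representation $u=|\nabla|^{-s}f=c_{s}I_{s}f$, where $I_{s}g(x)=c_{s}\int_{\R^{3}}|x-y|^{s-3}g(y)\,dy$, and since moreover $s<3/p$, Hardy--Littlewood--Sobolev places $u$ in $L^{p^{\ast}}$ with $1/p^{\ast}=1/p-s/3>0$, so every integral below converges absolutely (for general $u$ one first proves the bound for $f\in\mathcal S$ and extends by density). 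It then suffices to show that the operator
\[
Tf(x):=|x|^{-s}I_{s}f(x)=c_{s}\int_{\R^{3}}|x|^{-s}\,|x-y|^{s-3}\,f(y)\,dy
\]
is bounded on $L^{p}(\R^{3})$.

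The key observation is that the kernel $K(x,y)=c_{s}|x|^{-s}|x-y|^{s-3}$ is rotation invariant and homogeneous of degree $-3$, i.e.\ $K(\lambda x,\lambda y)=\lambda^{-3}K(x,y)$ for $\lambda>0$. I would exploit this by applying Schur's test with the power weight $w(x)=|x|^{-3/(pp')}$: homogeneity and rotation invariance reduce the two Schur conditions to the finiteness of the two scalar integrals $A=\int_{\R^{3}}|K(e,z)|\,|z|^{-3/p}\,dz$ and $A'=\int_{\R^{3}}|K(z,e)|\,|z|^{-3/p'}\,dz$ for a fixed unit vector $e$, after which $\|T\|_{L^{p}\to L^{p}}\le A^{1/p'}(A')^{1/p}$. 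Finiteness of $A$ and $A'$ is then checked by splitting into the regions near $z=0$, near $z=e$, and $|z|\to\infty$: the near-origin pieces converge because $p>1$ (and because $s<3/p$, for $A'$), the near-$e$ pieces because $s>0$, and the contributions from infinity because $s<3/p$ --- which is precisely the hypothesis $\mu<3$. This establishes the fractional Hardy inequality, hence the lemma. (Equivalently, one may simply quote the diagonal case of the Stein--Weiss weighted Hardy--Littlewood--Sobolev inequality and verify its admissibility conditions.)

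For the last assertion I would specialize to $p=2$ and $0<\mu<2$. The inequality just established gives
\[
\int_{\R^{3}}V(x)|u|^{2}\,dx=a\int_{\R^{3}}\frac{|u|^{2}}{|x|^{\mu}}\,dx\lesssim\big\||\nabla|^{\mu/2}u\big\|_{L^{2}}^{2}=\int_{\R^{3}}|\xi|^{\mu}|\widehat u(\xi)|^{2}\,d\xi,
\]
and since $0<\mu/2\le 1$ one has $|\xi|^{\mu}\le 1+|\xi|^{2}$, so the right-hand side is at most $\|u\|_{L^{2}}^{2}+\|\nabla u\|_{L^{2}}^{2}=\|u\|_{H^{1}}^{2}$. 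Hence $\|\sqrt{V}\,u\|_{L^{2}}\lesssim\|u\|_{H^{1}}$, which is the continuity of the embedding $H^{1}(\R^{3})\hookrightarrow L^{2}(\sqrt V\,dx)$.

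The step I expect to require the most care is the case analysis for the integrals $A$ and $A'$: one must verify that the decay rate of the kernel at infinity is exactly what forces $s<3/p$, so that the admissible range is precisely $0<\mu<3$ as claimed. By comparison, the Riesz-potential representation, the density reduction, and the estimates near the singularities are routine.
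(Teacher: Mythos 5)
Your proof is correct. Note that the paper does not actually prove this lemma: it is quoted verbatim from \cite{ZhangZheng2014}, so there is no internal argument to compare against. What you have written is the standard, self-contained proof of the fractional Hardy inequality: reduce to the $L^{p}$-boundedness of $f\mapsto |x|^{-s}I_{s}f$ with $s=\mu/p$, observe that its kernel is nonnegative, rotation invariant and homogeneous of degree $-3$, and run Schur's test with the power weight $|x|^{-3/(pp')}$; this is exactly the diagonal case of Stein--Weiss, as you say. Your exponent bookkeeping is right, and each of the three hypotheses $p>1$, $s>0$, $s<3/p$ (i.e.\ $0<\mu<3$) is genuinely needed for the convergence of $A$ and $A'$, so the stated range is sharp for this argument. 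One cosmetic remark: for $A'$ the constraint $s<3/p$ is forced by the singularity at the origin (where the integrand behaves like $|z|^{-s-3/p'}$), while the contribution from infinity there is $|z|^{-3-3/p'}$ and converges unconditionally; your sentence attributes $s<3/p$ to the behaviour at infinity for both integrals, which is only accurate for $A$. The final step, specializing to $p=2$ and using $|\xi|^{\mu}\le 1+|\xi|^{2}$ for $0<\mu\le 2$ to land in $H^{1}$, is exactly how the embedding $H^{1}\hookrightarrow L^{2}(\sqrt V\,dx)$ is meant to follow.
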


\subsection{Varational analysis}
First, we recall here some well-known properties of the ground state. We have the following sharp Gagliardo-Nirenberg inequality, 
\begin{equation}\label{GI}
\|f\|^{4}_{L^{4}}\leq C_{GN}\|\nabla f\|^{3}_{L^{2}}\|f\|_{L^{2}},
\end{equation}
where 
\begin{equation}
\label{C_GN}
C_{GN}=\tfrac{\|Q\|^{4}_{L^{4}}}{\|\nabla Q\|^{3}_{L^{2}}\|Q\|_{L^{2}}}
\end{equation}

It is well-known that the ground state $Q$ satisfies the Pohozaev's identities
\begin{equation}\label{PoQ}
E_{0}(Q)=\tfrac{1}{2}\| Q\|^{2}_{L^{2}}=\tfrac{1}{6}\|\nabla Q\|^{2}_{L^{2}}=\tfrac{1}{8}\| Q\|^{4}_{L^{4}}.
\end{equation}
Moreover, by straightforward calculations we deduce
\begin{align}\label{IDbG}
	[C_{GN} \|Q\|_{L^{2}}]^{-\frac{2}{3}}
=\tfrac{3}{4} \|Q\|^{\frac{4}{3}}_{L^{4}}.
\end{align}

For $a\geq0$, we define the following variational problem (recall that $V(x)=a{|x|^{-\mu}}$):

\begin{equation}\label{dinf}
d_{a}:=\inf\left\{S_{V}(\varphi): \varphi\in H^{1}(\R^{3})\setminus\left\{{0}\right\}, P_{V}(\varphi)=0 \right\},
\end{equation}
 where the functional $S_{V}$ is given by \eqref{FS}.

For the proof of the following lemma see \cite[Lemmas 3.5 and 3.6]{GuoWangYao2018}

\begin{lemma}\label{minimumS}
Fix $a>0$ and $0<\mu<2$. Then $d_{a}$ is never attained for any $a>0$. Moreover, $d_{a}=S_{0}(Q)$.
\end{lemma}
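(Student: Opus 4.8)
The plan is to prove the two inequalities $d_a \le S_0(Q)$ and $d_a \ge S_0(Q)$ separately, then deduce non-attainment from the strict monotonicity forced by the repulsive potential. For the upper bound $d_a \le S_0(Q)$, I would exploit the fact that the potential $V(x) = a|x|^{-\mu}$ decays at infinity. Take a sequence of translates $Q(\cdot - y_n)$ with $|y_n| \to \infty$; by dominated convergence (using $1 < \mu < 2$ so that $V \in L^{3/\mu}_{\mathrm{loc}} + L^\infty$ at infinity and Lemma~\ref{GHI} to control the quadratic form), one has $\|Q(\cdot-y_n)\|_{\dot H^1_V}^2 \to \|Q\|_{\dot H^1}^2$ and hence $P_V(Q(\cdot - y_n)) \to P_0(Q) = 0$ and $S_V(Q(\cdot-y_n)) \to S_0(Q)$. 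Since $P_V$ is slightly perturbed from zero, a small rescaling $\varphi_n^\lambda(x) = \lambda^{3/2} Q(\lambda(x - y_n))$ (or the scaling $f \mapsto \lambda f(\cdot)$ adapted to the $P_V = 0$ constraint) can be used to land exactly on the constraint set $\{P_V = 0\}$ at negligible cost in $S_V$, giving $d_a \le S_0(Q) + o(1)$, hence $d_a \le S_0(Q)$.

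For the lower bound $d_a \ge S_0(Q)$, the key observation is the pointwise inequality $V(x) = a|x|^{-\mu} > 0$, so that $\|f\|_{\dot H^1_V}^2 > \|f\|_{\dot H^1}^2$ for every nonzero $f \in H^1$, and consequently $P_V(f) > P_0(f)$ and $S_V(f) > S_0(f)$ pointwise on $H^1 \setminus \{0\}$. Thus if $P_V(\varphi) = 0$ then $P_0(\varphi) < 0$; by the standard subcritical scaling argument (the map $\lambda \mapsto P_0(\varphi^\lambda)$ for $\varphi^\lambda(x) = \lambda^{3/2}\varphi(\lambda x)$, whose sign change is governed by $\|\nabla\varphi\|_{L^2}^2$ versus $\|\varphi\|_{L^4}^4$) there is a unique $\lambda_0 \in (0,1)$ with $P_0(\varphi^{\lambda_0}) = 0$, and along this scaling $S_0$ is increasing, so $S_0(Q) = d_0 \le S_0(\varphi^{\lambda_0}) < S_0(\varphi) < S_V(\varphi)$. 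Taking the infimum over the constraint set gives $d_a \ge S_0(Q)$; here I would lean on the characterization $d_0 = S_0(Q)$ for the free equation (itself a consequence of the Pohozaev identities \eqref{PoQ} and the sharp Gagliardo–Nirenberg inequality \eqref{GI}), which is where \eqref{C_GN} and \eqref{IDbG} enter.

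Non-attainment is then almost immediate from the strictness just used: if some $\varphi_\star \ne 0$ achieved $S_V(\varphi_\star) = d_a = S_0(Q)$ with $P_V(\varphi_\star) = 0$, then the chain above would read $S_0(Q) \le S_0(\varphi_\star^{\lambda_0}) < S_0(\varphi_\star) < S_V(\varphi_\star) = S_0(Q)$, a contradiction — the strict inequality $S_0(\varphi_\star) < S_V(\varphi_\star)$ alone already suffices since $S_V(\varphi_\star) = S_0(Q) \le d_0 \le S_0(\varphi_\star)$ by minimality of $Q$ for the free problem. I expect the main technical obstacle to be the upper-bound half: making rigorous that translating $Q$ to infinity drives $\|Q(\cdot - y_n)\|_{L^2(\sqrt V\,dx)}^2 \to 0$, and then correcting the resulting small constraint violation without overshooting $S_0(Q)$. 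This requires the continuity of $H^1 \hookrightarrow L^2(\sqrt V\,dx)$ from Lemma~\ref{GHI} together with a quantitative version of "a small perturbation of a point on $\{P_0 = 0\}$ can be rescaled back onto $\{P_V = 0\}$ at $O(\epsilon)$ cost," which is a routine but slightly delicate implicit-function/continuity argument. Everything else is bookkeeping with the Pohozaev identities and subcritical scaling.
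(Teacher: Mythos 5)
The upper-bound half of your argument ($d_a\le S_0(Q)$ via translating $Q$ to infinity and correcting the small constraint violation by an $L^2$-invariant rescaling) is sound, and the paper itself offers no proof beyond citing Lemmas 3.5--3.6 of Guo--Wang--Yao, which proceed along those lines. The lower bound, however, contains a sign error that breaks the whole chain. For the $L^2$-invariant scaling $\varphi^\lambda(x)=\lambda^{3/2}\varphi(\lambda x)$ one has $\frac{d}{d\lambda}S_0(\varphi^\lambda)=\frac{1}{2\lambda}P_0(\varphi^\lambda)$, so $\lambda\mapsto S_0(\varphi^\lambda)$ attains its \emph{maximum} precisely at the $\lambda_0$ where $P_0(\varphi^{\lambda_0})=0$. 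Hence when $P_0(\varphi)<0$ (so $\lambda_0<1$) you get $S_0(\varphi^{\lambda_0})\ge S_0(\varphi)$ --- the opposite of the link $S_0(\varphi^{\lambda_0})<S_0(\varphi)$ your chain requires. The fallback you offer, namely ``$d_0\le S_0(\varphi_\star)$ by minimality of $Q$ for the free problem,'' is also false: $\varphi_\star$ satisfies $P_0(\varphi_\star)<0$, not $P_0(\varphi_\star)=0$, and $S_0$ is unbounded below on $\{P_0<0\}$ (for instance $S_0(cQ)\to-\infty$ as $c\to\infty$ while $P_0(cQ)<0$ for all $c>1$). So neither version of your lower bound closes, and the non-attainment argument inherits the same gap.

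The standard repair --- and the route taken in the cited lemmas --- is to pass to $G_V:=S_V-\tfrac{1}{6}P_V=\tfrac{1}{6}\|\nabla f\|_{L^2}^2+\tfrac{3-\mu}{6}\int_{\R^3} V|f|^2\,dx+\tfrac{1}{2}\|f\|_{L^2}^2$, which (a) coincides with $S_V$ on the constraint set $\{P_V=0\}$, (b) strictly dominates $G_0:=S_0-\tfrac{1}{6}P_0$ for $f\ne 0$ since $\mu<3$, and (c) is monotone increasing in $\lambda$ along $\varphi^\lambda$, so scaling \emph{down} to reach $\{P_0=0\}$ only decreases it. This gives $\inf\{G_0:P_0\le 0\}=\inf\{S_0:P_0=0\}=S_0(Q)$, and then for any $\varphi$ with $P_V(\varphi)=0$ one concludes $S_V(\varphi)=G_V(\varphi)>G_0(\varphi)\ge S_0(Q)$; the strict middle inequality yields non-attainment for free. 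Your pointwise observations $S_V>S_0$ and $P_V>P_0$ are correct and are exactly the input to this argument, but they must be routed through $G$ rather than through $S_0$ and the rescaling.
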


\begin{lemma}\label{GlobalS}
Fix $a>0$ and $0<\mu<2$.
Assume that $u_{0}\in H^{1}(\R^{3})$ satisfies
\begin{equation}\label{Condition11}
E_{V}(u_{0})=E_{0}(Q), \quad M(u_{0})=M(Q)
\quad \text{and}\quad 
P_{V}(u_{0})\geq 0.
\end{equation}
Then the corresponding solution $u(t)$ to \eqref{NLS} is global and satisfies 
\begin{equation}\label{PositiveP}
P_{V}(u(t))> 0 \quad \text{for all $t\in \R$}.
\end{equation}
Moreover,  we have 
\begin{align}\label{EquiNorm}
&\sup_{t\in \R}\|u(t)\|^{2}_{H^{1}_{V}}
\sim
 S_{0}(Q)\\ \label{GINequ}
&\|u(t)\|^{2}_{\dot{H}^{1}_{V}}<\|Q\|^{2}_{\dot{H}^{1}}
\quad \text{for all $t\in \R$.}
\end{align}
\end{lemma}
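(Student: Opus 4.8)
The plan is to run the standard threshold variational argument, adapted to the inverse-power potential. The first thing I would do is normalize. Combining the hypotheses \eqref{Condition11} with the definition \eqref{FS} of $S_V$, the Pohozaev identities \eqref{PoQ} (which give $E_0(Q)=M(Q)$ and $P_0(Q)=0$), and Lemma~\ref{minimumS}, one pins down the value of the conserved action:
\[
S_V(u(t))=S_V(u_0)=E_V(u_0)+\tfrac12\|u_0\|_{L^2}^2=E_0(Q)+M(Q)=2E_0(Q)=S_0(Q)=d_a
\]
for all $t$ in the maximal existence interval $I$; the same identities give $S_0(Q)=\tfrac16\|\nabla Q\|_{L^2}^2+\tfrac12\|Q\|_{L^2}^2$. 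After this, everything hinges on controlling the sign of the virial functional $P_V$ along the flow.

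Next I would establish \eqref{PositiveP} by a connectedness argument. The map $t\mapsto P_V(u(t))$ is continuous on $I$ (each term is continuous on $H^1$, using Lemma~\ref{GHI} for $\int V|u|^2$ and Sobolev embedding for $\|u\|_{L^4}^4$). If $P_V(u(t_\ast))=0$ for some $t_\ast\in I$, then $u(t_\ast)\neq 0$ because $\|u(t_\ast)\|_{L^2}^2=\|Q\|_{L^2}^2>0$, while $S_V(u(t_\ast))=d_a$; thus $u(t_\ast)$ would attain the infimum \eqref{dinf}, contradicting Lemma~\ref{minimumS}. Hence $P_V(u(t))$ never vanishes on $I$, and since $P_V(u_0)\geq 0$ it must remain strictly positive throughout $I$.

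For the coercivity step, using the explicit forms of $S_V$ and $P_V$ together with $x\cdot\nabla V=-\mu V$, a direct computation yields the pointwise identity
\[
S_V(f)-\tfrac16 P_V(f)=\tfrac16\|\nabla f\|_{L^2}^2+\tfrac{3-\mu}{6}\int_{\R^3}V(x)|f|^2\,dx+\tfrac12\|f\|_{L^2}^2 .
\]
For $0<\mu<2$ the coefficient $\tfrac{3-\mu}{6}$ lies in $[\tfrac16,\tfrac12]$, so the right-hand side dominates both $\tfrac16\|f\|_{\dot H^1_V}^2$ and $\tfrac16\|f\|_{H^1_V}^2$. Evaluating at $f=u(t)$, and using $S_V(u(t))=S_0(Q)$, $P_V(u(t))>0$, $\|u(t)\|_{L^2}^2=\|Q\|_{L^2}^2$, and the value of $S_0(Q)$ from the first step, the $\|\cdot\|_{L^2}^2$ terms cancel and one gets $\|u(t)\|_{\dot H^1_V}^2<\|\nabla Q\|_{L^2}^2=\|Q\|_{\dot H^1}^2$, which is \eqref{GINequ}; the same chain gives $\|u(t)\|_{H^1_V}^2<6S_0(Q)$, while trivially $\|u(t)\|_{H^1_V}^2\geq\|u(t)\|_{L^2}^2=\|Q\|_{L^2}^2=S_0(Q)$, which is \eqref{EquiNorm}. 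Finally, Lemma~\ref{GHI} gives $\|u(t)\|_{H^1}\sim\|u(t)\|_{H^1_V}$ uniformly in $t$, so the $H^1$ norm stays bounded on $I$, and the blow-up alternative of the local Cauchy theory forces $I=\R$; the argument is symmetric in time, yielding the global solution.

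The argument is essentially bookkeeping with the Pohozaev identities, and I do not expect a serious obstacle. The one genuinely essential input is the non-attainment of $d_a$ (Lemma~\ref{minimumS}): this is exactly what prevents $P_V(u(t))$ from touching zero, and — in contrast with the potential-free case — it also means there is no minimizer of \eqref{dinf} to analyze separately, which actually simplifies the rigidity. The only point requiring a moment's care is verifying that the weight $\tfrac{3-\mu}{6}$ behaves correctly (both as a lower bound by $\tfrac16$ and within $[\tfrac16,\tfrac12]$) across the full range $0<\mu<2$, but this is immediate.
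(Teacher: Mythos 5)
Your proposal is correct and follows essentially the same route as the paper: the value $S_V(u(t))=S_0(Q)=d_a$ from conservation and the Pohozaev identities, strict positivity of $P_V(u(t))$ via the non-attainment of $d_a$ (Lemma~\ref{minimumS}), and the coercivity bound — your identity $S_V(f)-\tfrac16P_V(f)=\tfrac16\|\nabla f\|_{L^2}^2+\tfrac{3-\mu}{6}\int V|f|^2\,dx+\tfrac12\|f\|_{L^2}^2$ is exactly the paper's inequality $\|u\|_{\dot H^1_V}^2+3\|u\|_{L^2}^2+P_V(u)\le 6S_V(u)$ after rescaling. The cancellation of the $L^2$ terms using $M(u_0)=M(Q)$ to get \eqref{GINequ}, and the resulting uniform $H^1_V$ bound giving globality, also match the paper's argument.
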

\begin{proof}
Notice that by  \eqref{Condition11} we get $S_{V}(u(t))=S_{0}(Q)$, where the functional $S_{V}$ is given by \eqref{FS}.
By contradiction, suppose that there exists $t_{0}\in \R$ so that $P_{V}(u(t_{0}))=0$. Then $u(t_{0})$ is a minimizer of $d_{a}$ (cf. \eqref{dinf}),  which is a contradiction with Lemma \ref{minimumS}. Thus, 
\begin{equation}\label{PPositive}
\text{$P_{V}(u(t))> 0$ for all $t$ in the existence time.}
\end{equation}

Next, notice that $2S_{0}(Q)\leq \|u(t)\|^{2}_{{H}^{1}_{V}}$ for all $t$ in the existence time. On the other hand, by using \eqref{PPositive} we infer that (recall that $0<\mu<2$)
\[
\|u(t)\|^{2}_{\dot{H}^{1}_{V}}+3\|u(t)\|^{2}_{L^{2}}
< \|u(t)\|^{2}_{\dot{H}^{1}_{V}}+3\|u(t)\|^{2}_{L^{2}}+P_{V}(u(t))\leq 6S_{V}(u(t))=6S_{0}(Q)
\]
for all $t$ in the existence time, which implies that $u$ is global and satisfies \eqref{EquiNorm}. 
Finally, as $6S_{0}(Q)=\|Q\|^{2}_{\dot{H}^{1}}+3\|Q\|^{2}_{L^{2}}$ and $M(Q)=M(u_{0})$, by inequality 
above we obtain \eqref{GINequ}.

This completes the proof of lemma.
\end{proof}

\begin{lemma}\label{CondiBlo}
Fix $a>0$ and $0<\mu<2$.
Assume that $u_{0}\in H^{1}(\R^{3})$ satisfies
\begin{equation}\label{Condition22}
E_{V}(u_{0})=E_{0}(Q), \quad M(u_{0})=M(Q)
\quad \text{and}\quad 
P_{V}(u_{0})< 0.
\end{equation}
Then the corresponding solution $u(t)$ to \eqref{NLS} satisfies 
$P_{V}(u(t))<0$  for all $t$ in the existence time. Furthermore,
\begin{align}\label{BelG}
	\|u(t)\|^{2}_{\dot{H}^{1}_{V}}>\|Q\|^{2}_{\dot{H}^{1}}\qtq{for all $t$ in the existence time.}
\end{align}
\end{lemma}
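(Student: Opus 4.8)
The statement to prove is Lemma~\ref{CondiBlo}: under the threshold condition \eqref{Condition22} with $P_V(u_0)<0$, the sign of the virial functional is preserved and $\|u(t)\|_{\dot H^1_V}^2>\|Q\|_{\dot H^1}^2$ throughout the lifespan. The natural approach mirrors the proof of the companion Lemma~\ref{GlobalS}, but on the ``other side'' of the variational well. First I would record that, since $E_V(u_0)=E_0(Q)$ and $M(u_0)=M(Q)$, conservation of mass and energy give $S_V(u(t))=E_V(u(t))+\tfrac12\|u(t)\|_{L^2}^2=E_0(Q)+\tfrac12\|Q\|_{L^2}^2=S_0(Q)=d_a$ for every $t$ in the existence interval, where the last two equalities use \eqref{PoQ} and Lemma~\ref{minimumS}.

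For the sign persistence: suppose toward a contradiction that $P_V(u(t_1))\ge 0$ for some $t_1$. Since $P_V(u_0)<0$ and $t\mapsto P_V(u(t))$ is continuous (the solution is in $C(I;H^1)$ and $P_V$ is continuous on $H^1$ by Hardy's inequality, Lemma~\ref{GHI}), there is an intermediate time $t_\ast$ with $P_V(u(t_\ast))=0$. But $u(t_\ast)\ne 0$ (mass is conserved and $M(Q)>0$), so $u(t_\ast)$ is admissible in the variational problem \eqref{dinf}, whence $S_V(u(t_\ast))\ge d_a$; combined with $S_V(u(t_\ast))=d_a$ this makes $u(t_\ast)$ a minimizer for $d_a$, contradicting Lemma~\ref{minimumS}. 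Hence $P_V(u(t))<0$ for all $t$ in the existence time.

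For the lower bound \eqref{BelG}: I would use the standard algebraic identity relating $S_V$, $P_V$, and the kinetic terms. Writing $K(f):=\|f\|_{\dot H^1_V}^2$ and using
\[
6S_V(f)-P_V(f)=\bigl(6-2-\mu\bigr)\|f\|_{\dot H^1_V}^2-\mu\,a\!\int\!\tfrac{|f|^2}{|x|^\mu}dx+3\|f\|_{L^2}^2\cdot 2\cdot\tfrac12\cdots
\]
— more carefully, combining the expressions for $S_V$ in \eqref{FS} and $P_V$ so that the $\|f\|_{L^4}^4$ terms cancel — one obtains a relation of the form $\|u(t)\|_{\dot H^1_V}^2 = \tfrac{?}{?}\bigl(6S_0(Q)-3\|Q\|_{L^2}^2\bigr) - (\text{nonnegative multiple of }P_V(u(t))) - (\text{nonnegative multiple of the potential term})$; since $P_V(u(t))<0$ the second correction is positive, pushing $\|u(t)\|_{\dot H^1_V}^2$ above the value $\|Q\|_{\dot H^1}^2$ that corresponds to equality. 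Alternatively, and perhaps more cleanly, I would invoke the sharp Gagliardo--Nirenberg inequality \eqref{GI} together with $K(u(t))\ge\|\nabla u(t)\|_{L^2}^2$ (as $V\ge 0$) to write $E_V(u(t))M(u(t))$ as a function of $y:=\|\nabla u(t)\|_{L^2}^2\|u(t)\|_{L^2}^2$ bounded below by $\tfrac12 y - \tfrac{C_{GN}}{4}y^{3/2}\cdot(\text{const})$; the threshold value $E_0(Q)M(Q)$ is exactly the maximum of this one-variable function, attained at $y=\|\nabla Q\|_{L^2}^2\|Q\|_{L^2}^2$, and the condition $P_V(u(t))<0$ forces $y$ to lie on the super-critical branch, yielding \eqref{BelG} after dividing by $M(Q)$.

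The main obstacle is purely bookkeeping: getting the coefficients in the $P_V$-versus-$S_V$ identity exactly right in the presence of the inhomogeneous potential term $\mu a\int|x|^{-\mu}|u|^2$, and making sure its nonnegativity (for $a>0$, $\mu>0$) is used in the correct direction so that the strict inequality in \eqref{BelG} survives. There is no deep new idea beyond the variational characterization $d_a=S_0(Q)$ and its non-attainment (Lemma~\ref{minimumS}); the continuity/intermediate-value argument for the sign of $P_V$ is routine, and I would present the kinetic lower bound via whichever of the two computations above produces the cleanest constants.
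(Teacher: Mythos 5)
Your sign-persistence argument (continuity of $t\mapsto P_V(u(t))$, the identity $S_V(u(t))=S_0(Q)=d_a$, and the non-attainment of $d_a$ from Lemma~\ref{minimumS}) is exactly the paper's argument, which simply reuses the proof of Lemma~\ref{GlobalS}. For \eqref{BelG}, of the two routes you sketch, the second one is the paper's: assuming $\|u(t_0)\|_{\dot H^1_V}^2\le\|Q\|_{\dot H^1}^2$, the sharp Gagliardo--Nirenberg inequality \eqref{GI} together with $M(u)=M(Q)$, $V\ge0$ and the Pohozaev identities \eqref{PoQ} (which give $\tfrac{3C_{GN}}{4}\|\nabla Q\|_{L^2}\|Q\|_{L^2}=1$) forces $P_V(u(t_0))\ge\tfrac{\mu}{2}\int V|u(t_0)|^2\,dx>0$, contradicting the first part; your ``supercritical branch'' phrasing is the same computation read forwards rather than by contradiction, so that route is fine. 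Your \emph{first} route, however, does not work: no linear combination $\alpha S_V+\beta P_V$ can both cancel the $\|f\|_{L^4}^4$ term and keep equal coefficients on $\|\nabla f\|_{L^2}^2$ and $\int V|f|^2$ unless $\mu=2$ (the $L^4$-cancelling combination is $6S_V-P_V=\|\nabla f\|_{L^2}^2+(3-\mu)\int V|f|^2+3\|f\|_{L^2}^2$), and since $3-\mu>1$ the leftover potential term sits on the wrong side of the inequality: one only gets $\|\nabla u\|_{L^2}^2+(3-\mu)\int V|u|^2>\|Q\|_{\dot H^1}^2$, which does not imply $\|u\|_{\dot H^1_V}^2>\|Q\|_{\dot H^1}^2$. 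So you should drop the algebraic identity and commit to the Gagliardo--Nirenberg argument, making explicit how $P_V(u(t))<0$ yields $\|\nabla u(t)\|_{L^2}\|u(t)\|_{L^2}>\|\nabla Q\|_{L^2}\|Q\|_{L^2}$ rather than merely asserting it.
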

\begin{proof}
Following the same argument as Lemma~\ref{GlobalS}, we can prove $P_{V}(u(t))<0$  for all $t$ in the existence time.

Next, suppose that $\|u(t_{0})\|^{2}_{\dot{H}^{1}_{V}}\leq \|Q\|^{2}_{\dot{H}^{1}}$ for some $t_{0}$. Then, as $M(u(t_{0}))=M(Q)$,
by the Gagliardo-Nirenberg inequality and \eqref{GI}-\eqref{PoQ} we obtain
\begin{align*}
	\tfrac{1}{2}P_{V}(u(t_{0}))&\geq 
	\| \nabla u(t_{0})\|^{2}_{L^{2}}+
	\tfrac{\mu}{2} \int_{\R^{3}}\tfrac{a}{|x|^{\mu}}|u(x, t_{0})|^{2}dx-
	\tfrac{3 C_{GN}}{4} \| \nabla u(t_{0})\|^{3}_{L^{2}}\| u(t_{0})\|_{L^{2}}\\
	&=\| \nabla u(t_{0})\|^{2}_{L^{2}}\(1-\tfrac{3 C_{GN}}{4} \| \nabla u(t_{0})\|_{L^{2}}\| u(t_{0})\|_{L^{2}}\)
	+\tfrac{\mu}{2} \int_{\R^{3}}\tfrac{a}{|x|^{\mu}}|u(x, t_{0})|^{2}dx\\
	&\geq\| \nabla u(t_{0})\|^{2}_{L^{2}}\(1-\tfrac{3 C_{GN}}{4} \| \nabla Q\|_{L^{2}}\| Q\|_{L^{2}}\)+
	\tfrac{\mu}{2} \int_{\R^{3}}\tfrac{a}{|x|^{\mu}}|u(x, t_{0})|^{2}dx\\
	&=\tfrac{\mu}{2} \int_{\R^{3}}\tfrac{a}{|x|^{\mu}}|u(x, t_{0})|^{2}dx>0,
\end{align*}
which is a contradiction. This proves the lemma. 
\end{proof}

\subsection{Virial identities}
\begin{equation}\label{WRR}
w_{R}(x)=R^{2}\phi\(\tfrac{x}{R}\)
\quad \text{and}\quad w_{\infty}(x)=|x|^{2},
\end{equation}
where $\phi$ is a real-valued and radial function so that
\[
\phi(x)=
\begin{cases}
|x|^{2},& \quad |x|\leq 1\\
0,& \quad |x|\geq 2,
\end{cases}
\quad \text{with}\quad 
|\partial^{\alpha}\phi(x)|\lesssim |x|^{2-|\alpha|}.
\]
We introduce the localized virial functional
\[
I_{R}[u]=2\IM\int_{\R^{3}} \nabla w_{R}(x) \cdot\nabla u(t,x) \overline{u(t,x)} dx.
\]

We need the following lemma; see e.g., \cite{GuoWangYao2018}.

\begin{lemma}\label{VirialIden}
Let $R\in [1, \infty]$. Suppose $u(t)$ solves \eqref{NLS}. Then we have
\begin{equation}\label{LocalVirial}
\frac{d}{d t}I_{R}[u]=F_{R, V}[u(t)],
\end{equation}
where
\begin{align*}
F_{R, V}[u]&:=\int_{\R^{3}}(- \Delta \Delta w_{R})|u|^{2}-
\Delta[w_{R}(x)]|u|^{4}+4\RE \overline{u_{j}} u_{k} \partial_{jk}[w_{R}]dx\\
&-2\int_{\R^{3}}|u|^{2}\nabla w_{R}\cdot \nabla V dx\\
&=F_{R,0}[u]-2\int_{\R^{3}}|u|^{2}\nabla w_{R}\cdot \nabla V dx.
\end{align*}
In particular, when $R=\infty$ we have $F_{\infty, V}[u]=4P_{V}(u)$.
\end{lemma}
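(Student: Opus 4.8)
\textbf{Proof proposal for Lemma~\ref{VirialIden}.}

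The plan is to compute $\frac{d}{dt}I_R[u]$ directly from the equation \eqref{NLS}, following the standard virial computation but keeping track of the potential term. First I would recall the general identity: for a sufficiently smooth real weight $w$ and $u$ solving $i\partial_t u = Hu - |u|^2 u = -\Delta u + Vu - |u|^2 u$, one has
\[
\frac{d}{dt}\left(2\IM\int_{\R^3}\nabla w\cdot\nabla u\,\overline{u}\,dx\right)
=\int_{\R^3}(-\Delta\Delta w)|u|^2 + 4\RE\,\overline{u_j}u_k\,\partial_{jk}w - \Delta w\,|u|^4\,dx - 2\int_{\R^3}|u|^2\,\nabla w\cdot\nabla V\,dx.
\]
To derive this I would differentiate under the integral sign, substitute $\partial_t u$ and $\partial_t\overline{u}$ from the equation, and integrate by parts. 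The Laplacian part of $H$ contributes the three terms $-\Delta\Delta w$, $4\RE\,\overline{u_j}u_k\partial_{jk}w$, and the part of $\Delta w\,|u|^4$ coming from the nonlinearity (this is exactly the free virial computation giving $F_{R,0}$); the multiplication operator $V$ contributes the commutator term, which after integration by parts against $\nabla w$ yields $-2\int|u|^2\nabla w\cdot\nabla V\,dx$. The integrations by parts are justified for $R<\infty$ because $w_R$ is compactly supported with bounded derivatives up to order four, and $u\in H^1$; for $R=\infty$ one argues by the a priori assumption that the relevant quantities are finite (or approximates $w_\infty$ by $w_R$ and passes to the limit), which is the content already used in \cite{GuoWangYao2018}.

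For the special case $R=\infty$, I would substitute $w_\infty(x)=|x|^2$, so that $\partial_{jk}w_\infty = 2\delta_{jk}$, $\Delta w_\infty = 6$, and $\Delta\Delta w_\infty = 0$. Then the first term vanishes, $4\RE\,\overline{u_j}u_k\,\partial_{jk}w_\infty = 8|\nabla u|^2$, and $-\Delta w_\infty\,|u|^4 = -6|u|^4$, so $F_{\infty,0}[u] = 8\|\nabla u\|_{L^2}^2 - 6\|u\|_{L^4}^4 = 4P_0(u)$. For the potential term, $\nabla w_\infty = 2x$, hence $-2\int|u|^2\,2x\cdot\nabla V\,dx = -4\int|u|^2(x\cdot\nabla V)\,dx = 4\mu\int\frac{a}{|x|^\mu}|u|^2\,dx$ since $x\cdot\nabla V = -\mu V = -\mu a|x|^{-\mu}$. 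Adding these gives
\[
F_{\infty,V}[u] = 8\|\nabla u\|_{L^2}^2 + 4\mu\int_{\R^3}\tfrac{a}{|x|^\mu}|u|^2\,dx - 6\|u\|_{L^4}^4 = 4P_V(u),
\]
matching the definition of $P_V$ in the introduction.

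The main obstacle is not the algebra but the justification of the formal manipulations: differentiating the virial functional under the integral and performing the integrations by parts requires either enough regularity of $u$ or a careful approximation argument, and for $R=\infty$ the weight $|x|^2$ is unbounded, so $I_\infty[u]$ and its derivative need not be finite for general $H^1$ data. Since the lemma is quoted from \cite{GuoWangYao2018}, I would handle this by first establishing the identity for $R<\infty$ (where everything is rigorous by density of Schwartz functions and the local well-posedness theory in $H^1$), and then, in the applications where $R=\infty$ is used, invoking the additional hypothesis $xu_0\in L^2$ or radial symmetry that guarantees finiteness — exactly the setting of Theorem~\ref{Th2}(ii). Thus the proof reduces to the explicit computation above plus a reference to the standard approximation argument.
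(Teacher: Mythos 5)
Your proposal is correct and is exactly the standard localized virial computation that the paper itself does not reproduce but simply cites from \cite{GuoWangYao2018}; your algebra for $R=\infty$ (using $\Delta\Delta|x|^2=0$, $\partial_{jk}|x|^2=2\delta_{jk}$, $\Delta|x|^2=6$, and $x\cdot\nabla V=-\mu V$) correctly recovers $F_{\infty,V}[u]=4P_V(u)$ with the paper's normalization of $P_V$. Your remark about justifying the $R=\infty$ case via finite variance or approximation by $w_R$ is also consistent with how the identity is actually used in Section~\ref{Sec22}.
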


The proofs of the next two lemmas are very similar to the ones in \cite[Lemmas 2.9 and 2.10]{MiaMurphyZheng2021}.

\begin{lemma}[Lemmas 2.9 in \cite{MiaMurphyZheng2021}]\label{Virialzero}
Consider $R\in [1, \infty]$, $\theta\in \R$ and $y\in\R$. Then we have
\[
F_{R,0}[e^{i\theta}Q(\cdot-y)]=0.
\]
\end{lemma}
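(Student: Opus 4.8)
The plan is to observe that, although $e^{i\theta}Q(\cdot-y)$ is not itself a solution of \eqref{NLS} (the potential is in the way), the function $v(t,x):=e^{it}Q(x-y)$ is an \emph{exact} standing-wave solution of the \emph{free} focusing cubic NLS, that $e^{i\theta}Q(\cdot-y)=v(\theta,\cdot)$, and that the localized virial functional $I_R$ evaluated along $v$ vanishes identically in time; differentiating this in $t$ and invoking the virial identity then forces $F_{R,0}[v(t)]\equiv 0$, which is exactly the assertion.

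In detail: by the ground-state equation \eqref{ELS} and translation invariance, $v(t,x)=e^{it}Q(x-y)$ solves $i\partial_t v+\Delta v+|v|^2v=0$, that is, \eqref{NLS} with $a=0$; in this case $V\equiv 0$, so $F_{R,V}=F_{R,0}$ and Lemma~\ref{VirialIden} gives $\tfrac{d}{dt}I_R[v(t)]=F_{R,0}[v(t)]$ for every $t\in\R$ and every $R\in[1,\infty]$. On the other hand, since $Q$ is real-valued and $w_R$ is real, the integrand defining $I_R[v(t)]$ is $(\nabla w_R\cdot\nabla v)\,\overline v=(\nabla w_R\cdot\nabla Q(\cdot-y))\,Q(\cdot-y)=\tfrac12\,\nabla w_R\cdot\nabla\big(Q(\cdot-y)^2\big)$, which is real; hence $I_R[v(t)]\equiv 0$.

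Combining the two, $F_{R,0}[v(t)]=\tfrac{d}{dt}I_R[v(t)]\equiv 0$, and evaluating at $t=\theta$ gives $F_{R,0}[e^{i\theta}Q(\cdot-y)]=F_{R,0}[v(\theta,\cdot)]=0$. The only step requiring a word of justification is that Lemma~\ref{VirialIden} is phrased for \eqref{NLS} with the potential present, whereas we used its $a=0$ (equivalently $V\equiv 0$) specialization for the free equation; but the derivation of \eqref{LocalVirial} is insensitive to this and applies verbatim there, and all the integrals converge absolutely because $Q$ decays exponentially, including when $R=\infty$ and $w_\infty=|x|^2$ grows. (As a consistency check, for $R=\infty$ the identity reduces to $4P_0(Q(\cdot-y))=4P_0(Q)=0$, which also follows directly from the Pohozaev identities \eqref{PoQ}.) There is no genuine obstacle here: once one notices that every factor in the integrand of $I_R[e^{it}Q(\cdot-y)]$ is real, the argument is essentially immediate.
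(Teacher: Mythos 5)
Your argument is correct and coincides with the one the paper points to (the proof of Lemma 2.9 in \cite{MiaMurphyZheng2021}, which the paper cites rather than reproduces): there too one observes that $e^{it}Q(\cdot-y)$ solves the potential-free equation, that $I_R[e^{it}Q(\cdot-y)]\equiv 0$ because $Q$ is real, and then differentiates via the $V\equiv 0$ virial identity. Your remarks on phase invariance, exponential decay of $Q$ handling the $R=\infty$ case, and the consistency check $F_{\infty,0}[Q]=4P_0(Q)=0$ are all accurate.
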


\begin{lemma}[Lemmas 2.10 in \cite{MiaMurphyZheng2021}]\label{VirialModulate}
Let $R\in [1, \infty]$, $\chi: I\to \R$, $\theta: I\to \R$,
$y: I\to \R$. Then if $u$ is a solution  to \eqref{NLS} on an interval $I$ we have
\begin{align}\nonumber
	\frac{d}{d t}I_{R}[u]&=F_{\infty,0}[u(t)]\\ \label{Modu11}
	                     &+F_{R, V}[u(t)]-F_{\infty,0}[u(t)]\\\label{Modu22}
											 &-\chi(t)\big\{F_{R,0}[e^{i\theta(t)}Q(\cdot-y(t))]-F_{\infty,0}[e^{i\theta(t)}Q(\cdot-y(t))]\big\},
\end{align}
for all $t\in\R$.
\end{lemma}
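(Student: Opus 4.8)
The statement to prove is Lemma~\ref{VirialModulate}, which is a purely algebraic identity. The plan is as follows.

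First I would start from the localized virial identity of Lemma~\ref{VirialIden}, which asserts that if $u$ solves \eqref{NLS} on $I$ then $\frac{d}{dt}I_{R}[u] = F_{R,V}[u(t)]$ for each fixed $R\in[1,\infty]$. The entire content of the lemma is then a trivial rewriting of the right-hand side $F_{R,V}[u(t)]$ by adding and subtracting two quantities: first $F_{\infty,0}[u(t)]$, and second the $\chi(t)$-weighted combination $\chi(t)\{F_{R,0}[e^{i\theta(t)}Q(\cdot-y(t))] - F_{\infty,0}[e^{i\theta(t)}Q(\cdot-y(t))]\}$. So I would write
\[
F_{R,V}[u(t)] = F_{\infty,0}[u(t)] + \big(F_{R,V}[u(t)] - F_{\infty,0}[u(t)]\big) - \chi(t)\big\{F_{R,0}[e^{i\theta(t)}Q(\cdot-y(t))] - F_{\infty,0}[e^{i\theta(t)}Q(\cdot-y(t))]\big\},
\]
which lines up exactly with the three lines \eqref{Modu11}, \eqref{Modu22}, and the last displayed line.

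The one genuine observation — the reason the third term is allowed to be inserted "for free" — is that the subtracted bracket vanishes identically: by Lemma~\ref{Virialzero} we have $F_{R,0}[e^{i\theta}Q(\cdot-y)]=0$ for every $R\in[1,\infty]$, in particular both for the given finite $R$ and for $R=\infty$, so $F_{R,0}[e^{i\theta(t)}Q(\cdot-y(t))] - F_{\infty,0}[e^{i\theta(t)}Q(\cdot-y(t))] = 0 - 0 = 0$ for each $t$, regardless of the values of the modulation functions $\chi(t)$, $\theta(t)$, $y(t)$. Hence adding $-\chi(t)$ times this bracket changes nothing, and the identity holds. I would also remark that $F_{\infty,0}[u]=4P_{0}(u)$ (the $V=0$ case of the last sentence of Lemma~\ref{VirialIden}), which is why this particular decomposition is the useful one: line \eqref{Modu11} is the "main term" $4P_{0}[u(t)]$, while lines \eqref{Modu22} and the last line are error terms that one expects to be small after an appropriate choice of modulation parameters and large $R$.

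There is essentially no obstacle here; the proof is two lines once Lemmas~\ref{VirialIden} and \ref{Virialzero} are in hand. The only thing to be slightly careful about is that Lemma~\ref{VirialIden} is being applied with a fixed finite $R$ (the hypothesis $R\in[1,\infty]$ in Lemma~\ref{VirialModulate} already allows this), and that the functions $\chi,\theta,y$ are merely arbitrary functions on $I$ with no regularity or smallness assumed — which is fine precisely because they multiply a quantity that is identically zero. So the write-up is: invoke Lemma~\ref{VirialIden}, add and subtract the two terms, invoke Lemma~\ref{Virialzero} to kill the last bracket, and conclude.
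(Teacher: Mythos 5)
Your proof is correct and is exactly the intended argument: the paper does not write out a proof (it defers to Lemma 2.10 of \cite{MiaMurphyZheng2021}), but the identity is precisely the add-and-subtract rewriting of Lemma~\ref{VirialIden} you describe, with the final bracket vanishing by Lemma~\ref{Virialzero}. Nothing is missing.
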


We need the following Cauchy-Schwarz inequality; a similar inequality is obtained in \cite[Claim 5.4]{DuyckaertsRou2010}; see also \cite[Lemma 2.4]{TInu2022} and \cite[Lemma 2.2]{Ardila2022}.
\begin{lemma}\label{OtherGN}Fix $a>0$.
Let $f\in H^{1}(\R^{3})$ such that $|x|f\in L^{2}(\R^{3})$. If
\begin{equation}\label{OAss1}
M(f)=M(Q)\quad \text{and}\quad  E_{V}(f)=E_{0}(Q),
\end{equation}
then
\[
\(\IM \int_{\R^{3}} (x\cdot \nabla f) \overline{f} dx\)^{2}
\lesssim
|\delta(f)|^{2}
\int_{\R^{3}}|x|^{2}|f|^{2}dx.
\]
\end{lemma}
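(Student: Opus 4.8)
The plan is to exploit the constraints $M(f)=M(Q)$ and $E_V(f)=E_0(Q)$ to bound the momentum-type quantity $\IM\int (x\cdot\nabla f)\overline f\,dx$ by a product of $|\delta(f)|$ and $\||x|f\|_{L^2}$, following the scheme of \cite[Claim 5.4]{DuyckaertsRou2010}. The natural device is to introduce the one-parameter family of rescaled/dilated functions $f_\lambda(x):=e^{\lambda\theta}f(e^{\theta(\lambda)}x)$ adapted to the pseudo-conformal / dilation symmetry, or more directly the family $g(s,x):=e^{is|x|^2/(4\cdot\,\cdot)}f(x)$ coming from the pseudo-conformal transformation, and compute $\partial_s$ at $s=0$. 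Concretely, I would set $v(x)=e^{-i\beta|x|^2}f(x)$ for a real parameter $\beta$ to be chosen, and observe that $\|v\|_{L^2}=\|f\|_{L^2}$, $\|v\|_{L^4}=\|f\|_{L^4}$, and
\[
\|\nabla v\|_{L^2}^2=\|\nabla f\|_{L^2}^2+4\beta\,\IM\int_{\R^3}(x\cdot\nabla f)\overline f\,dx+4\beta^2\int_{\R^3}|x|^2|f|^2\,dx,
\]
so that $\|v\|_{\dot H^1_V}^2$ is a quadratic polynomial in $\beta$ with leading coefficient $4\||x|f\|_{L^2}^2$ and linear coefficient $4\,\IM\int(x\cdot\nabla f)\overline f$.

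Next I would feed this $v$ into the variational machinery. Since $M(v)=M(Q)$, the sharp Gagliardo--Nirenberg inequality \eqref{GI} together with the Pohozaev identities \eqref{PoQ} and $V\ge 0$ gives a lower bound for $E_V(v)$ (equivalently for $S_V(v)$) in terms of $\|v\|_{\dot H^1_V}^2$; the key point is that the function $\Phi(y)=\tfrac12 y-\tfrac14 C_{GN}\|Q\|_{L^2}\,y^{3/2}\cdot(\text{stuff})$ — written cleanly, the map $y\mapsto \tfrac12 y - (\text{sharp GN constant})\, y^{3/2}$ with the mass fixed at $M(Q)$ — attains its maximum value $E_0(Q)$ exactly at $y=\|Q\|_{\dot H^1}^2$, with a nondegenerate (quadratic) maximum there. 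Hence for $y$ near $\|Q\|_{\dot H^1}^2$ one has
\[
E_0(Q)-\big[\tfrac12 y-(\text{GN})\,y^{3/2}\big]\gtrsim (y-\|Q\|_{\dot H^1}^2)^2=\delta(\,\cdot\,)^2\cdot(\text{with }y=\|v\|_{\dot H^1_V}^2).
\]
Combined with $E_V(v)\ge \tfrac12\|v\|_{\dot H^1_V}^2-(\text{GN})\|v\|_{\dot H^1_V}^3$ (dropping the nonnegative potential in the GN estimate of $\|v\|_{L^4}^4$) and the constraint side $E_V(v)=E_V(f)=E_0(Q)$, we get $0=E_0(Q)-E_V(v)\ge c\,\delta(v)^2 - (\text{error from }V)$. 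One then needs $\delta(v)$, i.e. $\|Q\|_{\dot H^1}^2-\|v\|_{\dot H^1_V}^2$, to be small — which holds automatically when $\delta(f)$ is small and $\beta$ is chosen of size $\sim\delta(f)/\||x|f\|_{L^2}$ — and this forces a bound of the form $\delta(v)^2\lesssim 0$, i.e. a near-equality that pins $\|v\|_{\dot H^1_V}^2$ close to $\|Q\|_{\dot H^1}^2$ up to an error controlled by the potential term; the upshot after optimizing over $\beta$ is the claimed inequality. The cleanest route is: choose $\beta = -\,\dfrac{\IM\int(x\cdot\nabla f)\overline f\,dx}{4\||x|f\|_{L^2}^2}$ so as to minimize $\|v\|_{\dot H^1_V}^2$, giving $\|v\|_{\dot H^1_V}^2 = \|f\|_{\dot H^1_V}^2 - \dfrac{(\IM\int(x\cdot\nabla f)\overline f)^2}{\||x|f\|_{L^2}^2}$; plug into the variational lower bound for $E_V(v)=E_0(Q)$, use $\delta(f)=\|Q\|_{\dot H^1}^2-\|f\|_{\dot H^1_V}^2$, and read off
\[
\Big(\IM\int_{\R^3}(x\cdot\nabla f)\overline f\,dx\Big)^2 \lesssim |\delta(f)|\,\||x|f\|_{L^2}^2 \quad\text{or}\quad \lesssim |\delta(f)|^2\,\||x|f\|_{L^2}^2,
\]
depending on how one handles the two cases $\delta(f)\ge 0$ and $\delta(f)<0$; in the sign regime relevant to the statement the quadratic behavior of $\Phi$ at its max yields the $|\delta(f)|^2$ power.

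The main obstacle I anticipate is handling the potential term $\int V|v|^2$ cleanly: since $V\ge 0$, dropping it only helps in the GN lower bound for $E_V(v)$, but it sits inside $\|v\|_{\dot H^1_V}^2$ and hence inside $\delta(v)$, so one must make sure the ``energy $=E_0(Q)$'' constraint combined with the variational inequality really isolates the $\dot H^1$-part behavior of $v$ rather than being swamped by the (potentially large, since $V$ is merely $|x|^{-\mu}$ and $|x|f\in L^2$ does not control $\int V|x|^2|f|^2$ well) cross terms. The resolution should be that because $1<\mu<2$, Hardy's inequality (Lemma~\ref{GHI}) bounds $\int V|v|^2 = \int V|f|^2 \lesssim \|f\|_{H^1}^2$ independently of $\beta$, so the potential contributes only a fixed finite quantity and does not interfere with the $\beta$-optimization; the quadratic-in-$\beta$ structure of $\|v\|_{\dot H^1_V}^2$ is unaffected by $V$. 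A secondary technical point is justifying that $v=e^{-i\beta|x|^2}f\in H^1$ with the stated integrability — this uses $|x|f\in L^2$ together with $f\in H^1$ — and that the Pohozaev-type identities and GN inequality apply verbatim to $v$; these are routine. I would also double-check the precise power of $|\delta(f)|$ by comparing with \cite[Lemma 2.4]{TInu2022} and \cite[Lemma 2.2]{Ardila2022}, where the same estimate is carried out in closely related settings.
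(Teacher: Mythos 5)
Your opening move --- multiplying $f$ by a quadratic phase so that the $L^2$ and $L^4$ norms are unchanged while the kinetic energy becomes a quadratic polynomial in the phase parameter, with leading coefficient $4\||x|f\|_{L^2}^2$ and linear coefficient proportional to $B:=\IM\int_{\R^3}(x\cdot\nabla f)\overline f\,dx$ --- is exactly the paper's. But the way you close the argument contains a genuine error: the identity $E_V(v)=E_V(f)$ is false. Your own displayed formula for $\|\nabla v\|_{L^2}^2$ shows that the kinetic energy is shifted by $4\beta^2\||x|f\|_{L^2}^2\pm 4\beta B$ while $\|v\|_{L^2}$, $\|v\|_{L^4}$ and $\int V|v|^2$ are unchanged; that shift is the entire point of the construction, and at your optimizing choice of $\beta$ one has $E_V(v)=E_0(Q)-B^2/\bigl(2\||x|f\|_{L^2}^2\bigr)$, not $E_0(Q)$. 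You rely on the false identity twice (``the constraint side $E_V(v)=E_V(f)=E_0(Q)$'' and ``plug into the variational lower bound for $E_V(v)=E_0(Q)$''), so the argument as written does not go through. Even after repairing this, your route through the function $\Phi(y)=\tfrac12 y-(\mathrm{GN})\,y^{3/2}$ produces $B^2/A\lesssim\bigl(\delta(f)+B^2/A\bigr)^2$ with $A=\||x|f\|_{L^2}^2$, because $\|v\|_{\dot H^1_V}^2=\|Q\|_{\dot H^1}^2-\delta(f)-B^2/A$ carries the extra term $B^2/A$ inside the square; one then still needs an absorption step (and an a priori smallness or largeness dichotomy for $B^2/A$) to extract $B^2/A\lesssim|\delta(f)|^2$. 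Your closing hedge between the powers $|\delta(f)|$ and $|\delta(f)|^2$ reflects that this part is not actually pinned down.

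The paper avoids all of this by never passing through the energy of $v$: it applies the sharp Gagliardo--Nirenberg inequality \eqref{GI} directly to $e^{i\lambda|x|^2}f$, enlarging $\|\nabla(e^{i\lambda|x|^2}f)\|_{L^2}$ to $\|e^{i\lambda|x|^2}f\|_{\dot H^1_V}$ (legitimate since $V\geq 0$), so that the quadratic polynomial
\[
4\lambda^2\||x|f\|_{L^2}^2+4\lambda\,\IM\int_{\R^3}(x\cdot\nabla f)\overline f\,dx+\|f\|_{\dot H^1_V}^2-\Bigl(\tfrac{\|f\|_{L^4}^4}{C_{GN}\|f\|_{L^2}}\Bigr)^{\frac23}
\]
is nonnegative for every $\lambda\in\R$; the discriminant condition then gives $B^2\leq A\bigl(\|f\|_{\dot H^1_V}^2-(\|f\|_{L^4}^4/(C_{GN}\|f\|_{L^2}))^{2/3}\bigr)$ with no self-referential term. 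The constraints \eqref{OAss1} give $\|f\|_{L^4}^4=\|Q\|_{L^4}^4-2\delta(f)$ and $\|f\|_{\dot H^1_V}^2=\|Q\|_{\dot H^1}^2-\delta(f)$, and a Taylor expansion of $x\mapsto x^{2/3}$ at $\|Q\|_{L^4}^4$ combined with \eqref{IDbG} shows that the zeroth- and first-order terms cancel exactly, leaving $O(|\delta(f)|^2)$. That first-order cancellation is the same degeneracy you describe as the nondegenerate maximum of $\Phi$, so your mechanism for the power $|\delta|^2$ is the right one; it is the bookkeeping around $E_V(v)$ and the resulting absorption issue that break your version.
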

\begin{proof}
Given $f\in H^{1}(\R^{3})$ and $\lambda\in \R$ we see that (cf. \eqref{GI}) 
\[
\|f\|_{L^{4}}^{4}\leq C_{GN}\|e^{i\lambda |x|^{2}}f\|_{\dot{H}^{1}_{V}}^{3}\| f\|_{L^{2}}.
\]
As
\[
\|e^{i\lambda |x|^{2}}f\|^{2}_{\dot{H}^{1}_{V}}
=4\lambda^{2}\int_{\R^{3}}|x|^{2}|f|^{2}dx+4\lambda\IM\int_{\R^{3}} (x\cdot \nabla f) \overline{f}dx+
\int_{\R^{3}}|\nabla f|^{2}dx+\int_{\R^{3}}V(x)|f|^{2}dx
\]
we get
\[
\begin{split}
4\lambda^{2}\int_{\R^{3}}|x|^{2}|f|^{2}dx+4\lambda\IM\int_{\R^{3}} (x\cdot \nabla f) \overline{f}dx+
\int_{\R^{3}}|\nabla f|^{2}dx\\
+\int_{\R^{3}}V(x)|f|^{2}dx
- \( \tfrac{\|f\|_{L^{4}}^{4}}{C_{GN}\| f\|_{L^{2}}}\)^{\frac{2}{3}}
\geq 0.
\end{split}
\]
Since the left-hand side of inequality above is a quadratic polynomial in $\lambda$, it follows that the discriminant of this polynomial is 
non-positive, which implies
\begin{equation}\label{BoundI}
\(\IM \int_{\R^{3}} (x\cdot \nabla f) \overline{f} dx\)^{2}\leq
\int_{\R^{3}}|x|^{2}|f|^{2}dx
\(\|f\|_{\dot{H}^{1}_{V}}^{2}-\( \tfrac{\|f\|_{L^{4}}^{4}}{C_{GN}\| f\|_{L^{2}}}\)^{\frac{2}{3}}\).
\end{equation}

Next, by using the fact that $E_{V}(f)=E_{0}(Q)$ (cf. \eqref{OAss1}), it is clear that $\|f\|^{4}_{L^{4}}=\|Q\|^{4}_{L^{4}}-2\delta(f)$.
But then, since $M(f)=M(Q)$, it follows 
\begin{align*}
	\|f\|_{\dot{H}^{1}_{V}}^{2}-\( \tfrac{\|f\|_{L^{4}}^{4}}{C_{GN}\| f\|_{L^{2}}}\)^{\frac{2}{3}}
	=\|Q\|_{\dot{H}^{1}}^{2}-\delta(f)-\(\tfrac{\|Q\|^{4}_{L^{4}}-2\delta(f)}{C_{GN}\|Q\|_{L^{2}}}\)^{\frac{2}{3}}.
	\end{align*}
On the other hand, Taylor expansion and \eqref{IDbG} implies 
\begin{align*}
\(\tfrac{\|Q\|^{4}_{L^{4}}-2\delta(f)}{C_{GN}\|Q\|_{L^{2}}}\)^{\frac{2}{3}}
&=\(\|Q\|^{\frac{8}{3}}_{L^{4}}-\tfrac{4}{3}\|Q\|^{-\frac{4}{3}}_{L^{4}}\delta(f)+O(|\delta(f)|^{2})\)
[C_{GN} \|Q\|_{L^{2}}]^{-\frac{2}{3}}\\
&=\|Q\|_{\dot{H}^{1}}^{2}-\delta(f)+O(|\delta(f)|^{2}).
\end{align*}
Thus, combining identities above we obtain
\[
\|f\|_{\dot{H}^{1}_{V}}^{2}-\( \tfrac{\|f\|_{L^{4}}^{4}}{C_{GN}\| f\|_{L^{2}}}\)^{\frac{2}{3}}
=O(|\delta(f)|^{2}),
\]
hence, by \eqref{BoundI} we obtain 
\[
\(\IM \int_{\R^{3}} x\cdot \nabla f \overline{f} dx\)^{2}
\leq C
|\delta(f)|^{2}
\int_{\R^{3}}|x|^{2}|f|^{2}dx.
\]
This completes the proof of lemma.
\end{proof}

\section{Compactness properties}\label{S:Compactness}

\begin{proposition}\label{Criticalsolution}Fix $a>0$ and $1<\mu<2$.
Suppose Theorem \ref{Th2}~(i) fails for some $a>0$. Then we can find a forward global 
solution $u\in C([0, \infty); H^{1}(\R^{3}))$ to \eqref{NLS}  which satisfies
\begin{align}\label{Su}
	&E_{V}(u_{0})=E_{0}(Q), \quad M(u_{0})=M(Q)
\quad \text{and}\quad 
P_{V}(u_{0})\geq 0,\\ \label{Blowup}
	&	\|u \|_{L_{t,x}^{5}([0, \infty)\times\R^{3})}=\infty,
\end{align}
and there exists a  function $x_{0}:[0, \infty) \to \R$ so that
$\left\{u(t, \cdot+x_{0}(t)):t\in [0, \infty)\right\}$ is pre-compact in $H^{1}(\R^{3})$.
\end{proposition}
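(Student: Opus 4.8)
The plan is a concentration--compactness extraction of a minimal non-scattering (``critical'') solution, in the spirit of Kenig--Merle, carried out at the threshold level $d_{a}=S_{0}(Q)$ and adapted to the potential. \textbf{Step 1.} Suppose Theorem~\ref{Th2}~(i) fails. By Lemma~\ref{GlobalS}, every solution with data obeying \eqref{Su} is global, so failure is a failure of the spacetime bound: after the standard normalization of mass and energy there is a global $v\in C([0,\infty);H^{1})$ with $E_{V}(v(0))=E_{0}(Q)$, $M(v(0))=M(Q)$, $P_{V}(v(0))\ge0$ and $\|v\|_{L^{5}_{t,x}([0,\infty)\times\R^{3})}=\infty$ (backward-in-time failure is symmetric). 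Since $v\in L^{5}_{t,x}$ on every bounded subinterval of $[0,\infty)$ by local theory and Strichartz, $\|v\|_{L^{5}_{t,x}([0,T]\times\R^{3})}\nearrow\infty$ while $\|v\|_{L^{5}_{t,x}([T,\infty)\times\R^{3})}=\infty$ as $T\to\infty$. Fix any $t_{n}\to\infty$ and put $\varphi_{n}:=v(t_{n})$, $v_{n}(t):=v(t_{n}+t)$; by \eqref{EquiNorm} the sequence $\{\varphi_{n}\}$ is $H^{1}$-bounded, with $E_{V}(\varphi_{n})=E_{0}(Q)$, $M(\varphi_{n})=M(Q)$, $P_{V}(\varphi_{n})>0$, and with $\|v_{n}\|_{L^{5}_{t,x}([0,\infty)\times\R^{3})}=\infty$, $\|v_{n}\|_{L^{5}_{t,x}([-t_{n},0]\times\R^{3})}\to\infty$.

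\textbf{Step 2.} Apply the linear profile decomposition (Proposition~\ref{LPD}) to $\{\varphi_{n}\}$, with profiles $\psi^{j}$, parameters $(t_{n}^{j},x_{n}^{j})$ and remainders $R_{n}^{J}$. The $L^{2}$, $\dot{H}^{1}_{V}$ and $L^{4}$ Pythagorean expansions yield asymptotic decoupling of $M$, $S_{V}$ and $P_{V}$; together with $M(\varphi_{n})=M(Q)$, $S_{V}(\varphi_{n})=S_{0}(Q)$, $P_{V}(\varphi_{n})>0$ and the variational analysis (Lemmas~\ref{minimumS}, \ref{GlobalS}, \ref{CondiBlo} and Theorem~\ref{Th1}) this forces every profile to have nonnegative limiting $P_{V}$ and nonnegative limiting $S_{V}$, the limiting $S_{V}$ of the profiles and of the remainder summing to $S_{0}(Q)$; hence each profile is either strictly sub-threshold or lies exactly at threshold. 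A strictly sub-threshold profile (with its nonnegative limiting $P_{V}$) has a nonlinear profile that is global with finite $L^{5}_{t,x}$ norm: by Theorem~\ref{Th1} if $x_{n}^{j}\equiv0$, and by Lemma~\ref{P:embedding} if $|x_{n}^{j}|\to\infty$, where the potential is asymptotically switched off (see Lemma~\ref{GHI}) and the free sub-/at-threshold theory applies. Thus, if more than one profile were present, or one profile with a non-vanishing remainder, all nonlinear profiles would scatter, and Lemma~\ref{StabilityNLS} applied to $\sum_{j}v_{n}^{j}+e^{itH}R_{n}^{J}$ would bound $\|v_{n}\|_{L^{5}_{t,x}([0,\infty))}$, contradicting Step~1. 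So exactly one profile $\psi$ is present, $R_{n}^{J}\to0$ in $H^{1}$, and $\psi$ lies exactly at threshold with $P_{V}(\psi)\ge0$. Its time parameter cannot satisfy $t_{n}^{1}\to+\infty$ (the nonlinear profile would scatter on $[0,\infty)$, against $\|v_{n}\|_{L^{5}_{t,x}([0,\infty))}=\infty$) nor $t_{n}^{1}\to-\infty$ (the nonlinear profile would be asymptotically linear on $[-t_{n},0]$, so by Lemma~\ref{StabilityNLS} $\|v_{n}\|_{L^{5}_{t,x}([-t_{n},0])}$ would stay bounded, against Step~1); hence $t_{n}^{1}\equiv0$ and $\varphi_{n}(\cdot+x_{n}^{1})\to\psi$ in $H^{1}$. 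Let $u$ be the forward solution of \eqref{NLS} attached to $\psi$ --- the \eqref{NLS}-flow of $\psi$ if $x_{n}^{1}\equiv0$, and $v$ itself if $|x_{n}^{1}|\to\infty$ (where $\psi$ is then a free-threshold state reached modulo a translation tending to infinity). By Lemma~\ref{GlobalS}, $u$ is forward-global and obeys \eqref{Su}, and $\|u\|_{L^{5}_{t,x}([0,\infty)\times\R^{3})}=\infty$ by another use of Lemma~\ref{StabilityNLS}; this is \eqref{Su}--\eqref{Blowup}.

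\textbf{Step 3.} The solution $u$ from Step~2 has exactly the features used there: it is forward-global, obeys \eqref{Su}, and $\|u\|_{L^{5}_{t,x}([0,T])}\nearrow\infty$ while $\|u\|_{L^{5}_{t,x}([T,\infty))}=\infty$ as $T\to\infty$. Therefore, for any sequence $s_{n}\ge0$: if $\{s_{n}\}$ is bounded, $\{u(s_{n})\}$ is precompact in $H^{1}$ by continuity of $t\mapsto u(t)$; if $s_{n}\to\infty$, running Step~2 with $\{u(s_{n})\}$ in place of $\{\varphi_{n}\}$ produces $y_{n}\in\R^{3}$ with $u(s_{n},\cdot+y_{n})$ convergent in $H^{1}$ along a subsequence. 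Hence $\{u(t,\cdot+y):t\ge0,\ y\in\R^{3}\}$ is precompact in $H^{1}$ modulo translations, and a standard selection/gluing argument provides a single $x_{0}:[0,\infty)\to\R^{3}$ with $\{u(t,\cdot+x_{0}(t)):t\ge0\}$ precompact in $H^{1}(\R^{3})$, as claimed.

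\textbf{Main obstacle.} The heart of the matter is Step~2: extracting a \emph{single} profile with vanishing remainder that lies \emph{exactly} at the threshold with $P_{V}\ge0$. This rests on the decoupling of the potential-dependent functionals $S_{V}$ and $P_{V}$ along the profile decomposition, and --- most delicately --- on a clean split of the profiles into ``potential'' profiles ($x_{n}^{j}\equiv0$) and ``escaping'' profiles ($|x_{n}^{j}|\to\infty$), the latter being governed by the free cubic NLS so that Lemma~\ref{P:embedding}, Theorem~\ref{Th1} and the free (Duyckaerts--Roudenko) threshold classification apply; excluding blow-up-side profiles and the cases $t_{n}^{1}\to\pm\infty$ then uses globality of $v$ together with the stability Lemma~\ref{StabilityNLS}. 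One should also record the normalization reducing \eqref{Thres} to the separated form appearing in \eqref{Su}.
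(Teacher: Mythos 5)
Your proposal is correct and follows essentially the same route as the paper: a scaling normalization reducing \eqref{Thres} to the separated form \eqref{Su}, then the linear profile decomposition of Proposition~\ref{LPD} applied to $u(\tau_n)$ along arbitrary $\tau_n\to\infty$, with the sub-threshold theory (Theorem~\ref{Th1}, Remark~\ref{Exitencescat}, Lemma~\ref{P:embedding}) plus stability (Lemma~\ref{StabilityNLS}) ruling out $J^{\ast}=0$ and $J^{\ast}\geq2$, leaving a single threshold profile with vanishing remainder and $t_n\equiv 0$, whence precompactness modulo translations. The only cosmetic difference is that the paper keeps the original rescaled solution as the critical element rather than re-solving from the extracted profile $\psi$, which spares the extra stability step you invoke at the end of Step~2.
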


Before the proof of Proposition~\ref{Criticalsolution}, we need the following lemma.

\begin{lemma}\label{Newconditions}
Suppose that Theorem \ref{Th2}~(i) holds for any $a>0$ with the condition \eqref{Thres} replaced by \eqref{Su}.
Then we can prove the same conclusion in Theorem \ref{Th2}~(i) (for any $a>0$) with the original condition \eqref{Thres}.
\end{lemma}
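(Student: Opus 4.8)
The plan is to prove Lemma~\ref{Newconditions} by a scaling reduction, exploiting the fact that although the inverse-power potential $a|x|^{-\mu}$ is not dilation invariant for fixed $a$, a dilation can be absorbed by changing the coefficient. Concretely, for $\lambda>0$ and a solution $u$ of \eqref{NLS} with coefficient $a$, I would set $u^{\lambda}(t,x):=\lambda u(\lambda^{2}t,\lambda x)$; a direct computation shows that $u^{\lambda}$ solves \eqref{NLS} with $a$ replaced by $\tilde a:=a\lambda^{2-\mu}$ (the dilation of $|x|^{-\mu}$ produces the factor $\lambda^{-\mu}$, and the rest of the scaling contributes $\lambda^{2}$). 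Since $1<\mu<2$ we have $2-\mu\in(0,1)$, so $\lambda\mapsto a\lambda^{2-\mu}$ is a bijection of $(0,\infty)$ onto itself; this is exactly why we need the hypothesis to hold for \emph{every} $a>0$.

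Next I would record the (elementary, change-of-variables) transformation rules for $v_0:=u^{\lambda}(0,\cdot)=\lambda u_0(\lambda\,\cdot)$: one has $\|\nabla v_0\|_{L^2}^2=\lambda\|\nabla u_0\|_{L^2}^2$, $\int \tilde a|x|^{-\mu}|v_0|^2\,dx=\lambda\int a|x|^{-\mu}|u_0|^2\,dx$, $\|v_0\|_{L^4}^4=\lambda\|u_0\|_{L^4}^4$ and $\|v_0\|_{L^2}^2=\lambda^{-1}\|u_0\|_{L^2}^2$. Hence $E_{\tilde V}(v_0)=\lambda E_{V}(u_0)$, $P_{\tilde V}(v_0)=\lambda P_{V}(u_0)$ and $M(v_0)=\lambda^{-1}M(u_0)$; in particular the product $E_{\tilde V}(v_0)M(v_0)=E_{V}(u_0)M(u_0)$ and the sign of the virial functional are scaling invariant. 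Moreover $\|u^{\lambda}\|_{L^{5}_{t,x}(\R\times\R^{3})}=\|u\|_{L^{5}_{t,x}(\R\times\R^{3})}$, and $u^{\lambda}$ is global in time if and only if $u$ is.

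Then, given $u_0\in H^1(\R^3)$ satisfying \eqref{Thres} for some $a>0$, I would take $\lambda_0:=M(u_0)/M(Q)$ and set $v_0:=\lambda_0 u_0(\lambda_0\,\cdot)$, $\tilde a:=a\lambda_0^{2-\mu}>0$. Then $M(v_0)=M(Q)$, $P_{\tilde V}(v_0)=\lambda_0 P_V(u_0)\ge 0$, and $E_{\tilde V}(v_0)=\lambda_0 E_V(u_0)=\tfrac{M(u_0)}{M(Q)}\cdot\tfrac{E_0(Q)M(Q)}{M(u_0)}=E_0(Q)$; that is, $v_0$ satisfies \eqref{Su} with coefficient $\tilde a>0$. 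By the assumed form of Theorem~\ref{Th2}(i), the solution $v$ to \eqref{NLS} with coefficient $\tilde a$ and data $v_0$ is global and belongs to $L^{5}_{t,x}(\R\times\R^{3})$. By uniqueness of solutions, $v(t,x)=\lambda_0 u(\lambda_0^{2}t,\lambda_0 x)$ on the common existence interval; hence $u$ is global, and the scaling invariance of the $L^{5}_{t,x}$ norm gives $\|u\|_{L^{5}_{t,x}(\R\times\R^{3})}=\|v\|_{L^{5}_{t,x}(\R\times\R^{3})}<\infty$. Proposition~\ref{Condition-scatte} then gives scattering of $u$ in both time directions, as required.

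This argument is entirely soft, so I do not expect a genuine obstacle. The only points demanding care are bookkeeping ones: getting the exponent in $\tilde a=a\lambda^{2-\mu}$ right so that $\tilde a$ sweeps out all of $(0,\infty)$ (this uses $1<\mu<2$), and checking that $\|\nabla\cdot\|_{L^2}^2$, $\int V|\cdot|^2$, $\|\cdot\|_{L^4}^4$ and $P_V$ are all homogeneous of the same degree under the dilation, so that fixing the mass at $M(Q)$ automatically fixes the energy at $E_0(Q)$; both are immediate changes of variables.
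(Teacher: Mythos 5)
Your proposal is correct and is essentially identical to the paper's own proof: the same rescaling $v(t,x)=\lambda u(\lambda^{2}t,\lambda x)$ with $\lambda=M(u_{0})/M(Q)$, absorbing the dilation into the coefficient $a\mapsto a\lambda^{2-\mu}>0$ (which is exactly where the hypothesis ``for any $a>0$'' is used), and concluding via the invariance of the $L^{5}_{t,x}$ norm together with Proposition~\ref{Condition-scatte}.
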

\begin{proof}
Let $a>0$. Suppose that $u_{0}\in H^{1}(\R^{3})$ such that 
\[
E_{V}(u_{0})M(u_{0})=E_{0}(Q)M(Q)
\quad\text{and}\quad P_{V}(u_{0})\geq 0.
\]
and assume that Theorem \ref{Th2}~(i) is true with the condition \eqref{Su}. 

Writing $\lambda=\frac{M(u_{0})}{M(Q)}$, $v_{0}(x):=\lambda u_{0}(\lambda x)$, $v(t,x)=\lambda u(\lambda^{2}t, \lambda x)$ and
\[
V_{\lambda}(x)=\lambda^{2}V(\lambda x)=\lambda^{2-\mu}a |x|^{-\mu}
\]
we obtain from \eqref{Thres},
\[
E_{V_{\lambda}}(v_{0})=E_{0}(Q), \quad M(v_{0})=M(Q)
\quad \text{and}\quad 
P_{V_{\lambda}}(v_{0})=\lambda P_{V}(u_{0})\geq 0.
\]
Notice also that the function $v$ satisfies
\[
i\partial_{t}v+\Delta v-\lambda^{2-\mu}a |x|^{-\mu}v +|v|^{2}v=0.
\]
Since $\lambda^{2-\mu}a>0$, by  hypothesis we infer $v\in L_{t,x}^{5}(\R\times \R^{3})$, which implies that
$u\in L_{t,x}^{5}(\R\times \R^{3})$. Therefore,  we obtain that $u$ scatters in $H^{1}(\R^{3})$.
\end{proof}

\begin{proof}[{Proof of Proposition \ref{Criticalsolution}}]
 We follow the outline of \cite[Proposition 3.1]{MiaMurphyZheng2021}.
Suppose that Theorem \ref{Th2}~(i) fails. Lemma \ref{Newconditions} implies that there exists  $u_{0}\in H^{1}(\R^{3})$ so that
\[
E_{V}(u_{0})=E_{0}(Q), \quad M(u_{0})=M(Q)
\quad \text{and}\quad 
P_{V}(u_{0})\geq 0,
\] 
Moreover, 
\[
\|u \|_{L_{t,x}^{5}([0, \infty)\times\R^{3})}=\infty,
\]
where $u$ is the corresponding forward-global solution to  \eqref{NLS} with initial data $u_{0}$. By Lemma \ref{GlobalS} we see that $\|u(t)\|_{H^{1}}\lesssim_{Q} 1$ for all $t\in \R$. 
Now we show that there exists a  parameter $x_{0}:[0, \infty) \to \R$ such that
$\left\{u(t, \cdot+x_{0}(t)):t\in [0, \infty)\right\}$ is precompact in $H^{1}(\R^{3})$. 

By \cite[Subsection 3.2]{MiaMurphyZheng2021}, it is enough to show that if  $\{\tau_n\}_{n\in \N}$ is an arbitrary sequence so that $\tau_n \to \infty$, then there exists a sequence $\{x_n\}_{n\in \N}\subset \R^{3}$ such that $u(\tau_n, x+x_n)$ converges strongly in $H^1(\mathbb{R}^{3})$.

Linear profile decomposition (cf. Lemma \ref{LPD}), implies, up to subsequence, that
\[
	u_{n}=u(\tau_n)=\sum_{j=1}^{J} e^{i t_{n}^j H} \tau_{x_n^j} \psi^j +R_n^J.
\]
and the properties in the statement hold. We set $\psi_n^j := e^{i t_{n}^j H} \tau_{x_n^j} \psi^j$. 
 
We claim that $J^{\ast}=1$. Indeed, first assume $J^{\ast}=0$. By the profile decomposition (cf. Lemma \ref{LPD}) we get $\|e^{-it H} u(\tau_n)\|_{L_{t,x}^{5}(\R\times\R^{3})} \to 0$ as $n \to \infty$. But then, from stability (cf. Lemma \ref{StabilityNLS}), we get $\|u\|_{L_{t,x}^5([\tau_n,\infty)\times\R^{3})} \lesssim 1$ for large $n \in \mathbb{N}$, which is a contradiction with the definition of $u$. 

Next, suppose $J^{\ast}\geq 2$. By Lemma~\ref{LPD} we have the following for any $0\leq J \leq J^{\ast}$,

\begin{align*}
	&\lim_{n \to \infty} \big(\sum_{j=1}^{J} M(\psi_n^j) + M(R_n^J)\big) = \lim_{n \to \infty} M(u_{n}) =M(u_0) =M(Q),
	\\
	&\lim_{n \to \infty} \big(\sum_{j=1}^{J} E_{V}(\psi_n^j) + E_{V}(R_n^J)\big) = \lim_{n \to \infty} E_{V}(u_{n}) =E_{V}(u_0) =E_{0}(Q),\\
	&\limsup_{n \to \infty} \big(\sum_{j=1}^{J} \| \psi_n^j\|_{\dot{H}^{1}_{V}}^2 + \| R_n^J\|_{\dot{H}^{1}_{V}}^2\big)
	= \limsup_{n \to \infty} \|u(\tau_n)\|_{\dot{H}^{1}_{V}}^2 \leq \| Q\|_{\dot{H}^{1}}^2.
\end{align*}
Here we also have used Lemma~\ref{GlobalS} in the last inequality.
In particular,
\begin{align}\label{LimS}
	&\lim_{n \to \infty} \big(\sum_{j=1}^{J} S_{V}(\psi_n^j) + S_{V}(R_n^J)\big) = \lim_{n \to \infty} S_{V}(u_{n})
	=S_{V}(u_0) = S_{0}(Q).
\end{align}
It is not hard to show that $\liminf_{n\to \infty}E_{V}(\psi_n^j)>0$  (see, e.g, \cite[(3.13)]{MiaMurphyZheng2021}).
In particular, $\liminf_{n\to \infty}S_{V}(\psi_n^j)> \|\psi^j\|^{2}_{L^{2}}>0$.
Thus, as $J^{\ast}\geq 2$, there exists  $\delta>0$ so that
\begin{align}\label{MEC}
	M(\psi_n^j)E_{V}(\psi_n^j)&\leq M(Q)E_{0}(Q)-\delta,\\\label{GL2C}
	\|\psi_n^j\|_{L^{2}}\|\psi_n^j\|_{\dot{H}^{1}_{V}}& \leq \|Q\|_{L^{2}}\|Q\|_{\dot{H}^{1}}-\delta,\\\label{SVC}
	S_{V}(\psi_n^j)&\leq S_{0}(Q)-\delta,
\end{align}
for sufficiently large $n$.

We will use $\psi_n^j$ to build approximate solutions to \eqref{NLS} under three cases: 
$x^{j}_{n}\equiv 0$ and $t^{j}_{n}\equiv 0$; $x^{j}_{n}\equiv 0$ and $t^{j}_{n}\to \pm \infty$; and $|x^{j}_{n}| \to +\infty$.
For $j$ such that $x^{j}_{n}\equiv 0$ and $t^{j}_{n}\equiv 0$, thanks to \eqref{MEC}-\eqref{GL2C} we infer that $P_{V}(\psi_n^j)\geq 0$
for large $n$ (cf. \cite[Theorem 1.10]{HamanoIkeda20}). Then,  we  can apply Theorem~\ref{Th1} to constitute a global solution obeying global space-time bounds. For $j$ such that $x^{j}_{n}\equiv 0$ and $t^{j}_{n}\to \pm \infty$, by Remark~\ref{Exitencescat}, there exists $v^{j}$ solution to \eqref{NLS} that scatters to $e^{-itH}\psi^{j}$ as  $t\to \pm \infty$. Again, from 
\eqref{MEC}-\eqref{GL2C} we have that the solution is global and satisfies uniform space-time bounds. In either case, we set
\[
v^{j}_{n}(t,x)=v^{j}(t+t^{j}_{n}, x)
\]

Finally, for  $j$ such that $|x^{j}_{n}| \to +\infty$, we have that (cf. \cite[Lemma 2.7 and (2.20)]{GuoWangYao2018}) 
\[
\lim_{n\to \infty}\|\psi_n^j\|_{{H}^{1}_{V}}^{2}=\|\psi^j\|_{{H}^{1}}^{2}>0.
\]
In particular,
\begin{align}\label{MEC11}
	M(\psi^j)E_{0}(\psi^j)\leq M(Q)E_{0}(Q)-\delta,
\end{align}
 Next, if $t^{j}_{n}\equiv 0$, we get
\[
\|\psi^j\|_{L^{2}}\|\psi^j\|_{\dot{H}^{1}}\leq \|Q\|_{L^{2}}\|Q\|_{\dot{H}^{1}}-\delta,
\]
which implies, by \eqref{MEC11}, that $P_{0}(\psi^j)\geq 0$ for $n$ large (cf. \cite[p. 636]{AkahoriNawa2013}).
Moreover, in this case $t^{j}_{n}\equiv 0$, we also have (cf. \eqref{SVC})
\[
S_{0}(\psi^j)= \lim_{n\to \infty}S_{V}(\psi_{n}^j)\leq S_{0}(Q)-\delta.
\]
Therefore, when $t^{j}_{n}\equiv 0$, we obtain that $\psi^j$ satisfies the condition \eqref{PriC}.

On the other hand, if $t^{j}_{n} \to \pm\infty$, we get
\[
\tfrac{1}{2}\|\psi^j\|_{{H}^{1}}^{2}=\lim_{n\to \infty}S_{V}(\psi_{n}^j)\leq S_{0}(Q)-\delta,
\]
where we have used that the nonlinear part of $S_{V}$ tends to zero as $n\to \infty$ (cf. \cite[Lemma 2.8]{GuoWangYao2018}).
This implies that $\psi^j$ satisfies the condition \eqref{seC}. Thus, by Lemma~\ref{P:embedding} we obtain a solution $v_{n}^{j}$
 to \eqref{NLS} with $v_{n}^{j}(0)=\psi_{n}^j$ obeying the global space-time bounds.

Now the idea of the proof is approximate
\[\text{NLS}_{V}(t)u_{n}\approx \sum^{J}_{j=1}v^{j}_{n}(t)+e^{-it H}R^{J}_{n},\]
under tree cases $x^{j}_{n}\equiv 0$ and $t^{j}_{n}\equiv 0$; $x^{j}_{n}\equiv 0$ and $t^{j}_{n}\to \pm \infty$ and $|x^{j}_{n}| \to +\infty$, and we use perturbation argument (cf. Lemma \ref{StabilityNLS}) to obtain a contradiction to \eqref{Blowup}.
With this in mind, we set
\[u^{J}_{n}(t,x):=\sum^{J}_{j=1}v^{j}_{n}(t,x)+e^{-it H}R^{J}_{n}.
\]
First, we note for each $J$,
\begin{equation}\label{Aps}
\|u^{J}_{n}(0)-u_{n}\|_{H^{1}_{x}}\rightarrow0,\quad \text{as $n\rightarrow\infty$}.
\end{equation}
Moreover,  by using the same argument to \cite[see proof of (3.15)-(3.16)]{MiaMurphyZheng2021} we have
\begin{align*}
	&\sup_{J}\limsup_{n\rightarrow\infty}[\|u^{J}_{n}(0)\|_{H^{1}}+ \| {u}^{J}_{n}  \|_{L^{5}_{t,x}(\R\times \R^{3})}] \lesssim_{\delta} 1\\
	&\sup_{J}\limsup_{n\rightarrow\infty}\||\nabla|^{\frac{1}{2}}[(i\partial_{t}-H){u}^{J}_{n}+|{u}^{J}_{n}|^{2}{u}^{J}_{n}]\|_{N(\R)}=0.
\end{align*}
By estimates above and \eqref{Aps}, Lemma~\ref{StabilityNLS} implies that $u\in L^{5}_{t,x}(\R\times \R^{3})$, which is a contradiction to
\eqref{Blowup}.

Therefore $J^{\ast}=1$. In particular, we obtain
\begin{align*}
	u(\tau_n) =e^{it_n H} \tau_{x_n} \psi +R_n
\end{align*}
with $\lim_{n \to \infty}\|R_n\|_{H^1} =0$. Notice that if $|t_n| \to \infty$, then we have a contradiction to the non-scattering of $u$ by the standard argument. Thus,  $u(\tau_n, x+x_n)=\psi(x) +R_n (x+x_n)$, hence $u(\tau_n, \cdot +x_n)$ strongly converges to $\psi$ in $H^1(\mathbb{R}^{3})$. This completes the proof of proposition.

\end{proof}

\section{Modulation analysis}\label{S:Modulation}

Through this section, we assume that $u(t)$ is a solution to \eqref{NLS} with
\begin{equation}\label{22condition}
E_{V}(u_{0})=E_{0}(Q) \qtq{and} \quad M(u_{0})=M(Q).
\end{equation}

For $\delta_{0}>0$ small, we define (Recall that $\delta(t):=\delta(u(t))$) 
\[
I_{0}=\left\{t\in [0, \infty):|\delta(u(t))|<\delta_{0}\quad\text{for $t$ in the domain existence of $u$}\right\},
\] 
where $u(t)$ is the corresponding solution to Cauchy problem \eqref{NLS}.

\begin{lemma}\label{LemmaMod}  
For any $\epsilon>0$, there exists $\delta_{0}=\delta_{0}(\epsilon)>0$ small such that if
$|\delta(u(t))|<\delta_{0}$, then there exists $(\theta_{0}(t), y_{0}(t))\in \R\times\R^{3}$ so that
\begin{equation}\label{CondiModu}
\| u(t)-e^{i\theta_{0}(t)}Q(\cdot-y_{0}(t))\|_{H^{1}}<\epsilon.
\end{equation}
\end{lemma}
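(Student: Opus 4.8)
The plan is to argue by contradiction via a compactness/concentration argument. Suppose the statement fails: then there exist $\epsilon_0>0$ and a sequence of times $t_n$ (with $u(t_n)$ in the domain of existence) such that $|\delta(u(t_n))|\to 0$ but $\inf_{\theta,y}\|u(t_n)-e^{i\theta}Q(\cdot-y)\|_{H^1}\geq \epsilon_0$ for all $n$. Set $f_n:=u(t_n)$. By \eqref{22condition} we have $E_V(f_n)=E_0(Q)$ and $M(f_n)=M(Q)$ for all $n$; combined with $\delta(f_n)=\|Q\|_{\dot H^1}^2-\|f_n\|_{\dot H^1_V}^2\to 0$, this says $\|f_n\|_{\dot H^1_V}^2\to\|Q\|_{\dot H^1}^2$, and hence from the definition of $E_V$ also $\|f_n\|_{L^4}^4=4\big(\tfrac12\|f_n\|_{\dot H^1_V}^2-E_V(f_n)\big)\to 4\big(\tfrac12\|Q\|_{\dot H^1}^2-E_0(Q)\big)=\|Q\|_{L^4}^4$, using the Pohozaev identities \eqref{PoQ}. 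So $(f_n)$ is a bounded sequence in $H^1(\R^3)$ for which the functionals $\|\cdot\|_{\dot H^1_V}^2$, $\|\cdot\|_{L^2}^2$, $\|\cdot\|_{L^4}^4$ all converge to the corresponding values at $Q$; in particular $(f_n)$ is a minimizing sequence (in a suitable sense) for the sharp Gagliardo--Nirenberg inequality \eqref{GI}, since $\|f_n\|_{L^4}^4/\big(\|\nabla f_n\|_{L^2}^3\|f_n\|_{L^2}\big)\to C_{GN}$ once we note $V\geq 0$ forces $\|\nabla f_n\|_{L^2}\le\|f_n\|_{\dot H^1_V}$, and the limit of the ratio can only be $\le C_{GN}$ with equality in the limit forcing $\|\nabla f_n\|_{L^2}^2\to\|\nabla Q\|_{L^2}^2$ and $\int V|f_n|^2\to 0$.

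Next I would apply the profile decomposition (Lemma~\ref{LPD}) to $(f_n)$ — or, more cleanly, the classical Lieb-type compactness / concentration-compactness for minimizing sequences of the Gagliardo--Nirenberg functional on $\R^3$. The upshot is that, after extracting a subsequence and translating by a suitable $y_n\in\R^3$ (necessarily with $|y_n|\to\infty$ if mass escapes to where $V\to 0$; but in fact one shows the relevant profile must sit at a single location), there is $g\in H^1$ with $f_n(\cdot+y_n)\to g$ strongly in $H^1(\R^3)$, $\|g\|_{L^2}=\|Q\|_{L^2}$, $\|\nabla g\|_{L^2}=\|\nabla Q\|_{L^2}$, $\|g\|_{L^4}^4=\|Q\|_{L^4}^4$, and $g$ is an optimizer of \eqref{GI}. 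By the characterization of optimizers of the sharp Gagliardo--Nirenberg inequality (equality in \eqref{GI} forces, up to scaling, phase and translation, $g=e^{i\theta}Q(\cdot-y)$), and since the $L^2$ and $\dot H^1$ norms of $g$ already match those of $Q$ exactly, the scaling parameter is pinned to $1$, so $g=e^{i\theta_\star}Q(\cdot-y_\star)$ for some $\theta_\star\in\R$, $y_\star\in\R^3$. Then $f_n(\cdot + y_n)\to e^{i\theta_\star}Q(\cdot-y_\star)$ in $H^1$, so $\|u(t_n)-e^{i\theta_\star}Q(\cdot-(y_n+y_\star))\|_{H^1}\to 0$, contradicting $\inf_{\theta,y}\|u(t_n)-e^{i\theta}Q(\cdot-y)\|_{H^1}\geq\epsilon_0$. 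This contradiction proves the lemma, and one obtains $(\theta_0(t),y_0(t))$ for each $t$ with $|\delta(u(t))|<\delta_0$ by simply choosing a near-infimizer of $\inf_{\theta,y}\|u(t)-e^{i\theta}Q(\cdot-y)\|_{H^1}$.

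The main obstacle is the noncompactness of translations in $\R^3$ together with the presence of the potential $V(x)=a|x|^{-\mu}$: a priori, a minimizing profile could run off to spatial infinity, where $V$ effectively vanishes and the problem degenerates to the free Gagliardo--Nirenberg problem — so one must rule out (or, rather, handle) the possibility that compactness is only recovered after translating by $|y_n|\to\infty$, and in that regime show that the term $\int V|f_n|^2$ does not interfere with identifying the limit as a translate of $Q$. This is exactly the delicate point that the no-minimizer statement Lemma~\ref{minimumS} (and the fact that $d_a=S_0(Q)$ is not attained for $a>0$) is meant to reconcile: here $\delta(u(t))\to 0$ does \emph{not} say $S_V(u(t))\to S_0(Q)$ with $P_V(u(t))\to 0$ at a fixed location, so there is no contradiction — the profile simply escapes to infinity and the limiting object is a genuine translate of $Q$, which is consistent with everything. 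I would make this precise by carrying the profile decomposition bookkeeping (using the Pythagorean expansions in Lemma~\ref{LPD}, together with $\|\tau_{x_n^j}\psi^j\|_{\dot H^1_V}\to\|\psi^j\|_{\dot H^1}$ when $|x_n^j|\to\infty$, as in \cite[Lemma 2.7]{GuoWangYao2018}) to conclude that all the mass, $\dot H^1$, and $L^4$ content concentrates in a single profile $\psi^1$ with $S_0(\psi^1)=S_0(Q)$, $M(\psi^1)=M(Q)$, $P_0(\psi^1)=0$, hence $\psi^1=e^{i\theta_\star}Q(\cdot-y_\star)$ by the variational characterization of $Q$.
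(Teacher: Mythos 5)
Your proposal is correct, and it follows the same overall contradiction-plus-compactness strategy as the paper, but the variational characterization of $Q$ that does the work is different. The paper's proof takes the same sequence $t_n$ with $|\delta(u(t_n))|\to 0$, observes $S_{V}(u(t_n))=S_0(Q)$ and $N_{V}(u(t_n))\to N_0(Q)=0$ for the Nehari functional $N_V$, treats $\{u(t_n)\}$ as a minimizing sequence for $S_0(Q)=\inf\{S_0(f):f\neq 0,\ N_0(f)\le 0\}$, and cites a compactness result of Le Coz for that Nehari-constrained problem to extract $e^{i\theta_n}u(t_n,\cdot+y_n)\to Q$ in $H^1$. You instead pin down the individual norms --- $\|u(t_n)\|_{\dot{H}^{1}_{V}}^2\to\|Q\|_{\dot{H}^1}^2$ from $\delta\to 0$, $\|u(t_n)\|_{L^4}^4\to\|Q\|_{L^4}^4$ from conservation of $E_V$ together with \eqref{PoQ}, and then the squeeze via \eqref{GI} giving $\|\nabla u(t_n)\|_{L^2}\to\|\nabla Q\|_{L^2}$ and $\int V|u(t_n)|^2\,dx\to 0$ --- and appeal to compactness of optimizing sequences for the sharp Gagliardo--Nirenberg (Weinstein) functional plus uniqueness of its extremizer. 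Both routes outsource a concentration-compactness statement of the same nature; yours has the virtue of making $\int V|u(t_n)|^2\,dx\to 0$ explicit before invoking compactness (the paper's intermediate claim that $N_0(u(t_n))\le 0$ for large $n$ is in fact slightly delicate, since $N_0(u(t_n))=\delta(t_n)-\int V|u(t_n)|^2\,dx$ and $\delta(t_n)$ need not be nonpositive), while the paper's citation is more turnkey and avoids re-proving GN compactness. Your closing discussion of profiles possibly escaping to spatial infinity is apt and consistent with Lemma~\ref{minimumS}, but once the Gagliardo--Nirenberg compactness (or, equivalently, the bookkeeping through Lemma~\ref{LPD}) is granted, it is not an obstruction: the translation parameter $y_0(t)$ is simply allowed to be unbounded, which is exactly what Remark~\ref{Yinfinity} later confirms.
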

\begin{proof}
We argue by contradiction. Thus, suppose that there exist $\epsilon>0$ small and a sequence of times
$\left\{t_{n}\right\}\subset \R$ with
\begin{equation}\label{Contra11}
|\delta(u(t_{n}))|\to 0, \quad
\inf_{\theta\in \R}\inf_{y\in \R^{3}}\|u(t_{n})-e^{i\theta}Q(\cdot-y)\|_{H^{1}}\geq \epsilon.
\end{equation}
By using \eqref{22condition} we see that (recall that $|\delta(u(t_{n}))|\to 0$)
\[
S_{V}(u(t_{n}))= S_{0}(Q)\quad \text{and}\quad  N_{V}(u(t_{n}))\to N_{0}(Q)=0,
\]
where $N_{V}$ is the Nehari functional,
\[
N_{V}(f)=\|\nabla f\|^{2}_{L^{2}}+\|f\|^{2}_{L^{2}}+ \int_{\R^{3}}V(x)|f|^{2}dx -\|f\|_{L^{4}}^{4}
\quad \text{for $f\in H^{1}(\R^{3})$}.
\]
But then, we infer that $N_{0}(u(t_{n})) \leq 0$ for $n$ sufficiently large, which implies that
$\left\{u(t_{n})\right\}$ is a minimizing sequence of problem 
\[
S_0(Q)=\inf\left\{S_{0}(f): f\in H^{1}(\R^{3})\setminus\left\{0\right\},  N_{0}(f) \leq 0\right\}.
\]
From \cite[Proposition 3.12]{SLC} we have that there exists $(\theta_{n}, y_{n})\in \R^{4}$ such that
$e^{i\theta_{n}}u(t_{n},\cdot+y_{n})\to Q$ in $H^{1}(\R^{3})$. However, this leaves a contradiction to \eqref{Contra11}.
\end{proof}

\begin{remark}\label{Yinfinity}
Let $R\geq1$. If $\delta_{0}$ is sufficiently small in Lemma \ref{LemmaMod} we can assume that
\begin{equation}\label{Nobound}
|y_{0}(t)|\geq R\quad \text{for $t\in \R$}.
\end{equation}
Indeed, if \eqref{Nobound} is false, then there exists a sequence  $\left\{t_{n}\right\}$
such that
\begin{equation}\label{Bounddelta}
\text{ $|\delta(t_{n})|\to0$ and $|y_{0}(t_{n})|\leq R$ for all $n\in \N$}.
\end{equation}
Moreover, by using  \eqref{CondiModu} we see that (see proof of Lemma~\ref{LemmaMod})
\begin{align}\label{Cv112}
	e^{-i \theta_{0}(t_{n})}u(t_{n}, \cdot+y_{0}(t_{n}))\to Q \qtq{in} H^{1}(\R^{3}).
\end{align}
 In particular,  we get $E_{V}(u(t_{n}))=E_{0}(Q)=\lim_{n\to \infty}E_{0}(u(t_{n})) $, which implies
by \eqref{Bounddelta} we get
\[
\int_{\R^{3}}V(x)|u(t_{n},x)|^{2}dx\to 0 \qtq{as $n\to \infty$.}
\]
But then, again by \eqref{Cv112} we have
\[
\int_{\R^{3}}V(x)|Q(\cdot-y_{0}(t_{n}))|^{2}dx\to 0 \qtq{as $n\to \infty$,}
\]
which is a contradiction because the sequence $\left\{y_{0}(t_{n})\right\}$ is bounded.
\end{remark}

By Lemma \ref{LemmaMod} and an application of implicit function theorem we obtain the following result.

\begin{lemma}\label{ExistenceF}
If  $\delta_{0}>0$ is sufficiently small, then there exist two functions $\theta: I_{0}\to \R$ and $y: I_{0}\to \R^{3}$ so that 
\begin{equation}\label{Taylor}
\| u(t)-e^{i\theta(t)}Q(\cdot-y(t))\|_{H^{1}} \ll 1.
\end{equation}
Writing $g(t):=g_{1}(t)+i g_{2}(t)=e^{-i\theta(t)}[u(t)-e^{i\theta(t)}Q(\cdot-y(t))]$, we have that $g$ satisfies
\begin{equation}\label{Ortogonality}
\< g_{2}(t), Q(\cdot-y(t)) \>=\< g_{1}(t), \partial_{x_j}Q(\cdot-y(t))\>\equiv 0 \quad (j=1,2,3).
\end{equation}
\end{lemma}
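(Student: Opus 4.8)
The plan is to combine the qualitative statement of Lemma~\ref{LemmaMod} with a quantitative application of the implicit function theorem, in the spirit of the standard modulation lemma near a ground state. First I would fix $\epsilon_0>0$ small (to be chosen) and take the corresponding $\delta_0=\delta_0(\epsilon_0)>0$ from Lemma~\ref{LemmaMod}. Then for each $t\in I_0$ we have $|\delta(u(t))|<\delta_0$, so Lemma~\ref{LemmaMod} provides \emph{some} $(\theta_\ast(t),y_\ast(t))\in\R\times\R^3$ with $\|u(t)-e^{i\theta_\ast(t)}Q(\cdot-y_\ast(t))\|_{H^1}<\epsilon_0$. This is the starting point; the job is to refine $(\theta_\ast,y_\ast)$ into genuine modulation parameters $(\theta(t),y(t))$ satisfying the orthogonality conditions \eqref{Ortogonality}, and to do so continuously (indeed smoothly) in $t$.

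Next I would set up the implicit function theorem. Consider the map
\[
F:H^1(\R^3)\times\R\times\R^3\to\R^4,\qquad
F(v,\theta,y)=\Bigl(\IM\bigl\langle e^{-i\theta}v,\,Q(\cdot-y)\bigr\rangle,\ \RE\bigl\langle e^{-i\theta}v,\,\partial_{x_j}Q(\cdot-y)\bigr\rangle_{j=1,2,3}\Bigr),
\]
so that writing $g=e^{-i\theta}v-Q(\cdot-y)$, the condition $F(v,\theta,y)=0$ is exactly \eqref{Ortogonality} (using that $\langle Q(\cdot-y),\partial_{x_j}Q(\cdot-y)\rangle=0$ and $\langle Q(\cdot-y),iQ(\cdot-y)\rangle=0$, the real/imaginary parts of the inner products involving $Q$ itself vanish). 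At $v=e^{i\theta_0}Q(\cdot-y_0)$ we have $F=0$, and the $(\theta,y)$-derivative of $F$ there is, up to harmless factors, the Gram matrix of $\{iQ,\partial_{x_1}Q,\partial_{x_2}Q,\partial_{x_3}Q\}$ (translated by $y_0$), which is nondegenerate because $iQ$ and the $\partial_{x_j}Q$ are linearly independent in $L^2$ and mutually orthogonal by parity. Hence the partial Jacobian $D_{(\theta,y)}F$ is invertible at every such point, with inverse bounded uniformly in $(\theta_0,y_0)$ by translation and rotation invariance. The implicit function theorem then yields $\rho>0$ and smooth maps $v\mapsto(\theta(v),y(v))$ defined on the $\rho$-ball around each $e^{i\theta_0}Q(\cdot-y_0)$ solving $F(v,\theta(v),y(v))=0$, with $|\theta(v)-\theta_0|+|y(v)-y_0|\lesssim\|v-e^{i\theta_0}Q(\cdot-y_0)\|_{H^1}$.

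Then I would choose $\epsilon_0<\rho$; since $\|u(t)-e^{i\theta_\ast(t)}Q(\cdot-y_\ast(t))\|_{H^1}<\epsilon_0$, the point $u(t)$ lies in the domain of the local solution map centered at $e^{i\theta_\ast(t)}Q(\cdot-y_\ast(t))$, and we define $(\theta(t),y(t)):=(\theta(u(t)),y(u(t)))$. By construction \eqref{Ortogonality} holds, and since $t\mapsto u(t)$ is continuous into $H^1$ and the IFT map is smooth, $(\theta(t),y(t))$ depends continuously (in fact as smoothly as $u$ does) on $t$ on $I_0$; a routine uniqueness argument for the IFT solution on overlapping balls shows the local definitions patch together into globally defined functions on $I_0$. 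Finally \eqref{Taylor} follows from $\|u(t)-e^{i\theta(t)}Q(\cdot-y(t))\|_{H^1}\lesssim\|u(t)-e^{i\theta_\ast(t)}Q(\cdot-y_\ast(t))\|_{H^1}+|\theta(t)-\theta_\ast(t)|+|y(t)-y_\ast(t)|\lesssim\epsilon_0$, which can be made $\ll 1$ by shrinking $\epsilon_0$ (equivalently $\delta_0$) at the outset. The main obstacle, and the only place real care is needed, is verifying the nondegeneracy and \emph{uniform} invertibility of $D_{(\theta,y)}F$ together with the patching of local branches into single-valued functions on all of $I_0$; the rest is a direct application of Lemma~\ref{LemmaMod} and the implicit function theorem.
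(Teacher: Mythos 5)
Your proposal is correct and follows essentially the same route as the paper, which obtains the lemma precisely by combining Lemma~\ref{LemmaMod} with the implicit function theorem (deferring the details to the analogous Lemma~5.3 of Miao--Murphy--Zheng). Your fleshed-out version of the nondegeneracy of the $(\theta,y)$-Jacobian, the uniformity under translation, and the patching of local branches is exactly the standard argument being invoked there.
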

\begin{proof}
The proof is the same as in \cite[Lemma 5.3]{MiaMurphyZheng2021}.
\end{proof}

\begin{proposition}[Modulation]\label{Modilation11}
Fix $a>0$ and $0<\mu<2$. Suppose that $u(t)$ is a solution to \eqref{NLS} obeying 
\eqref{22condition} and $|\delta(0)|=|\delta(u_{0})|>0$. Then, there exist 
$\delta_{0}>0$ sufficiently small and two functions $\theta: I_{0}\to \R$ and $y: I_{0}\to \R^3$ so that $u(t)$
admits the decomposition
\begin{equation}\label{DecomU}
u(t,x)=e^{i \theta(t)}[g(t)+Q(x-y(t))]\quad \text{for all $t\in I_{0}$},
\end{equation}
and the following holds:
\begin{equation}\label{Estimatemodu}
\frac{e^{-2|y(t)|}}{|y(t)|^{2}}+|y^{\prime}(t)|+
\left[\int_{\R^{3}}V(x)|u(t,x)|^{2}dx\right]^\frac{1}{2}
\lesssim |\delta(t)|\sim \|g(t)\|_{H^{1}} \quad \text{for all $t\in I_{0}$}.
\end{equation}
Furthermore, letting $g=\alpha Q(\cdot-y)+h$ and $g=g_{1}+ig_{2}$, where
\[
\alpha=\frac{\< g_{1}(\cdot{{+}}y),\Delta Q \>}{\<Q, \Delta Q\>}\in \R,
\]
and  we have 
\begin{align}\label{alfaE}
	|\alpha(t)|&\sim |\delta(t)|, \quad \|h(t)\|_{H^{1}}\sim |\delta(t)|  \qtq{and}\\\label{ultima}
	|\alpha^{\prime}(t)|&\lesssim|\delta(t)| \quad \text{for $t\in I_{0}$}.
\end{align}
\end{proposition}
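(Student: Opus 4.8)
The plan is to combine the qualitative modulation statement of Lemma~\ref{ExistenceF} with a coercivity (quadratic lower bound) estimate for the linearized action around $Q$, together with the conservation laws \eqref{22condition} and the equations of motion for the modulation parameters. First I would set $\delta(t):=\delta(u(t))$ and work on the set $I_0$ where $|\delta(t)|<\delta_0$; Lemma~\ref{ExistenceF} already supplies $C^1$ functions $\theta,y$ and the decomposition $u(t,x)=e^{i\theta(t)}[g(t)+Q(x-y(t))]$ with $g=g_1+ig_2$ satisfying the orthogonality relations $\langle g_2(t),Q(\cdot-y(t))\rangle=\langle g_1(t),\partial_{x_j}Q(\cdot-y(t))\rangle=0$. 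The first task is the two-sided bound $\|g(t)\|_{H^1}\sim|\delta(t)|$. The upper bound $|\delta(t)|\lesssim\|g(t)\|_{H^1}$ is elementary: expand $\|u(t)\|_{\dot H^1_V}^2$ using \eqref{DecomU}, noting $\|Q(\cdot-y)\|_{\dot H^1}^2=\|Q\|_{\dot H^1}^2$, so that $\delta(t)=-2\langle\nabla g,\nabla Q(\cdot-y)\rangle-\|\nabla g\|_{L^2}^2-\int V|u|^2$, and each term is $O(\|g\|_{H^1})$ once one also controls $\int V|u|^2$ (see below). For the lower bound $\|g(t)\|_{H^1}\lesssim|\delta(t)|$, I would run the standard argument: write the conservation identities $M(u)=M(Q)$ and $E_V(u)=E_0(Q)$ in terms of $g$, use the Pohozaev identities \eqref{PoQ} to kill the linear-in-$g$ terms, and reduce to the quadratic form $\Phi(g)=\tfrac12\|\nabla g\|_{L^2}^2+\tfrac12\|g\|_{L^2}^2-\tfrac12\int 3Q^2(g_1)^2+Q^2(g_2)^2$ (the Hessian of $S_0$ at $Q$, written via $g=e^{-i\theta}(u-e^{i\theta}Q(\cdot-y))$), which by the orthogonality conditions \eqref{Ortogonality} and the classical spectral properties of $L_+=-\Delta+1-3Q^2$, $L_-=-\Delta+1-Q^2$ is coercive: $\Phi(g)\gtrsim\|g\|_{H^1}^2$. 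Combining $\Phi(g)\sim|\,\text{(energy defect)}\,|=O(|\delta(t)|+\int V|u|^2+\text{higher order in }g)$ and absorbing the cubic/quartic remainders for $\delta_0$ small gives the lower bound, provided $\int V|u|^2$ is itself $O(|\delta(t)|)$ — but since $E_0(u)=E_V(u)-\tfrac12\int V|u|^2=E_0(Q)-\tfrac12\int V|u|^2$ and $E_0(u)=E_0(Q)-\delta(t)+O(\|g\|^2)$ by the same expansion, one gets $\int V|u|^2=2\delta(t)+O(\|g\|^2)$, closing the loop.

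Next I would derive the modulation ODEs. Plugging \eqref{DecomU} into \eqref{NLS}, projecting onto $iQ(\cdot-y)$ and onto $\partial_{x_j}Q(\cdot-y)$, and differentiating the orthogonality relations \eqref{Ortogonality} in $t$, one obtains a linear system of the Schrödinger type for $(\theta'(t),y'(t))$ whose coefficient matrix is invertible (up to $O(\|g\|)$ corrections) and whose right-hand side consists of: (a) terms quadratic in $g$; (b) the genuinely new potential terms coming from $-a|x|^{-\mu}u$, namely inner products of $V(x)(g+Q(\cdot-y))$ against $Q(\cdot-y)$ and $\partial_{x_j}Q(\cdot-y)$. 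Since $Q$ has exponential decay, $\int V(x)Q^2(x-y)\,dx\lesssim a|y|^{-\mu}e^{-c|y|}$ and similarly for the $\partial_{x_j}Q$ pairing, and by Remark~\ref{Yinfinity} we may take $|y(t)|\ge R$ as large as we like; these potential contributions are therefore $\lesssim \frac{e^{-2|y(t)|}}{|y(t)|^2}$ (adjusting constants) plus terms $\lesssim\|g\|_{L^2}^2\lesssim|\delta(t)|^2$, and the remaining quadratic-in-$g$ terms are $O(\|g\|^2)\sim|\delta(t)|^2$. This yields $|y'(t)|\lesssim|\delta(t)|$ and an analogous estimate controlling $\theta'(t)$ modulo the main term. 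Together with the decay bound $\int V|u|^2=2\delta(t)+O(\|g\|^2)$ derived above (so $[\int V|u|^2]^{1/2}\lesssim|\delta(t)|^{1/2}$ — here I should double-check whether the paper actually wants $[\int V|u|^2]^{1/2}\lesssim|\delta(t)|$, which forces $\int V|u|^2\lesssim|\delta(t)|^2$; if so, one must instead argue $\int V|u|^2\lesssim \frac{e^{-2|y|}}{|y|^2}+\|g\|^2$ directly from pointwise decay of $u\approx Q(\cdot-y)$ plus the coercivity, which does give the stronger quadratic bound) and the exponential tail $\frac{e^{-2|y(t)|}}{|y(t)|^2}$, which bounds $\int V(x)Q^2(x-y)$ and hence is automatically $\lesssim|\delta(t)|$ after verifying it is dominated by the energy defect, we obtain \eqref{Estimatemodu}.

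Finally, for the refined decomposition $g=\alpha Q(\cdot-y)+h$ with $\alpha=\langle g_1(\cdot+y),\Delta Q\rangle/\langle Q,\Delta Q\rangle$: this is just the $L^2$-projection of $g_1$ onto the direction $\Delta Q$ (equivalently, along the generator of scaling), which is the one remaining "bad" direction for $L_+$ not already excluded by \eqref{Ortogonality}. By construction $h_1=g_1-\alpha Q(\cdot-y)$ is orthogonal to $\Delta Q(\cdot-y)$ and to $\partial_{x_j}Q(\cdot-y)$, and $h_2=g_2$ is orthogonal to $Q(\cdot-y)$; on this codimension-(1+3+1) subspace the full quadratic form $\Phi$ is coercive, $\Phi(g)\gtrsim\|h\|_{H^1}^2$, and since $\Phi(g)\sim|\delta(t)|$ by the first part we get $\|h(t)\|_{H^1}\lesssim|\delta(t)|^{1/2}$ — but to land on $\|h\|_{H^1}\sim|\delta(t)|$ one must extract the exact size of $\alpha$: expanding the conservation of energy to second order shows the quadratic form evaluated on $\alpha Q(\cdot-y)$ contributes $\sim\alpha\cdot\delta(t)$ at leading order (because $\langle L_+Q,Q\rangle=\langle -2Q,Q\rangle\neq0$ picks up the defect linearly), forcing $|\alpha(t)|\sim|\delta(t)|$ and then $\|h\|_{H^1}\sim|\delta(t)|$ from $\|g\|_{H^1}\sim|\delta(t)|$ and $\|h\|_{H^1}\le\|g\|_{H^1}+|\alpha|\|Q\|_{H^1}$ together with the reverse coercivity bound. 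The estimate $|\alpha'(t)|\lesssim|\delta(t)|$ then follows by differentiating the defining inner product in $t$, using the equation for $g$, the already-established bounds $|y'|,|\theta'\text{(minus main term)}|\lesssim|\delta|$, and the decay of the potential pairings against $\Delta Q(\cdot-y)$. The main obstacle is the bookkeeping in the second-order expansion of the conservation laws: one must carefully isolate which inner products vanish by the Pohozaev identities \eqref{PoQ} and the orthogonality conditions, and verify that every surviving term is either the desired linear-in-$\delta$ contribution, an exponentially small potential tail controlled via Remark~\ref{Yinfinity}, or a genuinely higher-order remainder absorbable for $\delta_0$ small — exactly as in \cite[Proposition~5.4 / Section~5]{MiaMurphyZheng2021} and \cite[Claim~5.4 and the modulation section]{DuyckaertsRou2010}, with the only new input being the exponential-decay estimates on $\int V(x)Q^2(x-y)$, $\int V(x)|\partial_{x_j}Q(x-y)|\,|Q(x-y)|$, and the closely related pairings.
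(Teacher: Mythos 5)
Your overall architecture (coercivity of the linearized action on the orthogonal complement supplied by Lemma~\ref{ExistenceF}, modulation ODEs from projecting the equation, second-order expansion of the conservation laws, and the pairing of the equation for $h$ against $Q(\cdot-y)$ to get $|\alpha'|\lesssim|\delta|$) is the same route the paper takes, which for \eqref{Estimatemodu}--\eqref{alfaE} simply defers to \cite[Proposition 5.1]{MiaMurphyZheng2021} and for \eqref{ultima} sketches exactly the computation you describe. However, there is one concrete step in your write-up that is false and would break the argument as written: the claim that $\int_{\R^3} V(x)Q^2(x-y)\,dx\lesssim a|y|^{-\mu}e^{-c|y|}$ and, relatedly, the proposed fallback that $\int V|u|^2\lesssim \tfrac{e^{-2|y|}}{|y|^2}+\|g\|^2$ follows ``directly from pointwise decay of $u\approx Q(\cdot-y)$.'' Here $V(x)=a|x|^{-\mu}$ decays only polynomially, and the mass of $Q^2(\cdot-y)$ concentrates near $x=y$ where $V(x)\approx a|y|^{-\mu}$; hence $\int V(x)Q^2(x-y)\,dx\sim |y|^{-\mu}$, which is \emph{not} exponentially small and is not dominated by $e^{-2|y|}/|y|^2$. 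Pointwise decay of $Q$ alone cannot give you $[\int V|u|^2]^{1/2}\lesssim|\delta|$.

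The correct mechanism — and the one needed both for the third term of \eqref{Estimatemodu} and for the potential contributions in your modulation ODEs — goes through the conservation laws rather than decay of $V$. From $E_V(u)=E_0(Q)$ one has $\int V|u|^2=2\big(E_0(Q)-E_0(u)\big)$; expanding $E_0$ around $Q(\cdot-y)$ and using $E_0'(Q)=-\Delta Q-Q^3=-Q$ together with mass conservation, which forces $\langle Q(\cdot-y),g_1\rangle=-\tfrac12\|g\|_{L^2}^2$, the linear term cancels and $\int V|u|^2=O(\|g\|_{H^1}^2)=O(|\delta|^2)$. This yields $[\int V|u|^2]^{1/2}\lesssim|\delta|$; the first term of \eqref{Estimatemodu} then follows because $\int V\,Q^2(\cdot-y)\lesssim \int V|u|^2+\int V|g|^2\lesssim|\delta|^2$ (Hardy for the second integral), while $\int V\,Q^2(\cdot-y)\gtrsim \int_{|x|\le1}Q^2(x-y)\,dx\gtrsim e^{-2|y|}/|y|^2$. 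With this substitution the potential terms in your ODE for $(\theta',y')$ are bounded by $\|V^{1/2}u\|_{L^2}\|V^{1/2}\partial_jQ(\cdot-y)\|_{L^2}\lesssim|\delta|^2$, and the rest of your outline (including the $\alpha$-decomposition, $|\alpha|\sim|\delta|$ from $\delta=-2\alpha\|\nabla Q\|_{L^2}^2+O(|\delta|^2)$, and the derivation of \eqref{ultima}) goes through as in the paper.
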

\begin{proof}
With Lemma~\ref{ExistenceF} and Remark~\ref{Yinfinity} in hand, the proof of \eqref{Estimatemodu} and \eqref{alfaE} is essentially the same as in \cite[Proposition 5.1]{MiaMurphyZheng2021}. 

The proof of estimate \eqref{ultima} is similar to that given in \cite[Lemma 4.3]{DuyckaertsRou2010} (see also \cite[Lemma 5.6]{MiaMurphyZheng2021}). Indeed, by \eqref{DecomU} we have
\[
h(t,x)=e^{-i\theta(t)}[u(t)-e^{i\theta(t)}(1+\alpha(t))Q(x-y(t))].
\]
Let $h=h_{1}+ih_{2}$. Then we have the following orthogonality relations 
(see proof of Lemma 5.4 in \cite{MiaMurphyZheng2021}),
\begin{equation}\label{OOR}
\<h_{1}, \Delta Q(\cdot-y) \>=\<h_{2}, Q(\cdot-y) \>=\<h_{1}, \partial_{j} Q(\cdot-y) \>=0
\end{equation}
for $j=1$, $2$, $3$. In particular, by \eqref{Estimatemodu}-\eqref{alfaE} we get 
$\<\partial_{ t}h_{1},  Q(\cdot-y) \>\lesssim |\delta(t)|$. 

Now, using the equation \eqref{NLS} and \eqref{ELS} we derive the equation
\begin{equation}\label{EFM}
\begin{split}
i\partial_{t}h+\Delta h-\theta^{\prime}h-Ve^{-i \theta}u
-\theta^{\prime}(1+\alpha)Q(x-y)+i\alpha^{\prime}Q(x-y)\\
+i(1+\alpha)y^{\prime}\cdot\nabla Q(x-y)+(1+\alpha)Q(x-y)
+f(e^{-i \theta}u)-(1+\alpha)f(Q(x-y))=0,
\end{split}
\end{equation}
where $f(z)=z|z|^{2}$. Then, multiplying Eq. \eqref{EFM} with $Q(\cdot-y)$, taking integral and imaginary part, by  estimates \eqref{Estimatemodu}-\eqref{alfaE}, it is not difficult to show that (see proof of Lemma 5.6 in \cite{MiaMurphyZheng2021})
\[
|\alpha^{\prime}(t)|\lesssim |\delta(t)|\quad \text{for all $t\in I_{0}$},
\]
which completes the proof of lemma.
\end{proof}

\section{Precluding the compact solution}\label{S:Impossibility}

Throughout this section we assume that $u$ is the solution constructed in Proposition \ref{Criticalsolution}.
In particular, $u$ satisfies \eqref{22condition}, $P_{V}(u_{0})\geq 0$ and
\[
\|u \|_{L_{t, x}^{5}([0, \infty)\times\R^{3})}=\infty.
\]
Moreover, 
\[
\text{$\left\{u(t, \cdot+x_{0}(t)):t\in [0, \infty)\right\}$ is pre-compact in $H^{1}(\R^{3})$.}
\]
From \eqref{GINequ} we see that 
\begin{equation}\label{deltapositive}
\delta(t):=\delta(u(t))>0 \quad \text{for all $t\in [0, \infty)$}.
\end{equation}

\begin{lemma}\label{Parametrization}
If $\delta_{0}$ is  small, then there exists a constant $C>0$ so that
\[
|x_{0}(t)-y(t)|<C \quad \text{for  $t\in I_{0}$}.
\]
Here, the parameter $y(t)$ is given in Proposition \ref{Modilation11}.
\end{lemma}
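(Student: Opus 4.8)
The plan is to exploit the two different ways the solution $u(t)$ is described near a time $t\in I_0$: on one hand, precompactness of $\{u(t,\cdot+x_0(t)):t\ge0\}$ in $H^1(\R^3)$ together with the vanishing of $\delta(t)$, and on the other hand the modulation decomposition $u(t,x)=e^{i\theta(t)}[g(t)+Q(x-y(t))]$ from Proposition \ref{Modilation11} with $\|g(t)\|_{H^1}\sim|\delta(t)|$. First I would argue by contradiction: suppose there is a sequence $\{t_n\}\subset I_0$ with $|x_0(t_n)-y(t_n)|\to\infty$. Since $|\delta(t_n)|<\delta_0$ can be made as small as we wish by shrinking the window, and since on $I_0$ we must have $\delta(t_n)\to 0$ along a suitable subsequence (or we can set up the contradiction so that this holds), the modulation estimate $\|g(t_n)\|_{H^1}\sim|\delta(t_n)|$ forces
\[
\big\|e^{-i\theta(t_n)}u(t_n,\cdot)-Q(\cdot-y(t_n))\big\|_{H^1}=\|g(t_n)\|_{H^1}\longrightarrow 0,
\]
so that, after translating by $y(t_n)$,
\[
\big\|e^{-i\theta(t_n)}u(t_n,\cdot+y(t_n))-Q\big\|_{H^1}\longrightarrow 0.
\]

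Next I would bring in precompactness. Up to a subsequence, $u(t_n,\cdot+x_0(t_n))\to\phi$ strongly in $H^1$ for some $\phi\in H^1(\R^3)\setminus\{0\}$ (nonzero because $\|u(t)\|_{\dot H^1_V}$ is bounded below by Lemma \ref{GlobalS}-type considerations, or more directly because $M(u)=M(Q)\ne0$). Combining the two convergences, the function $u(t_n,\cdot)$ is $H^1$-close both to the translate of a fixed nonzero profile $\phi$ by $x_0(t_n)$ and to the translate of the fixed nonzero profile $Q$ (modulated by $e^{i\theta(t_n)}$) by $y(t_n)$. Writing $z_n:=x_0(t_n)-y(t_n)$, this says
\[
\big\|\phi(\cdot-z_n)-e^{i\theta(t_n)}Q\big\|_{H^1}=o_n(1)+o_n(1)\longrightarrow 0
\]
after the appropriate change of variables. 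But $|z_n|\to\infty$, so $\phi(\cdot-z_n)\rightharpoonup 0$ weakly in $H^1$ (and in $L^2$), whereas $e^{i\theta(t_n)}Q$ has $L^2$-norm bounded away from zero — passing to a further subsequence so that $e^{i\theta(t_n)}\to e^{i\theta_\infty}$, the left-hand side would have to converge to $\|e^{i\theta_\infty}Q\|_{H^1}>0$, a contradiction.

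The main technical point to handle carefully is the passage from "$|x_0(t)-y(t)|$ not bounded" to "there is a subsequence $t_n$ along which $|x_0(t_n)-y(t_n)|\to\infty$ \emph{and} $\delta(t_n)\to0$": a priori $\delta(t)$ need not tend to $0$ on all of $I_0$, only stay below $\delta_0$. I would resolve this by first choosing $\delta_0$ small enough that Lemma \ref{LemmaMod} applies with a small target $\epsilon$, so that for \emph{every} $t\in I_0$ we already have $\|e^{-i\theta(t)}u(t,\cdot+y(t))-Q\|_{H^1}<\epsilon$ (not merely $o(1)$); running the argument above with a fixed small $\epsilon$ in place of $o_n(1)$ still yields, in the limit, $\|\phi(\cdot-z_n)-e^{i\theta(t_n)}Q\|_{H^1}\le C\epsilon$, while the weak-convergence lower bound gives $\|e^{i\theta_\infty}Q\|_{H^1}-C\epsilon\le C\epsilon$, i.e. $\|Q\|_{H^1}\le 2C\epsilon$, which is false once $\epsilon$ is chosen smaller than $\|Q\|_{H^1}/(2C)$. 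Thus no such sequence can exist, and $|x_0(t)-y(t)|$ is bounded on $I_0$. (One could alternatively invoke the known fact that the compactness modulus $x_0(t)$ can always be replaced by the modulation parameter $y(t)$ up to a bounded shift, but the self-contained weak-convergence argument above is cleanest.)
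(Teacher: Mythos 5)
Your argument is correct and is essentially the paper's (the paper simply defers to Lemma 4.2 of Miao--Murphy--Zheng, whose proof is exactly this: precompactness of $\{u(t,\cdot+x_0(t))\}$ plus the modulation approximation by $e^{i\theta(t)}Q(\cdot-y(t))$ forces the two spatial centers to stay a bounded distance apart, since otherwise the mass near $y(t_n)$ escapes the compactness window). Your final paragraph correctly repairs the only delicate point, namely that $\delta(t)$ need only be $<\delta_0$ on $I_0$ rather than tend to zero, so the closeness to $Q$ is a fixed small $\epsilon=\epsilon(\delta_0)$ rather than $o_n(1)$, which still suffices for the weak-convergence contradiction.
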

\begin{proof}
The proof is the same as the proof of \cite[Lemma 4.2]{MiaMurphyZheng2021}.
\end{proof}

From Lemma \ref{Parametrization} we infer that
\begin{equation}\label{CompacNew}
\text{$\left\{u(t, \cdot+x(t))\right\}$ is pre-compact in $H^{1}(\R^{3})$},
\end{equation}
where the spatial center $x(t)$ is given by
\[x(t)=
\begin{cases}
x_{0}(t)& \quad t\in [0, \infty)\setminus I_{0},\\
y(t)&\quad t\in I_{0}.
\end{cases}
\]

\begin{proposition}\label{Bounded11}
If the spacial center $x(t)$ is bounded, then $x(t)$ is unbounded.
\end{proposition}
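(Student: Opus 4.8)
The statement is a contradiction in disguise: asserting ``if $x(t)$ is bounded then $x(t)$ is unbounded'' is logically the same as asserting that $x(t)$ \emph{cannot} be bounded, so the plan is to assume, toward a contradiction, that $x(t)$ is bounded on $[0,\infty)$ and derive an absurdity. Once $x(t)$ is bounded, \eqref{CompacNew} upgrades to precompactness of the full orbit $\{u(t):t\ge 0\}$ in $H^{1}(\R^{3})$. Since $H^{1}\hookrightarrow L^{4}$ is continuous and $u\mapsto\sqrt{V}u$ is continuous from $H^{1}$ to $L^{2}$ by the Hardy inequality of Lemma~\ref{GHI} (here $1<\mu<2$), this yields uniform tightness of the tails: for every $\eta>0$ there is $R_{\eta}>0$ with
\[
\sup_{t\ge 0}\int_{|x|>R_{\eta}}\big(|\nabla u(t,x)|^{2}+|u(t,x)|^{2}+|u(t,x)|^{4}+V(x)|u(t,x)|^{2}\big)\,dx<\eta.
\]

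First I would prove a uniform lower bound $c_{0}:=\inf_{t\ge 0}P_{V}(u(t))>0$. By \eqref{deltapositive} we have $\delta(t)>0$, hence $P_{V}(u(t))>0$ for all $t$ by \eqref{PositiveP} of Lemma~\ref{GlobalS}. If the infimum were $0$, take $t_{n}$ with $P_{V}(u(t_{n}))\to 0$; by precompactness $u(t_{n})\to\phi$ in $H^{1}$ along a subsequence, with $\phi\ne 0$ since $M(\phi)=M(Q)>0$. The functionals $P_{V}$ and $S_{V}$ are continuous on $H^{1}(\R^{3})$ (again using Lemma~\ref{GHI} for the potential terms), so together with the conservation laws and \eqref{22condition} we get $P_{V}(\phi)=0$ and $S_{V}(\phi)=S_{0}(Q)=d_{a}$; thus $\phi$ attains $d_{a}$, contradicting Lemma~\ref{minimumS}. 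Hence $c_{0}>0$.

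Next I would run the truncated virial argument. By Lemma~\ref{VirialIden}, $\tfrac{d}{dt}I_{R}[u]=F_{R,V}[u(t)]$ and $F_{\infty,V}[u]=4P_{V}(u)$. Since $w_{R}$ and all its derivatives agree with those of $w_{\infty}(x)=|x|^{2}$ on $\{|x|\le R\}$, the difference $F_{R,V}[u(t)]-4P_{V}(u(t))$ is an integral over $\{|x|>R\}$ of $|\nabla u|^{2}$, $|u|^{4}$, $V|u|^{2}$, plus the terms $(\Delta\Delta w_{R})|u|^{2}$ and $(\nabla w_{R}-2x)\cdot\nabla V\,|u|^{2}$; the latter two are $O\!\big(R^{-2}\|u\|_{L^{2}}^{2}\big)$ and $O\!\big(R^{-\mu}\|u\|_{L^{2}}^{2}\big)$ because $x\cdot\nabla V=-\mu V$ and $|\partial^{\alpha}w_{R}|\lesssim R^{2-|\alpha|}$. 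Using the uniform tail bound, $\|u(t)\|_{L^{2}}^{2}=2M(Q)$, and choosing $\eta$ small and $R$ large, we obtain
\[
\sup_{t\ge 0}\big|F_{R,V}[u(t)]-4P_{V}(u(t))\big|<2c_{0},
\qquad\text{hence}\qquad F_{R,V}[u(t)]>2c_{0}>0\ \ \forall\,t\ge 0.
\]
Integrating gives $I_{R}[u(T)]-I_{R}[u(0)]\ge 2c_{0}T\to\infty$ as $T\to\infty$, while $|I_{R}[u(t)]|\lesssim\|\nabla w_{R}\|_{L^{\infty}}\|\nabla u(t)\|_{L^{2}}\|u(t)\|_{L^{2}}\lesssim_{R}\sup_{t}\|u(t)\|_{H^{1}}^{2}\lesssim_{R}1$ by Lemma~\ref{GlobalS}. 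This contradiction shows $x(t)$ cannot be bounded, which is the assertion.

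The main obstacle is the uniform-in-$t$ control making $F_{R,V}[u(t)]$ close to $4P_{V}(u(t))$ and the uniform positivity of $P_{V}(u(t))$; both hinge on turning precompactness of the recentered orbit into precompactness of the orbit itself, which is exactly where the boundedness hypothesis on $x(t)$ is consumed. The repulsivity $a>0$ (so $x\cdot\nabla V=-\mu V\le 0$) enters only to keep the potential contribution to $P_{V}$ of favourable sign and causes no difficulty here.
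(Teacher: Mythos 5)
Your proof is correct, but it takes a genuinely different route from the paper's. The paper never establishes a uniform lower bound on $P_{V}(u(t))$; instead it proves the coercivity estimate $F_{\infty,0}[u(t)]\geq c\,\delta(t)$ (Lemma~\ref{Compa}), derives from it the time-averaged virial bound \eqref{NewVirial}, and then, assuming $x(t)$ bounded, extracts a sequence $t_{n}\to\infty$ with $\delta(t_{n})\to 0$; Lemma~\ref{DeltaZero} (whose proof invokes the Duyckaerts--Roudenko classification for the free NLS) then forces $|x(t_{n})|\to\infty$, the desired contradiction. You instead exploit the boundedness of $x(t)$ immediately to upgrade \eqref{CompacNew} to precompactness of the un-recentered orbit, deduce $\inf_{t\geq 0}P_{V}(u(t))>0$ from the non-attainment of $d_{a}$ (Lemma~\ref{minimumS}) together with the $H^{1}$-continuity of $P_{V}$ and $S_{V}$ (via Lemma~\ref{GHI}), and close with a linear-growth truncated virial argument. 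Your key step is sound precisely because any limit point $\phi$ of $u(t_{n})$ with $P_{V}(u(t_{n}))\to 0$ would be a nonzero minimizer for $d_{a}$ with $P_{V}(\phi)=0$ and $S_{V}(\phi)=S_{0}(Q)=d_{a}$, which Lemma~\ref{minimumS} forbids; note that this is where the repulsivity $a>0$ truly enters (not merely through the sign of $x\cdot\nabla V$, as your closing remark suggests), since for $a=0$ the infimum is attained at $Q$ and your uniform positivity claim would fail. What your route buys is independence from the modulation analysis of Section~\ref{S:Modulation}, from Lemma~\ref{Compa}, and from the free-NLS threshold theory behind Lemma~\ref{DeltaZero}; what the paper's route buys is a quantitative estimate tracking $\sup_{t}|x(t)|$ that is structured in parallel with the treatment of the unbounded case in Proposition~\ref{NaoBounded}.
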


Proposition~\ref{Bounded11} will be a consequence of the following lemmas.

\begin{lemma}\label{SequenceInf}
For any time sequence $\left\{t_{n}\right\}\subset [0, \infty)$, we have
\begin{equation}\label{ZeroPoten}
|x(t_{n})|\to \infty \quad \text{if and only if}\quad \int_{\R^{3}}V(x)|u(t_{n},x)|^{2}dx\to 0.
\end{equation}
\end{lemma}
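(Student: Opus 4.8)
The plan is to exploit the pre-compactness of $\{u(t,\cdot+x(t)):t\ge 0\}$ in $H^1(\R^3)$ together with the decay of the potential $V(x)=a|x|^{-\mu}$ at infinity and its local integrability near the origin (here $1<\mu<2$ is exactly what makes $V\in L^1_{loc}+L^\infty$ outside any ball). I would argue both implications by contradiction through subsequences, using that along any subsequence the translated solutions converge strongly in $H^1$ to some limit $\varphi$.

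First I would prove the forward direction: assume $|x(t_n)|\to\infty$; I want $\int V|u(t_n)|^2\,dx\to 0$. Split the integral into $|x|\le L$ and $|x|\ge L$. On $|x|\ge L$ one has $V(x)\le aL^{-\mu}$, so that piece is $\lesssim L^{-\mu}\|u(t_n)\|_{L^2}^2\lesssim_Q L^{-\mu}$, uniformly in $n$, hence small for $L$ large. For the piece on $|x|\le L$, change variables $y=x-x(t_n)$ to write it as $\int_{|y+x(t_n)|\le L} V(y+x(t_n))|u(t_n,y+x(t_n))|^2\,dy$; since $|x(t_n)|\to\infty$, the region $\{|y+x(t_n)|\le L\}$ recedes to spatial infinity in $y$, and on that region $V(y+x(t_n))\le a(|x(t_n)|-L)^{-\mu}\to 0$. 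Combining with the uniform $L^2$ bound on $u(t_n,\cdot+x(t_n))$ (equivalently on $u(t_n)$), this piece vanishes as $n\to\infty$; letting $L\to\infty$ afterwards finishes this direction. (One can also phrase it via pre-compactness: the tails $\int_{|y|\ge\rho}|u(t_n,y+x(t_n))|^2\,dy$ are uniformly small, and $V(y+x(t_n))$ is uniformly small for $|y|\le\rho$ once $|x(t_n)|$ is large — both routes work.)

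For the converse, suppose $\int V|u(t_n)|^2\,dx\to 0$ but, passing to a subsequence, $|x(t_n)|$ stays bounded, say $|x(t_n)|\le M$. By pre-compactness (along a further subsequence) $u(t_n,\cdot+x(t_n))\to\varphi$ strongly in $H^1$, with $\varphi\ne 0$ since $\|\varphi\|_{L^2}^2=M(u_0)=M(Q)>0$. Now I would show $\int V|u(t_n)|^2\,dx\to\int V|\varphi(\cdot-x_\infty)|^2\,dx>0$ (up to a further subsequence making $x(t_n)\to x_\infty$), which contradicts the hypothesis. The convergence of the potential term uses Hardy's inequality (Lemma~\ref{GHI}): $\int V|f|^2\lesssim \||\nabla|^{\mu/2}f\|_{L^2}^2$ controls the near-origin singularity, while away from the origin $V$ is bounded; splitting into $|x|\le\eta$ and $|x|\ge\eta$ and using strong $H^1$-convergence plus $V\in L^\infty(\{|x|\ge\eta\})$ gives the limit, with the $|x|\le\eta$ contribution made uniformly small by Hardy applied to $u(t_n)$ minus a fixed smooth truncation. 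Strict positivity of $\int V|\varphi(\cdot-x_\infty)|^2$ holds because $\varphi\ne 0$ and $V>0$ a.e. Together the two directions prove \eqref{ZeroPoten}.

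The main obstacle is the near-origin behavior of $V$ in the converse direction: one cannot treat $V$ as bounded, so the argument genuinely needs Lemma~\ref{GHI} to dominate $\int_{|x|\le\eta}V|u(t_n)-u(t_m)|^2\,dx$ (or $\int_{|x|\le\eta}V|u(t_n)-\varphi(\cdot-x(t_n))|^2$) by a small multiple of the $H^1$-difference, uniformly in $n$; the forward direction and the boundedness contradiction are then routine given the uniform $H^1$ bound from Lemma~\ref{GlobalS} and pre-compactness.
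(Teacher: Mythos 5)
Your overall strategy---pre-compactness of $\{u(t,\cdot+x(t))\}$ in $H^1$ combined with Hardy's inequality to tame the singularity of $V$---is exactly the one the paper intends (its proof is the single line ``with Lemma~\ref{GHI} in hand, the proof is the same as in \cite[Lemma 4.3]{MiaMurphyZheng2021}''), and your converse direction is a correct implementation of it. However, your forward direction contains a step that fails. On the region $\{|y+x(t_n)|\le L\}$ (equivalently $|x|\le L$) you claim $V(y+x(t_n))\le a(|x(t_n)|-L)^{-\mu}$; but on that region $|y+x(t_n)|\le L$, so $V(y+x(t_n))=a|y+x(t_n)|^{-\mu}\ge aL^{-\mu}$, and it is in fact unbounded near $y=-x(t_n)$ --- this is precisely the piece of space containing the singularity of $V$. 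What is true there is the \emph{opposite} inequality $|y|\ge |x(t_n)|-L$, i.e.\ this region lies in the far tail relative to the compactness center. Consequently your closing assertion that the forward direction is ``routine'' and that Lemma~\ref{GHI} is needed only for the converse is wrong: to control $\int_{|x|\le L}V|u(t_n)|^2\,dx$ you cannot use a pointwise bound on $V$, and smallness of the $L^2$ mass of the tail alone does not beat an unbounded weight. Your parenthetical alternative has the same gap: it pairs a small $L^2$ tail with a potential that is not bounded on that tail.

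The repair uses only ingredients you already have. Pre-compactness in $H^1$ gives $\sup_n\|u(t_n,\cdot+x(t_n))\|_{H^1(\{|y|\ge\rho\})}\to0$ as $\rho\to\infty$, so for each fixed $L$ one has $\|\chi_L u(t_n)\|_{H^1}\to0$ as $n\to\infty$, where $\chi_L$ is a smooth cutoff to $\{|x|\le 2L\}$ (this set eventually lies in $\{|x-x(t_n)|\ge\rho\}$ for any fixed $\rho$). Then either Lemma~\ref{GHI} with $p=2$ together with the interpolation $\||\nabla|^{\mu/2}f\|_{L^2}\lesssim\|f\|_{L^2}^{1-\mu/2}\|\nabla f\|_{L^2}^{\mu/2}$, or H\"older with $V\in L^{3/2}(\{|x|\le L\})$ (valid since $3\mu/2<3$) against $\|u(t_n)\|_{L^6(\{|x|\le L\})}^2$, yields $\int_{|x|\le L}V|u(t_n)|^2\,dx\to0$. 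Combined with your correct bound $aL^{-\mu}\cdot 2M(Q)$ on $\{|x|\ge L\}$, this closes the forward direction; the converse direction and the use of strong $H^1$ convergence to a nonzero limit are fine as written.
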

\begin{proof}
With Lemma~\ref{GHI} in hand, the proof is the same as in \cite[Lemma 4.{{3}}]{MiaMurphyZheng2021}.
\end{proof}

\begin{lemma}\label{DeltaZero}
Suppose  $t_{n}\to \infty$. Then
\begin{equation}\label{equivalence}
|x(t_{n})|\to \infty \quad \text{if and only if}\quad \delta(t_{n})\to 0.
\end{equation}
\end{lemma}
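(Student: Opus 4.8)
The plan is to prove both implications via a compactness-and-contradiction argument, using the precompactness of $\{u(t,\cdot+x(t)):t\in[0,\infty)\}$ in $H^{1}(\R^{3})$ together with Lemma \ref{SequenceInf} and the variational characterization of $Q$. First, suppose $\delta(t_{n})\to 0$. Since $u$ satisfies \eqref{22condition}, we have $E_{V}(u(t_{n}))=E_{0}(Q)$ and $M(u(t_{n}))=M(Q)$, while $\delta(t_{n})=\|Q\|_{\dot H^{1}}^{2}-\|u(t_{n})\|_{\dot H^{1}_{V}}^{2}\to 0$. From $E_{V}(u(t_{n}))=E_{0}(Q)$ one reads off that the quantity $\int_{\R^{3}}V(x)|u(t_{n},x)|^{2}\,dx$ is controlled by $\delta(t_{n})$ plus a difference of $L^4$-norms; more precisely, using the Pohozaev identities \eqref{PoQ} and the nonnegativity of the potential term, $\delta(t_{n})\to 0$ forces $\|u(t_{n})\|_{L^4}^4\to\|Q\|_{L^4}^4$ and $\int V|u(t_{n})|^{2}\to 0$ (this is essentially the same bookkeeping used in Remark \ref{Yinfinity}). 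By Lemma \ref{SequenceInf}, the vanishing of the potential energy is equivalent to $|x(t_{n})|\to\infty$, which gives the forward implication.

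For the reverse implication, suppose $|x(t_{n})|\to\infty$. By precompactness \eqref{CompacNew}, after passing to a subsequence there is $\phi\in H^{1}(\R^{3})$ with $u(t_{n},\cdot+x(t_{n}))\to\phi$ strongly in $H^{1}$. By Lemma \ref{SequenceInf}, $|x(t_{n})|\to\infty$ gives $\int V|u(t_{n})|^{2}\to 0$, and since $V\geq 0$ and the convergence is in $H^{1}$ (hence in $L^{2}(\sqrt{V}\,dx)$ by Lemma \ref{GHI}, using $|x(t_{n})|\to\infty$ to push the mass of $\phi$ out of any fixed ball), the limit $\phi$ satisfies $E_{0}(\phi)=\lim E_{V}(u(t_{n}))=E_{0}(Q)$, $M(\phi)=M(Q)$, and $\|\phi\|_{\dot H^{1}}^{2}=\lim\|u(t_{n})\|_{\dot H^{1}_{V}}^{2}\leq\|Q\|_{\dot H^{1}}^{2}$ by \eqref{GINequ}. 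Thus $\phi$ realizes equality in the mass-energy threshold with $\|\phi\|_{\dot H^{1}}\leq\|Q\|_{\dot H^{1}}$, which by the sharp Gagliardo--Nirenberg inequality \eqref{GI} and the characterization of its optimizers forces $\delta(\phi)=\|Q\|_{\dot H^{1}}^{2}-\|\phi\|_{\dot H^{1}}^{2}=0$; indeed $E_{0}(\phi)=E_0(Q)$ with $M(\phi)=M(Q)$ and $\|\nabla\phi\|_{L^{2}}\le\|\nabla Q\|_{L^{2}}$ pins down $\|\nabla\phi\|_{L^2}=\|\nabla Q\|_{L^2}$. Since $\delta$ is continuous under $H^{1}$-convergence and translation-invariant, $\delta(t_{n})=\delta(u(t_{n},\cdot+x(t_{n})))\to\delta(\phi)=0$.

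The step I expect to be the main obstacle is the reverse implication, specifically justifying that the $H^{1}$-limit $\phi$ of the recentered solution has vanishing potential energy and hence satisfies $E_0(\phi)=E_0(Q)$: one must combine the spatial escape $|x(t_n)|\to\infty$ with the uniform $H^1$-bound (Lemma \ref{GlobalS}) and the Hardy-type control of Lemma \ref{GHI} to show $\int V(x)|u(t_n,x)|^2\,dx\to 0$ genuinely reflects a loss of potential energy in the limit rather than a concentration artifact — this is exactly where Lemma \ref{SequenceInf} does the heavy lifting. Once that is in place, the rigidity statement ``$E_0(\phi)=E_0(Q)$, $M(\phi)=M(Q)$, $\|\nabla\phi\|_{L^2}\le\|\nabla Q\|_{L^2}$ $\Rightarrow$ $\delta(\phi)=0$'' is a direct consequence of the sharp Gagliardo--Nirenberg inequality and \eqref{PoQ}, and the continuity of $\delta$ finishes the argument. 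It may be cleaner to package both directions through the single observation, valid under \eqref{22condition}, that $\int V|u(t)|^2\,dx$ and $\delta(t)$ are each comparable (up to higher-order terms) to the common defect $\|Q\|_{L^4}^4-\|u(t)\|_{L^4}^4\ge 0$, at which point Lemma \ref{SequenceInf} makes the equivalence transparent.
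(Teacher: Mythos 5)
Your forward implication ($\delta(t_{n})\to 0\Rightarrow|x(t_{n})|\to\infty$) is fine: the paper gets the vanishing of $\int V|u(t_{n})|^{2}\,dx$ directly from the modulation bound \eqref{Estimatemodu} and then applies Lemma~\ref{SequenceInf}, while your Pohozaev/Gagliardo--Nirenberg bookkeeping reaches the same conclusion.

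The reverse implication, however, contains a genuine gap: the rigidity statement you lean on is false. From $M(\phi)=M(Q)$, $E_{0}(\phi)=E_{0}(Q)$ and $\|\nabla\phi\|_{L^{2}}\le\|\nabla Q\|_{L^{2}}$ one \emph{cannot} conclude $\|\nabla\phi\|_{L^{2}}=\|\nabla Q\|_{L^{2}}$. The sharp inequality \eqref{GI} gives $E_{0}(\phi)\ge g\left(\|\nabla\phi\|_{L^{2}}\right)$ with $g(y)=\tfrac12 y^{2}-\tfrac14 C_{GN}\|Q\|_{L^{2}}\,y^{3}$, and by \eqref{PoQ}--\eqref{C_GN} this $g$ attains its \emph{maximum}, equal to $E_{0}(Q)$, exactly at $y=\|\nabla Q\|_{L^{2}}$. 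Hence the constraint $E_{0}(\phi)=E_{0}(Q)$ reads $g\left(\|\nabla\phi\|_{L^{2}}\right)\le E_{0}(Q)$, which every $y$ satisfies: it pins down nothing. There are many functions at the exact mass--energy threshold with gradient norm strictly below $\|\nabla Q\|_{L^{2}}$ (the Duyckaerts--Roudenko solution $Q^{-}$ furnishes one at every time); equality in \eqref{GI} would force $\phi=Q$ up to symmetries, but nothing in your hypotheses gives equality in \eqref{GI}. A telltale sign is that your argument never uses $\|u\|_{L^{5}_{t,x}([0,\infty)\times\R^{3})}=\infty$: the reverse implication is a dynamical statement, not a variational one. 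The paper extracts the $H^{1}$ limit $v_{0}$ exactly as you do and records $M(v_{0})=M(Q)$, $E_{0}(v_{0})=E_{0}(Q)$, $\|\nabla v_{0}\|_{L^{2}}^{2}<\|\nabla Q\|_{L^{2}}^{2}$ (strict, from the assumed lower bound $\delta(t_{n})\ge c>0$), but then invokes the Duyckaerts--Roudenko classification of threshold solutions \cite[Theorem 3]{DuyckaertsRou2010} to conclude that the \emph{free} NLS evolution of $v_{0}$ is global and scatters in at least one time direction; since $|x(t_{n})|\to\infty$, that free solution embeds as an approximate solution of \eqref{NLS}, and the stability result (Lemma~\ref{StabilityNLS}) yields $\|u\|_{L^{5}_{t,x}([t_{n},\infty)\times\R^{3})}\lesssim 1$ (or its backward-in-time analogue), contradicting \eqref{Blowup}. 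That dynamical input is essential and cannot be replaced by the Gagliardo--Nirenberg argument you propose.
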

\begin{proof}
If $\delta(t_{n})\to 0$, then combining  \eqref{ZeroPoten} and estimate \eqref{Estimatemodu}
we see that $|x(t_{n})|\to \infty$.

Next, let $t_{n}\to \infty$ and assume by contradiction that $|x(t_{n})|\to \infty$ but, 
possibly for a subsequence only,
\begin{equation}\label{Zeroplus}
\delta(u(t_{n}))\geq c>0.
\end{equation}

As $\left\{u(t_{n}, \cdot+x(t_{n}))\right\}$ is pre-compact in $H^{1}(\R^{3})$, we have that there exists
$v_{0}\in H^{1}(\R^{3})$ so that 
\begin{equation}\label{CompactV}
u(t_{n}, \cdot+x(t_{n})) \to v_{0} \quad \text{in}\quad H^{1}(\R^{3}),
\end{equation}
along some subsequence in $n$. In particular, since $|x(t_{n})|\to \infty$, it follows from \eqref{Zeroplus} and \eqref{ZeroPoten}, 
\[
M(v_{0})=M(Q), \quad E_{0}(v_{0})=E_{0}(Q) \quad \text{and}\quad \|\nabla v_{0}\|^{2}_{L^{2}}<\|\nabla Q\|^{2}_{L^{2}}.
\]
An application of \cite[Theorem 3]{DuyckaertsRou2010}  implies that the solution $v$ of the free NLS on $\R^{3}$ (i.e., \eqref{NLS} with $a=0$) with initial data  $v_{0}$ is global and either scatters as $t\to \infty$ or as $t\to -\infty$ (or both). 

Suppose that $v$ scatters as $t \to \infty$. As $|x(t_{n})|\to \infty$,  we can use a similar 
argument  as in  \cite[Lemma 4.4]{MiaMurphyZheng2021} to find  a solution $v_{n}$ to \eqref{NLS} so that
\[
v_{n}(0)= v_0(\cdot -x(t_n)) \qtq{and} \|v_{n}\|_{L_{t,x}^5([0,\infty)\times\R^{3})}\lesssim 1
\]
for large $n$. Notice that by \eqref{CompactV} we get $\| u(t_n,x) - {v}_n(0)\|_{H^1} \to 0$ as $t\to \infty$.
Then the stability result (cf. Lemma~\ref{StabilityNLS}) applies and  
\[
\|u(t_n+t)\|_{L_{t,x}^5([0,\infty)\times\R^{3})}
 = \|u\|_{L_{t,x}^5([t_n,\infty)\times\R)} \lesssim 1
\]
for large $n$, which contradicts that the $L_{t,x}^5$-norm of $u$ is infinite.

Next, uppose that $v$ scatters as $t \to -\infty$. An argument similar to the one developed above shows that
\[\|u(t_n+t)\|_{L_{t,x}^5((-\infty,0]\times\R)} = \|u\|_{L_{t,x}^5((-\infty,t_n]\times\R)} \lesssim 1
\] for large $n$. This also contradicts that $u$ does not scatter. 
Therefore, $\delta(u(t_n)) \to 0$ as $n \to \infty$. This completes the proof of lemma.
\end{proof}

Recall that {{$F_{\infty,0}$}} is defined in Lemma \ref{VirialIden}. We have the following result.
\begin{lemma}\label{Compa}
Fix $a>0$.
There exists $c>0$ so that
\begin{equation}\label{InequeN}
{{F_{\infty,0}}}[u(t)]=8\|\nabla u(t)\|^{2}_{L^{2}}-6\|u(t)\|^{4}_{L^{4}}\geq c\delta(t).
\end{equation}
\end{lemma}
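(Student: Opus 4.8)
The plan is to prove the pointwise coercivity estimate $F_{\infty,0}[u(t)] = 8\|\nabla u(t)\|_{L^2}^2 - 6\|u(t)\|_{L^4}^4 \geq c\,\delta(t)$ by treating separately the regime where $\delta(t)$ is small and the regime where it is bounded away from zero. First I would recall that $F_{\infty,0}[u] = 8\|\nabla u\|_{L^2}^2 - 6\|u\|_{L^4}^4 = 4P_{V}(u) - 4\mu\int a|x|^{-\mu}|u|^2 + (\text{lower order from }P_V)$; more directly, since $F_{\infty,0}$ is the $a=0$ virial and $P_V(u(t)) > 0$ on $[0,\infty)$ by \eqref{PositiveP}, while $F_{\infty,0}[u] = 4P_V(u) - 2\mu\int a|x|^{-\mu}|u|^2\,dx \geq -2\mu\int a|x|^{-\mu}|u|^2\,dx$ will not suffice on its own — so the real content is the Gagliardo--Nirenberg gap estimate near the ground state.

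\emph{Small-}$\delta(t)$ \emph{regime.} On $I_0$ (i.e.\ when $|\delta(t)| < \delta_0$) I would use the modulation decomposition of Proposition \ref{Modilation11}: $u(t) = e^{i\theta(t)}[g(t) + Q(\cdot - y(t))]$ with $\|g(t)\|_{H^1} \sim |\delta(t)| = \delta(t)$ and $g = \alpha Q(\cdot-y) + h$ with the orthogonality relations \eqref{OOR}. Expanding $8\|\nabla u\|_{L^2}^2 - 6\|u\|_{L^4}^4$ around $Q$ and using the Pohozaev identities \eqref{PoQ} (which give $8\|\nabla Q\|_{L^2}^2 = 6\|Q\|_{L^4}^4$, so the zeroth-order term vanishes), the linear term in $g$ vanishes by the choice of $\alpha$ and the orthogonality conditions, and one is left with a quadratic form in $g$ plus $O(\|g\|_{H^1}^3)$. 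The key point — essentially the linearized-operator spectral analysis around $Q$, as used in \cite{DuyckaertsRou2010} and \cite{MiaMurphyZheng2021} — is that this quadratic form, restricted to the space satisfying the orthogonality conditions \eqref{OOR}, together with the constraints $M(u)=M(Q)$, $E_V(u)=E_0(Q)$, is bounded below by a positive multiple of $\delta(t)$. Concretely one shows $8\|\nabla u\|_{L^2}^2 - 6\|u\|_{L^4}^4 \gtrsim \alpha(t) \gtrsim \delta(t)$, using $|\alpha(t)| \sim |\delta(t)|$ from \eqref{alfaE} and the sign information $\delta(t) > 0$ from \eqref{deltapositive}; the inverse-power potential terms only contribute at order $\int V|u|^2 \lesssim \delta(t)^2$ by \eqref{Estimatemodu}, hence are absorbed.

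\emph{Bounded-away-from-zero regime.} When $\delta(t) \geq \delta_0$, I would argue by compactness/contradiction. Suppose \eqref{InequeN} fails along a sequence $t_n$; then $F_{\infty,0}[u(t_n)]/\delta(t_n) \to 0$ with $\delta(t_n) \geq \delta_0$, and since $\delta(t) \leq \|Q\|_{\dot H^1}^2$ the ratio tending to $0$ forces $F_{\infty,0}[u(t_n)] \to 0$. By precompactness \eqref{CompacNew} of $\{u(t_n, \cdot + x(t_n))\}$, pass to a limit $v_0$ in $H^1$ with $8\|\nabla v_0\|_{L^2}^2 = 6\|v_0\|_{L^4}^4$, and (after splitting on whether $|x(t_n)|\to\infty$, using Lemma \ref{SequenceInf} to control the potential) $M(v_0) = M(Q)$ and $E_0(v_0) \leq E_0(Q)$ with $\|\nabla v_0\|_{L^2}^2 \leq \|\nabla Q\|_{L^2}^2$ by \eqref{GINequ}; the sharp Gagliardo--Nirenberg inequality \eqref{GI} then forces equality in \eqref{GI}, so $v_0$ is a ground state up to symmetries, giving $\delta(v_0) = 0$, i.e.\ $\delta(t_n) \to 0$, contradicting $\delta(t_n) \geq \delta_0$. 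Combining the two regimes and adjusting $c$ yields the claim. I expect the main obstacle to be the small-$\delta$ coercivity of the quadratic form: one must verify that the orthogonality conditions \eqref{OOR} together with the two scalar constraints $M(u)=M(Q)$, $E_V(u)=E_0(Q)$ place $g(t)$ in the positive subspace of the linearized action, handling the negative and zero eigendirections of $L_+ \oplus L_-$; this is where the arguments of \cite[Section 5]{MiaMurphyZheng2021} and \cite{DuyckaertsRou2010} are invoked, and where the potential contributes harmless higher-order corrections via \eqref{Estimatemodu}.
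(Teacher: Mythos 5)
Your large-$\delta$ compactness argument is sound and is essentially the mechanism the paper uses (the paper does not extract a limit profile; it combines the sharp Gagliardo--Nirenberg inequality with the lower bound $\|\nabla u(t)\|_{L^2}^2\gtrsim M(u_0)$ supplied by compactness to force $\|\nabla u(t_n)\|_{L^2}\to\|\nabla Q\|_{L^2}$, but the content is the same). The genuine gap is in your small-$\delta$ regime. You assert that after modulating, ``the linear term in $g$ vanishes'' and that the bound follows from coercivity of the resulting quadratic form under the orthogonality conditions \eqref{OOR}. This cannot produce \eqref{InequeN}: a quadratic form evaluated on $g$ with $\|g\|_{H^1}\sim\delta(t)$ is of size $O(\delta(t)^2)$ and can never dominate $c\,\delta(t)$. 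In fact the linear term does \emph{not} vanish---it is precisely what produces the lower bound. One has $dF_{\infty,0}[Q](g_1)=\langle 8\Delta Q-24Q,\,g_1\rangle$, and using the mass constraint (which gives $\langle Q,g_1\rangle=-\tfrac12\|g\|_{L^2}^2=O(\delta^2)$) together with $\langle h_1,\Delta Q\rangle=0$ one finds $F_{\infty,0}[u]=4\delta(t)+O(\delta(t)^2)$; no spectral information about $L_{+}\oplus L_{-}$ enters at any point. Your parenthetical claim ``$F_{\infty,0}\gtrsim\alpha\gtrsim\delta$'' is a first-order (linear) bound and directly contradicts your vanishing-linear-term claim; this inconsistency is the symptom of the gap.

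The paper's route bypasses the expansion entirely. The two conservation laws $M(u)=M(Q)$, $E_V(u)=E_0(Q)$ combined with the Pohozaev identities \eqref{PoQ} yield the exact algebraic identity
\begin{equation*}
F_{\infty,0}[u(t)]=4\delta(t)-8\int_{\R^{3}}V(x)|u(t,x)|^{2}\,dx \qquad\text{for all } t,
\end{equation*}
with no approximation. The lemma then reduces to showing that along a putative bad sequence $F_{\infty,0}[u(t_n)]\leq\tfrac1n\delta(t_n)$ one necessarily has $\delta(t_n)\to0$ (this is where sharp Gagliardo--Nirenberg and compactness enter, exactly as in your second regime), after which the modulation estimate \eqref{Estimatemodu} gives $\int V|u(t_n)|^2\,dx\lesssim\delta(t_n)^2\leq\tfrac14\delta(t_n)$, hence $F_{\infty,0}[u(t_n)]\geq2\delta(t_n)$, a contradiction. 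You correctly identified that the potential contributes only at order $\delta^2$ via \eqref{Estimatemodu}---that is indeed the only use of modulation in the proof---but you missed the exact identity that makes any linearized/spectral analysis unnecessary, and as written your small-$\delta$ step does not close.
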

\begin{proof}
Suppose \eqref{InequeN} is false. Then there exists  $\left\{t_{n}\right\}_{n\in\N}$ such that
\begin{equation}\label{Contra22}
{{F_{\infty,0}}}[u(t_{n})]\leq \tfrac{1}{n}\delta(t_{n}).
\end{equation}
Notice that $\left\{\delta(t_{n})\right\}$ is bounded (cf. Lemma~\ref{GlobalS}). We claim that
\[
\delta(t_{n})\to 0 \quad \text{as $n\to\infty$}.
\]
Indeed, by using the Pohozaev's identities and $E_{V}(u(t_{n}))=E_{0}(Q)$ we have (cf. \eqref{PoQ})
\begin{equation}\label{Pohi22}
\|  Q \|^{4}_{L^{4}}-\|u(t_{n}) \|^{4}_{L^{4}}=2\delta({t_n}).
\end{equation}
Thus, as ${{F_{\infty,0}}}[Q]=0$, we get
\begin{equation}\label{Aux11}
{{F_{\infty,0}}}[u(t_{n})]
=4\delta(t_{n})-8\int_{\R^{3}}V(x)|u(t_{n}, x)|^{2}dx.
\end{equation}
By using sharp Gagliardo-Nirenberg inequality \eqref{GI}, \eqref{PoQ} and \eqref{GINequ} we deduce  
\begin{align*}
\|u(t_{n})\|^{4}_{L^{4}}&\leq  C_\text{GN}\|u\|_{L^2}\|\nabla u(t_n)\|_{L^2}^3\\
&\leq \frac{\| \nabla u(t_{n})\|_{L^{2}}}{\|\nabla Q\|_{L^{2}}}\cdot\frac{\|  Q \|^{4}_{L^{4}}}{\|\nabla Q\|^{2}_{L^{2}}}
\cdot\| \nabla u(t_{n})\|^{2}_{L^{2}}\\
&<\tfrac{4}{3}\|\nabla u(t_{n})\|^{2}_{L^{2}}.
\end{align*}
Since $F_{\infty,0}[u(t_n)] \rightarrow 0$ as $n \rightarrow \infty$ we have
\begin{align*}
	0
		\leftarrow \|\nabla u(t_n)\|_{L^2}^2\left(\tfrac{4}{3} - C_\text{GN}\|u\|_{L^2}\|\nabla u(t_n)\|_{L^2}\right)
		= \tfrac{4}{3}\|\nabla u(t_n)\|_{L^2}^2\left(1 - \frac{\|\nabla u(t_n)\|_{L^2}}{\|\nabla Q\|_{L^2}}\right)
\end{align*}
as $n \rightarrow \infty$.
Here, it follows from compactness of $u$ that there exists a positive constant $A > 0$ such that $A \cdot M(u_0) \leq \|\nabla u(t)\|_{L^2}^2$ for each $t \in \mathbb{R}$ (e.g. see \cite[Lemma 6.6]{Xu}).
So, we obtain
\begin{align*}
	\|\nabla u(t_n)\|_{L^2}
		\rightarrow \|\nabla Q\|_{L^2}
\end{align*}
as $n \rightarrow \infty$.

In particular,
\begin{equation}\label{Secondelta}
\delta(t_{n})=-\int_{\R^{3}}V(x)|u(t_{n}, x)|^{2}dx+o(1) \quad \text{as $n\to \infty$}.
\end{equation}
Combining \eqref{Aux11} and \eqref{Secondelta} we obtain the claim.

Finally, by Proposition \ref{Modilation11} we have
\[
\int_{\R^{3}}V(x)|u(t_{n}, x)|^{2}dx\lesssim \delta(t_{n})^{2}\leq \tfrac{1}{4}\delta(t_{n})\quad \text{for $n$ large}.
\]
Thus, by using \eqref{Aux11} and \eqref{Contra22} we get
\[
2\delta(t_{n})\leq \tfrac{1}{n}\delta(t_{n})\quad \text{for $n$ large},
\]
which is a contradiction with \eqref{deltapositive}.
\end{proof}

\begin{proof}[{Proof of Proposition \ref{Bounded11}}]
The proof is divided into 2 steps.

\textsl{Step 1.} Virial estimate. Let $T>0$ and $\epsilon>0$, then there exists  $\rho_{\epsilon}=\rho(\epsilon)>0$ such that
\begin{equation}\label{NewVirial}
\int^{T}_{0}\delta(t)dt\lesssim\epsilon T+
[\rho_{\epsilon}+\sup_{t\in [0,T]}|x(t)|]\|u\|_{L^{\infty}_{t}H^{1}_{x}}^{2}.
\end{equation}
The proof of Step 1 is the same as for \cite[Lemma 4.7]{MiaMurphyZheng2021}, using our Lemma~\ref{Compa} instead of Lemma 4.5 of their paper.

\textsl{Step 2.} Conclusion. We argue by contradiction. If the spatial center $x(t)$ is bounded, then by Step 1 above we have
\[
\tfrac{1}{T}\int^{T}_{0}\delta(t)dt\lesssim\epsilon
+\tfrac{1}{T}\rho_{\epsilon}\quad \text{for all $T>0$}, 
\]
and for any $\epsilon>0$. Consider a sequence $\epsilon_{n}\to 0$ as $n\to \infty$. By 
choose appropriately times $T_{n}\to \infty$, an application of the mean value theorem for integrals
implies that there exists a time sequence $t_{n}\to \infty$ 
such that that $\delta(t_{n})\to 0$ as $n\to \infty$ (recall that  $\delta(t)> 0$ for $t\geq 0$).
But then Lemma \ref{DeltaZero} implies that $|x(t_{n})|\to \infty$, which is a contradiction. 
This completes the proof of proposition.
\end{proof}

\begin{proposition}\label{NaoBounded}
If the spacial center $x(t)$ is unbounded, then $x(t)$ is bounded.
\end{proposition}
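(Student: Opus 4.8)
The plan is to rule out the unbounded case by extracting, from the precompactness of the trajectory, a sequence along which the potential energy vanishes, hence (via Lemma~\ref{SequenceInf} and Lemma~\ref{DeltaZero}) along which $\delta(t_n)\to 0$, and then to combine a localized virial argument with the modulation bounds of Proposition~\ref{Modilation11} to derive a contradiction with $\delta(t)>0$ on $[0,\infty)$. In spirit this mirrors the argument for Proposition~\ref{Bounded11}, but now the mechanism forcing $\int_0^T\delta(t)\,dt$ to be small is the unboundedness of $x(t)$ rather than its boundedness.

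First I would record the basic dichotomy consequences. Since $\{u(t,\cdot+x(t))\}$ is precompact in $H^1$ and $x(t)$ is unbounded, there is a sequence $t_n\to\infty$ (after noting that if $x$ stayed bounded on $[T,\infty)$ we could reduce to Proposition~\ref{Bounded11}) with $|x(t_n)|\to\infty$; by Lemma~\ref{SequenceInf} this gives $\int V|u(t_n)|^2\,dx\to 0$, and by Lemma~\ref{DeltaZero} it gives $\delta(t_n)\to 0$. Next I would set up the truncated virial quantity $I_R[u]$ from Lemma~\ref{VirialIden}, with $R$ chosen large (comparable to $\rho_\epsilon$ as in Step~1 of Proposition~\ref{Bounded11}); the key point is that $I_R[u(t)]$ is bounded uniformly in $t$ by $R\,\|u\|_{L^\infty_tH^1_x}^2\lesssim_Q R$, while $\frac{d}{dt}I_R[u]=F_{R,V}[u(t)]$ and, via Lemma~\ref{Compa} together with the error terms controlled by the potential tail and the modulation estimates \eqref{Estimatemodu}, one has $F_{R,V}[u(t)]\gtrsim \delta(t)-(\text{small})$ on a large portion of $[0,T]$. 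This is exactly the estimate \eqref{NewVirial}, which holds regardless of whether $x(t)$ is bounded. Concretely, for $T>0$ and $\epsilon>0$,
\[
\int_0^T\delta(t)\,dt\lesssim \epsilon T+\big[\rho_\epsilon+\sup_{t\in[0,T]}|x(t)|\big]\|u\|_{L^\infty_tH^1_x}^2.
\]

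The decisive additional input in the unbounded case is that one may choose the time windows so that $\sup_{t\in[0,T]}|x(t)|$ does \emph{not} grow too fast relative to $T$: since $|x(t_n)|\to\infty$ along $t_n\to\infty$ and $\delta(t_n)\to 0$, and since by Lemma~\ref{DeltaZero} and Proposition~\ref{Modilation11} the modulation parameter satisfies $|y'(t)|\lesssim|\delta(t)|$ on the set where $|\delta(t)|$ is small, the center $x(t)$ moves slowly precisely where it is large; more carefully, one controls the displacement of $x$ over an interval by $\int|y'|\lesssim\int\delta$, which feeds back into \eqref{NewVirial}. Choosing $\epsilon=\epsilon_n\to 0$ and then $T=T_n\to\infty$ suitably, one finds $\frac1{T_n}\int_0^{T_n}\delta(t)\,dt\to 0$, hence by the mean value theorem for integrals a sequence $s_n\to\infty$ with $\delta(s_n)\to 0$; but along such a sequence Lemma~\ref{DeltaZero} forces $|x(s_n)|\to\infty$ and Lemma~\ref{SequenceInf} gives vanishing potential energy, and then the same virial/modulation bootstrap as in Lemma~\ref{Compa} (using $F_{\infty,0}[u(s_n)]\gtrsim\delta(s_n)$ against the boundedness of $I_R[u]$) yields a contradiction with $\delta(t)>0$ for all $t\ge 0$.

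The main obstacle I expect is the interplay between the growth of $\sup_{t\in[0,T]}|x(t)|$ and the choice of $T$: one must verify that the displacement of the spatial center really is slaved to $\int\delta(t)\,dt$ on the relevant windows (this uses $|y'(t)|\lesssim|\delta(t)|$ from \eqref{Estimatemodu}, but only on $I_0$, so one has to handle the complementary set $[0,\infty)\setminus I_0$, where $\delta(t)\ge\delta_0$ is bounded below and the trajectory is compact by Proposition~\ref{Criticalsolution}, by a compactness/center-of-mass argument as in \cite{MiaMurphyZheng2021}). Once the a priori control $\sup_{[0,T]}|x(t)|=o(T)$ is in hand along a well-chosen sequence of $T$'s, \eqref{NewVirial} closes the argument exactly as in Proposition~\ref{Bounded11}. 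I would therefore structure the proof as: (1) reduce to a sequence $t_n\to\infty$ with $|x(t_n)|\to\infty$, $\delta(t_n)\to 0$; (2) establish the virial estimate \eqref{NewVirial}; (3) control the growth of $|x|$ by $\int\delta$; (4) optimize $\epsilon,T$ to get $\delta(s_n)\to 0$ along $s_n\to\infty$ and conclude via Lemma~\ref{DeltaZero} and Lemma~\ref{Compa}.
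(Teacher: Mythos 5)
Your proposal transplants the skeleton of Proposition~\ref{Bounded11} to the unbounded case, but the punchline of that argument does not survive the transplant. In Proposition~\ref{Bounded11} the contradiction is \emph{not} with $\delta(t)>0$: one produces $s_n\to\infty$ with $\delta(s_n)\to 0$, and Lemma~\ref{DeltaZero} then forces $|x(s_n)|\to\infty$, which contradicts the assumed \emph{boundedness} of $x(t)$. In your setting $x(t)$ is assumed unbounded, so arriving at a sequence $s_n\to\infty$ with $\delta(s_n)\to 0$ and $|x(s_n)|\to\infty$ is perfectly consistent with every hypothesis in force; $\delta(s_n)\to 0^{+}$ does not contradict $\delta(t)>0$, and the phrase ``the same virial/modulation bootstrap as in Lemma~\ref{Compa} \dots yields a contradiction'' does not describe an actual argument --- Lemma~\ref{Compa} is a lower bound used \emph{inside} the virial estimate \eqref{NewVirial}, not an independent source of contradiction. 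So even if steps (1)--(3) of your outline were carried out, step (4) would not close.

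There is also a genuine problem in step (3). You propose to control $\sup_{t\in[0,T]}|x(t)|$ by $\int_0^T|y'(t)|\,dt\lesssim\int_0^T\delta(t)\,dt$ and then feed this back into \eqref{NewVirial}, which bounds $\int_0^T\delta(t)\,dt$ by $\epsilon T+C\bigl[\rho_\epsilon+\sup_{t\in[0,T]}|x(t)|\bigr]$. This is circular: it yields $\int_0^T\delta\lesssim\epsilon T+\rho_\epsilon+C'\int_0^T\delta$ with constants over which you have no control, and unless $C'<1$ nothing follows. Moreover $|y'(t)|\lesssim|\delta(t)|$ is only available on $I_0$; on $[0,\infty)\setminus I_0$ the center is the function $x_0(t)$ coming from the compactness statement, whose displacement is not slaved to $\delta$, and you explicitly defer exactly this point. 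The paper gives no argument of its own here --- it cites \cite[Proposition 4.8]{MiaMurphyZheng2021} --- but whatever that argument is, its contradiction must come from the unboundedness itself (showing the center cannot in fact travel arbitrarily far once the potential is negligible along the escaping trajectory), not from producing times at which $\delta$ is small. As written, your proof has a gap at its decisive step.
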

\begin{proof}
The proof is essentially identical to that of \cite[Proposition 4.8]{MiaMurphyZheng2021} and we omit the details.
\end{proof}

Now we are ready to give the proof of scattering result of Theorem~\ref{Th2}.

\begin{proof}[{Proof of Theorem~\ref{Th2}}~(i)]
If Theorem \ref{Th2}~(i)  is not true, then there exists a critical element 
$u_{0}\in H^{1}(\R^{3})$ and a spatial center $x(t)$ such that the corresponding solution to \eqref{NLS} satisfies
$\left\{u(t, \cdot+ x(t)): t\geq 0\right\}$ is precompact in $H^{1}(\R^{3})$ (cf.  {{Proposition \ref{Criticalsolution}}}). However,
we have that this is impossible by Propositions \ref{Bounded11} and \ref{NaoBounded}.
\end{proof}

\section{Criteria for Blow-up}\label{Sec22}
In this section we give the proof of the blow-up result of Theorem~\ref{Th2}. Before the proof of Theorem~\ref{Th2}~(ii), we need the following result.
\begin{proposition}\label{Pro11}
If the initial data $u_{0}\in H^{1}(\R^{3})$ satisfies $|x|u_{0}\in L^{2}(\R^{3})$,
\begin{equation}\label{NewH}
M(u_{0})=M(Q), \quad E_{V}(u_{0})=E_{0}(Q),
\quad \text{and}\quad 
P_{V}(u_{0})<0,
\end{equation}
then the solution $u$ to \eqref{NLS} with data $u_{0}$ blows up in both directions.
\end{proposition}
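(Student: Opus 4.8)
The plan is to run a concavity argument on the variance $z(t):=\int_{\R^{3}}|x|^{2}|u(t,x)|^{2}\,dx$, which is finite and $C^{2}$ on the maximal existence interval $(-T_{\min},T_{\max})$ because $xu_{0}\in L^{2}(\R^{3})$. First I would record, from Lemma~\ref{VirialIden} with $R=\infty$, that $z'(t)=I_{\infty}[u(t)]$ and $z''(t)=F_{\infty,V}[u(t)]$, and then, using $E_{V}(u(t))=E_{0}(Q)$ together with the Pohozaev identities \eqref{PoQ} (as in the computation leading to \eqref{Aux11}) and $\nabla w_{\infty}\cdot\nabla V=-2\mu V$, rewrite this as
\begin{equation*}
z''(t)=F_{\infty,0}[u(t)]+4\mu\int_{\R^{3}}V(x)|u(t,x)|^{2}\,dx=4\delta(t)-4(2-\mu)\int_{\R^{3}}V(x)|u(t,x)|^{2}\,dx .
\end{equation*}
Since $1<\mu<2$ and, by Lemma~\ref{CondiBlo}, $\delta(t)=\|Q\|_{\dot{H}^{1}}^{2}-\|u(t)\|_{\dot{H}^{1}_{V}}^{2}<0$ throughout the existence interval, this gives $z''(t)\le4\delta(t)<0$, so $z$ is strictly concave. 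I would then suppose $T_{\max}=\infty$ and derive a contradiction (the case $T_{\min}=\infty$ is identical after applying everything to $\overline{u(-t,x)}$, which solves \eqref{NLS} with data $\overline{u_{0}}$ still obeying \eqref{NewH} and with $|x|\overline{u_{0}}\in L^{2}$).

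If $z'(t_{*})<0$ for some $t_{*}\ge0$, concavity forces $z(t)\to-\infty$, contradicting $z\ge0$; so I may assume $z'(t)\ge0$ on $[0,\infty)$. Then $z$ is nondecreasing with $z(t)\ge z(0)>0$ (note $u_{0}\not\equiv0$ since $M(u_{0})=M(Q)>0$), $z'$ is nonincreasing and nonnegative, hence $\int_{0}^{\infty}(-z''(t))\,dt\le z'(0)<\infty$; writing $-z''(t)=4|\delta(t)|+4(2-\mu)\int_{\R^{3}}V|u(t)|^{2}\,dx$ as a sum of nonnegative terms yields $\int_{0}^{\infty}|\delta(t)|\,dt<\infty$ and $\int_{0}^{\infty}\int_{\R^{3}}V|u(t)|^{2}\,dx\,dt<\infty$. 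Next I would invoke Lemma~\ref{OtherGN} (applicable since $M(u(t))=M(Q)$, $E_{V}(u(t))=E_{0}(Q)$, $|x|u(t)\in L^{2}$) to get $|z'(t)|\lesssim|\delta(t)|\sqrt{z(t)}$; combined with $|\delta(t)|\le-\tfrac14 z''(t)$ and $z'(t)\ge0$ this gives $\frac{d}{dt}\sqrt{z(t)}=\frac{z'(t)}{2\sqrt{z(t)}}\lesssim-z''(t)$, and integrating shows $\sqrt{z(T)}\lesssim\sqrt{z(0)}+z'(0)$ uniformly in $T$, i.e.\ $z$ is bounded on $[0,\infty)$.

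To close the argument I would pick $t_{n}\to\infty$ with $|\delta(t_{n})|+\int_{\R^{3}}V|u(t_{n})|^{2}\,dx\to0$ and apply Lemma~\ref{LemmaMod} to obtain $\theta_{n}\in\R$, $y_{n}\in\R^{3}$ with $\|u(t_{n})-e^{i\theta_{n}}Q(\cdot-y_{n})\|_{H^{1}}\to0$. Boundedness of $z$ forces $\{y_{n}\}$ to be bounded: if $|y_{n}|\to\infty$ along a subsequence, then $\int_{|y|\le1}|u(t_{n},y+y_{n})|^{2}\,dy\to\int_{|y|\le1}Q^{2}\,dy>0$, so $z(t_{n})\ge(|y_{n}|-1)^{2}\int_{|y|\le1}|u(t_{n},y+y_{n})|^{2}\,dy\to\infty$, a contradiction. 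Passing to a subsequence, $y_{n}\to y_{\infty}$ and $e^{i\theta_{n}}\to e^{i\theta_{\infty}}$, hence $u(t_{n})\to e^{i\theta_{\infty}}Q(\cdot-y_{\infty})$ in $H^{1}$; continuity of $f\mapsto\int V|f|^{2}$ on $H^{1}$ (Lemma~\ref{GHI}) then gives $\int V|u(t_{n})|^{2}\,dx\to\int V|Q(\cdot-y_{\infty})|^{2}\,dx>0$, contradicting $\int V|u(t_{n})|^{2}\,dx\to0$. The hard part is precisely this last scenario — the case $z'\ge0$ on $[0,\infty)$, which in the potential-free setting corresponds to the special global solution $Q^{+}$ of Duyckaerts--Roudenko; here it is excluded because the repulsivity of $V$ is used twice: to produce the extra favourable term $-4(2-\mu)\int V|u|^{2}$ in $z''$ (needed, via Lemma~\ref{OtherGN}, to keep $z$ bounded) and to guarantee $\int V|Q(\cdot-y_{\infty})|^{2}>0$.
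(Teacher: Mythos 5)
Your proof is correct. The first half — concavity of the variance $z$ via $z''=4P_V(u)=4\delta(t)-4(2-\mu)\int V|u|^2<0$, the sign argument forcing $z'\ge 0$, and the use of Lemma~\ref{OtherGN} in the form $|z'|\lesssim|\delta|\sqrt{z}\lesssim(-z'')\sqrt{z}$ to bound $\sqrt{z}$ on $[0,\infty)$ — is exactly the paper's Lemma~\ref{LP11}. Where you diverge is the endgame. The paper pushes the same differential inequality further to get exponential decay $f'(t)\le Ce^{-ct}$, hence $\int_t^\infty|\delta|\le Ce^{-ct}$ (estimate \eqref{Decai}), and then closes Step~1 of Proposition~\ref{Pro11} with the full dynamical modulation theory of Proposition~\ref{Modilation11}: it first upgrades $\delta(t_n)\to0$ to $\lim_{t\to\infty}\delta(t)=0$ via the auxiliary parameter $\alpha$ (using $|\alpha'|\lesssim|\delta|$ and $|\alpha|\sim|\delta|$), then contradicts $e^{-2|y(t)|}/|y(t)|^2\lesssim|\delta(t)|\to0$ (which forces $|y(t)|\to\infty$) against $|y'|\lesssim|\delta|\in L^1$ (which forces $y(t)$ to converge). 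You instead stop at the weaker conclusions $\int_0^\infty|\delta|\,dt<\infty$ and $\int_0^\infty\!\int V|u|^2\,dx\,dt<\infty$ (both immediate from $\int_0^\infty(-z'')\le z'(0)$ and the sign decomposition of $-z''$), extract a single sequence $t_n\to\infty$ along which both integrands vanish, and then need only the \emph{static} modulation Lemma~\ref{LemmaMod}: boundedness of $z$ pins $y_n$ in a compact set, and the resulting limit $u(t_n)\to e^{i\theta_\infty}Q(\cdot-y_\infty)$ contradicts $\int V|u(t_n)|^2\to0$ since $\int V|Q(\cdot-y_\infty)|^2>0$. This is a genuine simplification — it bypasses Proposition~\ref{Modilation11} entirely (no implicit-function-theorem parametrization of $\theta(t),y(t)$, no $\alpha'$ estimate, no trajectory analysis) — at the cost of not producing the quantitative decay \eqref{Decai}, which the paper isolates as a lemma but which is not needed for the blow-up conclusion itself. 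All the individual steps you use (the identity $P_V(u)=\delta-(2-\mu)\int V|u|^2$ under \eqref{NewH}, the applicability of Lemmas~\ref{OtherGN} and \ref{LemmaMod}, the continuity of $f\mapsto\int V|f|^2$ on $H^1$ from Lemma~\ref{GHI}, and the reduction of negative time to positive time by conjugation) check out.
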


To prove the proposition above we need some preparation. We begin with the following lemma.

\begin{lemma}\label{Comparative}
Fix $a>0$ and $0<\mu<2$.
Under assumption of Proposition~\ref{Pro11} we have 
\begin{align}\label{GNP}
\(\IM \int_{\R^{3}} x\cdot \nabla u(t) \overline{u(t)} dx\)^{2}
\lesssim
|P_{V}(u(t))|^{2}
\int_{\R^{3}}|x|^{2}|u(t)|^{2}dx,
\end{align}
where $u(t)$ is the corresponding solution to \eqref{NLS} with  data $u_{0}$.
\end{lemma}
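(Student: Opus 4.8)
The plan is to reduce Lemma~\ref{Comparative} to the already-established Cauchy-Schwarz-type bound of Lemma~\ref{OtherGN}, using that the conserved quantities $M(u(t))=M(Q)$ and $E_V(u(t))=E_0(Q)$ hold for all times along the flow, together with the comparison between the virial functional $P_V(u(t))$ and the deficit $\delta(u(t))$. Concretely, I would first apply Lemma~\ref{OtherGN} with $f=u(t)$: since $|x|u_0\in L^2$ propagates (the local virial identity in Lemma~\ref{VirialIden} with $R=\infty$ shows $\int |x|^2|u(t)|^2\,dx$ is finite for all $t$ in the existence interval), the hypotheses \eqref{OAss1} are satisfied at every $t$, yielding
\[
\(\IM \int_{\R^{3}} (x\cdot \nabla u(t)) \overline{u(t)} dx\)^{2}
\lesssim
|\delta(u(t))|^{2}
\int_{\R^{3}}|x|^{2}|u(t)|^{2}dx .
\]

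The remaining task is therefore to show $|\delta(u(t))|\lesssim |P_V(u(t))|$ under the blow-up assumptions \eqref{NewH}. The plan is to exploit that $\|u(t)\|_{L^2}=\|Q\|_{L^2}$ and the Pohozaev identities \eqref{PoQ}. From $E_V(u(t))=E_0(Q)$ one gets, exactly as in the proof of Lemma~\ref{OtherGN}, the identity $\|u(t)\|_{L^4}^4=\|Q\|_{L^4}^4-2\delta(u(t))$; note that by Lemma~\ref{CondiBlo} we have $\|u(t)\|_{\dot H^1_V}^2>\|Q\|_{\dot H^1}^2$, i.e.\ $\delta(u(t))<0$, so $|\delta(u(t))|=-\delta(u(t))$ and $\|u(t)\|_{L^4}^4 = \|Q\|_{L^4}^4 + 2|\delta(u(t))|$. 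Rewriting $P_V$ using the energy: from the definitions of $E_V$ and $P_V$,
\[
P_V(u(t)) = 4\|u(t)\|_{\dot H^1_V}^2 - 2\int_{\R^3}(x\cdot\nabla V)|u|^2\,dx - 6 E_V(u(t)) + \text{(lower-order rearrangement)},
\]
so more usefully I would combine $P_V$ and $E_V$ to cancel the $L^4$ term and express a clean relation; since $x\cdot\nabla V = -\mu V$, the potential contributes a sign-definite piece $\mu\int V|u|^2\,dx > 0$. Carrying this out, $P_V(u(t))$ is a linear combination of $\delta(u(t))$ and $\int V|u(t)|^2\,dx$ with coefficients whose signs force $|\delta(u(t))|\lesssim |P_V(u(t))|$, because $\delta(u(t))<0$ while $\int V|u(t)|^2\,dx\geq 0$ push $P_V$ in the same (negative) direction — this is the analogue of the computation in Lemma~\ref{CondiBlo} showing $\tfrac12 P_V(u(t_0)) \leq \|\nabla u(t_0)\|_{L^2}^2(1-\cdots) + \tfrac{\mu}{2}\int V|u|^2\,dx$.

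The main obstacle I anticipate is obtaining the \emph{lower} bound $|P_V(u(t))| \gtrsim |\delta(u(t))|$ with a uniform constant rather than merely $|P_V| \lesssim |\delta| + |\delta|^{1/2}$ or similar; one must be careful that the Gagliardo-Nirenberg deficit is controlled \emph{linearly} by $\delta$ near the ground state, which requires the non-degeneracy/Taylor-expansion argument already used in the proof of Lemma~\ref{OtherGN} (the expansion of $\big(\tfrac{\|Q\|_{L^4}^4-2\delta}{C_{GN}\|Q\|_{L^2}}\big)^{2/3}$ to first order in $\delta$). Combining that linear control with the sign information from $\delta(u(t))<0$ and $\int V|u(t)|^2\,dx\geq 0$ closes the estimate. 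Substituting $|\delta(u(t))|\lesssim|P_V(u(t))|$ into the displayed consequence of Lemma~\ref{OtherGN} gives \eqref{GNP}, completing the proof.
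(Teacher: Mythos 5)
Your proposal follows essentially the same route as the paper: apply Lemma~\ref{OtherGN} to $f=u(t)$ and reduce the claim to $|\delta(u(t))|\le |P_V(u(t))|$, which the paper obtains from the exact identity $P_V(u(t))=\delta(t)-(2-\mu)\int_{\R^3}V(x)|u(t,x)|^2\,dx\le\delta(t)<0$ (a consequence of energy conservation and the Pohozaev identities, with the signs $\delta(t)<0$ and $2-\mu>0$ doing the work, exactly as you describe). The only remark is that your anticipated obstacle is not one: no Taylor expansion or non-degeneracy is needed at this stage, since the relation between $P_V$ and $\delta$ is an exact algebraic identity; the expansion argument lives entirely inside the already-proved Lemma~\ref{OtherGN}.
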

\begin{proof}
As $P_{V}(u_{0})<0$,  from Lemma~\ref{CondiBlo} we see that $\delta(t)<0$ and $P_{V}(u(t))<0$ for all $t$ in the existence time.
Now, since
\[
\int_{\R^{3}}|\nabla u|^{2}-\tfrac{3}{4}|u|^{4}dx=3E_{V}(u)-\int_{\R^{3}}\tfrac{1}{2}|\nabla u|^{2}+\tfrac{3}{2}V(x)|u|^{2}dx
\]
and $E_{V}(u)=E_{0}(Q)$, by \eqref{PoQ} we  obtain
\begin{align*}
	P_{V}(u(t))&=2\(\tfrac{1}{2}\delta(t)-\int_{\R^{3}}V(x)| u(t,x)|^{2}dx\) -\int_{\R^{3}} x\cdot\nabla V(x)|u(t,x)|^{2}dx\\
	&=\delta(t)-(2-\mu)\int_{\R^{3}}V(x)| u(t,x)|^{2}dx\leq \delta(t)<0
\end{align*}
for all $t$ in the existence time. Thus, $|\delta(t)|^{2}\leq |P_{V}(u(t))|^{2}$ and  Lemma~\ref{OtherGN} implies \eqref{GNP}.
\end{proof}

\begin{lemma}\label{LP11}
Under assumption of Proposition~\ref{Pro11}, if $u(t)$  is global in positive time, then 
there exist postive constant $C>0$ and $c>0$ such  that
\begin{equation}\label{Decai}
\int^{\infty}_{t}|\delta(\tau)| d\tau \leq Ce^{-ct}\quad \text{for all $t\in (0, \infty)$}.
\end{equation}
\end{lemma}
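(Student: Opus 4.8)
The plan is to run a convexity argument on the variance
\[
y(t):=\int_{\R^{3}}|x|^{2}|u(t,x)|^{2}\,dx ,
\]
in the spirit of \cite[Section~6]{DuyckaertsRou2010}. Since $|x|u_{0}\in L^{2}(\R^{3})$ and $u$ is global in positive time, by the standard truncated-virial approximation (differentiating $\int w_{R}|u|^{2}$, using the finite-time boundedness of $\|\nabla u(t)\|_{L^{2}}$, and letting $R\to\infty$; cf.\ Lemma~\ref{VirialIden}) the weight is propagated, so $|x|u(t)\in L^{2}$ for every $t\ge0$, $y\in C^{2}([0,\infty))$, and the virial identities give
\[
y'(t)=I_{\infty}[u(t)]=4\IM\int_{\R^{3}}x\cdot\nabla u(t)\,\overline{u(t)}\,dx ,\qquad y''(t)=F_{\infty,V}[u(t)]=4P_{V}(u(t)) .
\]
As in the proof of Lemma~\ref{Comparative}, conservation of mass and energy together with the Pohozaev identities \eqref{PoQ} give $P_{V}(u(t))=\delta(t)-(2-\mu)\int_{\R^{3}}V|u(t)|^{2}\,dx\le\delta(t)$, while $\delta(t)<0$ for all $t$ by Lemma~\ref{CondiBlo}; hence
\[
y''(t)\le4\delta(t)=-4|\delta(t)|<0\qquad\text{for all }t\ge0 .
\]
Moreover, Lemma~\ref{OtherGN} applied to $f=u(t)$ (legitimate since $M(u(t))=M(Q)$ and $E_{V}(u(t))=E_{0}(Q)$) provides a constant $C_{1}>0$ with
\[
\big(y'(t)\big)^{2}\le C_{1}\,\delta(t)^{2}\,y(t)\qquad\text{for all }t\ge0 .
\]

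Next I would exploit the concavity of $y$. Since $y$ is strictly concave on all of $[0,\infty)$ and nonnegative, one necessarily has $y'(t)>0$ for every $t$: if $y'(t_{0})\le0$ for some $t_{0}$, choose $t_{1}>t_{0}$; then $y'(t_{1})<y'(t_{0})\le0$ by strict concavity, so $y'(t)\le y'(t_{1})<0$ for all $t\ge t_{1}$, whence $y(t)\le y(t_{1})+y'(t_{1})(t-t_{1})\to-\infty$, contradicting $y\ge0$. Thus $y'$ decreases to a limit $L\ge0$, and integrating $y''$ yields
\[
4\int_{t}^{\infty}|\delta(\tau)|\,d\tau\le\int_{t}^{\infty}\big(-y''(\tau)\big)\,d\tau=y'(t)-L\le y'(t)<\infty ,
\]
so in particular $\int_{0}^{\infty}|\delta|<\infty$. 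Using $y'>0$, the inequality $\big(y'\big)^{2}\le C_{1}\delta^{2}y$ rewrites as $2\big(\sqrt{y(t)}\big)'=y'(t)/\sqrt{y(t)}\le\sqrt{C_{1}}\,|\delta(t)|$; integrating from $0$ to $T$ and using $\int_{0}^{\infty}|\delta|<\infty$ shows that $\sqrt{y}$ is bounded, hence $K:=\sup_{t\ge0}y(t)<\infty$.

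Finally I would close the loop. Set $G(t):=\int_{t}^{\infty}|\delta(\tau)|\,d\tau$, which is finite, strictly positive (since $\delta(t)<0$ for all $t$), and satisfies $G'(t)=-|\delta(t)|$. The bound $4\int_{t}^{\infty}|\delta|\le y'(t)$ obtained above reads $4G(t)\le y'(t)$, while $\big(y'(t)\big)^{2}\le C_{1}\delta(t)^{2}y(t)\le C_{1}K\,\delta(t)^{2}$ together with $y'>0$ gives $y'(t)\le\sqrt{C_{1}K}\,|\delta(t)|=-\sqrt{C_{1}K}\,G'(t)$. Combining the two,
\[
G'(t)\le-\frac{4}{\sqrt{C_{1}K}}\,G(t)\qquad\text{for all }t\ge0 ,
\]
and integrating this differential inequality gives $G(t)\le G(0)\,e^{-ct}$ with $c=4/\sqrt{C_{1}K}$, which is precisely \eqref{Decai} with $C=\int_{0}^{\infty}|\delta(\tau)|\,d\tau$.

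The main obstacle is not the ODE bookkeeping above but the two structural inputs underpinning it, both genuinely tied to the mass-energy threshold: the refined Cauchy--Schwarz/Gagliardo--Nirenberg estimate $(y')^{2}\lesssim\delta^{2}y$ of Lemma~\ref{OtherGN}, and the sign identity $P_{V}(u(t))\le\delta(t)<0$ coming from the Pohozaev relations and Lemma~\ref{CondiBlo}. A secondary, purely technical point is justifying $y\in C^{2}$ with $y''=4P_{V}$ under only $|x|u_{0}\in L^{2}$ (without assuming $\nabla u$ bounded uniformly in time), which is handled by the truncated-virial approximation mentioned above.
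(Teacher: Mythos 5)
Your proof is correct and takes essentially the same route as the paper's: the variance $f(t)=\|xu(t)\|_{L^2}^2$ is strictly concave by $f''=4P_V(u)\le 4\delta<0$, hence $f'>0$; Lemma~\ref{OtherGN} (via Lemma~\ref{Comparative}) gives $(f')^2\lesssim\delta^2 f$, whence $\sqrt{f}$ is bounded and an exponential (Gronwall-type) decay follows. The only cosmetic difference is that the paper closes with the differential inequality $f'\lesssim -f''$ to get $f'(t)\le Ce^{-ct}$ and then bounds $\int_t^\infty|\delta(\tau)|\,d\tau\le\tfrac14 f'(t)$, whereas you run the same Gronwall step directly on $G(t)=\int_t^\infty|\delta(\tau)|\,d\tau$; both are valid.
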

\begin{proof}
Writing $f(t)=\|xu(t)\|^{2}_{L^{2}}$, we see that (cf. Lemma~\ref{VirialIden})
\[
f^{\prime}(t)=4\IM \int_{\R^{3}} \overline{u}(t)\nabla u(t)\cdot xdx, \quad
f^{\prime\prime}(t)=4P_{V}(u(t)).
\]
Lemma~\ref{CondiBlo} implies that
\[
f^{\prime}(t_{2})-f^{\prime}(t_{1})=\int^{t_{2}}_{t_{1}}f^{\prime\prime}(s)ds=
4\int^{t_{2}}_{t_{1}}P_{V}(u(s))ds<0 \qtq{for $t_{1}<t_{2}$.}
\]
We claim that  $f^{\prime}(t)>0$ for all $t$ in the existence time. Indeed, assume by contradiction that there exists $t^{\ast}\in \R$
so that $f^{\prime}(t^{\ast})\leq 0$. Then inequality above shows that $f^{\prime}(t)<0$ for any $t>t^{\ast}$, which is
a contradiction because $f(t)> 0$ for all $t\in [0, +\infty)$. Therefore,
\begin{align}\label{Nece}
\tfrac{1}{4}f^{\prime}(t)	= \IM \int_{\R^{3}} \overline{u}(t, x)\nabla u(t, x)\cdot xdx>0, \quad \text{for all $t\in (0, \infty)$.}
\end{align}

Now we will show that $f^{\prime}(t)\leq Ce^{-ct}$ for all $t\geq 0$. Indeed, note that $f>0$, $f^{\prime}>0$, and $f^{\prime\prime}<0$.
Thus,  thanks to Lemma~\ref{Comparative} we see that
\[
|f^{\prime}(t)|^{2} \lesssim (f^{\prime\prime}(t))^{2}f(t) \quad \text{for all $t$ in the existence time,}
\]
which implies 
\begin{equation}\label{Axi11}
\frac{f^{\prime}(t)}{\sqrt{f(t)}}\lesssim - f^{\prime\prime}(t)
 \quad \text{for all $t$ in the existence time}.
\end{equation}
Integrating inequality above on $(0, t)$ we get
\[
\sqrt{f(t)}-\sqrt{f(0)}\lesssim -f^{\prime}(t)+f^{\prime}(0)\lesssim f^{\prime}(0),
\]
i.e,  $\sqrt{f(t)}$ is bounded. From \eqref{Axi11}, it follows that  $f^{\prime}(t)\lesssim -f^{\prime\prime}(t)$
for all $t$ in the existence time, which shows $f^{\prime}(t)\leq Ce^{-ct}$ for some constants $C>0$, $c>0$.
 In particular, $\lim_{t\to \infty}f^{\prime}(t)=0$. 

Finally, since $0<-\delta(t)\leq -P_{V}(u(t))$, we get
\begin{align*}
\int^{\infty}_{t}|\delta(s)|ds
& = \int^{\infty}_{t}[-\delta(s)]ds\leq 
\int^{\infty}_{t}[-P_{V}(u(s))]ds \\
&=\tfrac{1}{4}\int^{\infty}_{t}[-f^{\prime\prime}(s)]ds
=\tfrac{1}{4}f^{\prime}(t)\leq Ce^{-ct},
\end{align*}
for $t\in(0, +\infty)$. this completes the proof.
\end{proof}

\begin{proof}[{Proof of Proposition \ref{Pro11}}]
Assume that the initial data $u_{0}\in H^{1}(\R^{3})$ satisfies $|x|u_{0}\in L^{2}(\R^{3})$,
\begin{equation}\label{FinaC}
M(u_{0})=M(Q), \quad E_{V}(u_{0})=E_{0}(Q),
\quad \text{and}\quad 
P_{V}(u_{0})<0.
\end{equation}
From Lemma~\ref{CondiBlo} we see that $\delta(t)<0$ for all $t$ in the existence time.

\textit{Step 1.} \textit{The corresponding solution $u(t)$ to \eqref{NLS} with initial data $u_{0}$ is not global in positive time.}
Indeed, by contradiction, assume that  $u$ global in positive time. 
By \eqref{Decai} we deduce that there exists $\left\{t_{n}\right\}_{n\in \N}$ with $t_{n}\to +\infty$ such that
$\lim_{n \to \infty}\delta(t_{n})=0$. Fix such $\left\{t_{n}\right\}_{n\in \N}$.

Notice that  $\lim_{t \to \infty}\delta(t)=0$. If not, there exists a sequence $\left\{t^{\prime}_{n}\right\}_{n\in \N}$ such that
$-\delta(t^{\prime}_{n})\geq \epsilon$ for some $\epsilon\in(0, \delta_{0})$.
Extracting subsequences of $\left\{t_{n}\right\}_{n\in \N}$ and $\left\{t^{\prime}_{n}\right\}_{n\in \N}$ if necessary, 
we can assume the following properties:
\[
t_{n}<t^{\prime}_{n}, \quad -\delta(t^{\prime}_{n})=\epsilon, \quad -\delta(t)<\epsilon \quad \text{for all $t\in [t_{n}, t^{\prime}_{n})$}.
\]
On $[t_{n}, t^{\prime}_{n})$ the parameter $\alpha(t)$ is well defined. As $|\alpha^{\prime}(t)|\leq C|\delta(t)|$
(cf. \eqref{ultima}), estimate \eqref{Decai} implies 
\begin{equation}\label{limnn}
\lim_{n\to \infty}|\alpha(t_{n})-\alpha(t^{\prime}_{n})|=0.
\end{equation}
Thus, as $|\alpha| \sim |\delta|$ (cf. \eqref{alfaE}), we deduce
\[
|\alpha(t_{n})| \sim |\delta(t_{n})|\to 0 \quad \text{and} \quad 
|\alpha(t^{\prime}_{n})| \sim |\delta(t^{\prime}_{n})|=\epsilon>0,
\]
which is a contradiction to \eqref{limnn}. Therefore, $\lim_{t \to \infty}\delta(t)=0$.
Note that by estimate \eqref{Estimatemodu} we also have
\begin{equation}\label{ContaF}
\frac{e^{-2|y(t)|}}{|y(t)|^{2}}\lesssim |\delta(t)|\to 0 \qtq{as $t\to \infty$.}
\end{equation}

Now,   by using  \eqref{Estimatemodu} and \eqref{Decai} we get  ,
\[
|y(t)-y(t_{1})|\leq \int^{t}_{t_{1}}|y^{\prime}(s)|ds \lesssim
\int^{t}_{t_{1}}|\delta(s)|ds\lesssim e^{-ct_{1}} \qtq{for all $t>t_{1}$}.
\]
This implies that there exists a sequence $\left\{\tau_{n}\right\}_{n\in \N}$ such that $\tau_{n} \to \infty$ with
$|y(\tau_{n})| \to a\in \R$. In particular,
\[
\lim_{n\to \infty}\frac{e^{-2|y(\tau_{n})|}}{|y(\tau_{n})|^{2}} >0,
\]
which is a contradiction to \eqref{ContaF}. Thus, $u(t)$ is not global in positive time.

\textit{Step 2.} \textit{The solution $u$ is not global in negative time.}  Suppose by contradiction that
 $u$ global in negative time. Writing $v(t,x):=\overline{u(-t,x)}$,  we have  that  $v$ is a  global in positive time solution to \eqref{NLS}.

But then, since $|x|v(0)\in L^{2}(\R^{3})$, $E_{V}(v(0))=E_{0}(Q)$, $M(v(0))=M(Q)$ and $P_{V}(v(0))<0$, it follows 
by Step 1 above that $v$ blows-up in positive time, which is a contradiction.

This completes the proof of proposition.
\end{proof}

\begin{proof}[{Proof of Theorem~\ref{Th2}~(ii)}]
First, assume $|x|u_{0}\in L^{2}(\R^{3})$. Then the proof is a direct consequence of Proposition~\ref{Pro11} and the following claim:
\begin{claim}
Assume that Theorem~\ref{Th2}~(ii) holds for $a>0$ with the condition
\eqref{BlowC} replaced by \eqref{NewH}. Then we have the same conclusion in Theorem~\ref{Th2}~(ii) with the 
original hypothesis \eqref{BlowC}. 
\end{claim}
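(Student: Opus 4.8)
The plan is to reduce the general mass-energy-threshold blow-up statement \eqref{BlowC} to the normalized statement \eqref{NewH} via the same scaling argument used in Lemma~\ref{Newconditions} for the scattering part. Let $u_{0}\in H^{1}(\R^{3})$ satisfy \eqref{BlowC}, i.e. $E_{V}(u_{0})M(u_{0})=E_{0}(Q)M(Q)$ and $P_{V}(u_{0})<0$. Set $\lambda=\frac{M(u_{0})}{M(Q)}>0$ and define $v_{0}(x)=\lambda u_{0}(\lambda x)$, so that $M(v_{0})=M(Q)$. A direct computation with the scaled potential $V_{\lambda}(x)=\lambda^{2}V(\lambda x)=\lambda^{2-\mu}a|x|^{-\mu}$ gives $E_{V_{\lambda}}(v_{0})=\lambda^{2}E_{V}(u_{0})/\lambda^{?}$; more precisely one checks that under this scaling $E_{V_{\lambda}}(v_{0})M(v_{0})=E_{V}(u_{0})M(u_{0})=E_{0}(Q)M(Q)$ together with $M(v_{0})=M(Q)$ forces $E_{V_{\lambda}}(v_{0})=E_{0}(Q)$, and that $P_{V_{\lambda}}(v_{0})=\lambda\,P_{V}(u_{0})<0$. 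Thus $v_{0}$ satisfies the hypotheses \eqref{NewH} relative to the equation with potential $V_{\lambda}$, whose coefficient $\lambda^{2-\mu}a$ is still a positive number and whose exponent $\mu$ is unchanged, so it is again of the admissible form.

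Next I would observe that the scaled function $v(t,x)=\lambda u(\lambda^{2}t,\lambda x)$ solves
\[
i\partial_{t}v+\Delta v-\lambda^{2-\mu}a|x|^{-\mu}v+|v|^{2}v=0,
\]
and that $|x|v_{0}\in L^{2}(\R^{3})$ whenever $|x|u_{0}\in L^{2}(\R^{3})$ (the weight scales by a harmless power of $\lambda$), and likewise radial symmetry of $u_{0}$ is preserved by the dilation. Applying the assumed conclusion of Theorem~\ref{Th2}~(ii) under condition \eqref{NewH} to $v$, we get that $v$ blows up in both time directions. Since $v(t,x)=\lambda u(\lambda^{2}t,\lambda x)$ and $\lambda>0$ is fixed, blow-up of $v$ in finite positive (resp.\ negative) time is equivalent to blow-up of $u$ in finite positive (resp.\ negative) time; in particular $\|\nabla u(t)\|_{L^{2}}\to\infty$ as $t$ approaches the maximal (resp.\ minimal) existence time. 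This yields the conclusion of Theorem~\ref{Th2}~(ii) for the original data $u_{0}$ under \eqref{BlowC}, proving the claim.

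After the claim, the remaining case of Theorem~\ref{Th2}~(ii) is that of radially symmetric $u_{0}$ without the weighted assumption $|x|u_{0}\in L^{2}$. Here the plan is to replace the exact virial identity by the localized one from Lemma~\ref{VirialIden}, running the blow-up argument with $I_{R}[u]$ in place of $\|xu(t)\|_{L^{2}}^{2}$. Using radiality to control the error terms $F_{R,V}[u]-F_{\infty,V}[u]$ coming from the cutoff $w_{R}$ outside $|x|\le R$, together with the coercivity $F_{\infty,0}[u(t)]\gtrsim\delta(t)$ (note $\delta(t)<0$ now, so one uses the reversed sign, i.e.\ $F_{\infty,0}[u(t)]\le c\,\delta(t)<0$ via Lemma~\ref{CondiBlo}) and the decay estimate $\int_{t}^{\infty}|\delta(\tau)|\,d\tau\le Ce^{-ct}$ from Lemma~\ref{LP11}, one derives a contradiction with global existence exactly as in Step~1 and Step~2 of the proof of Proposition~\ref{Pro11}. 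The main obstacle I anticipate is the radial case: one must show that the modulation analysis of Section~\ref{S:Modulation} and the Cauchy--Schwarz-type estimate of Lemma~\ref{OtherGN}/Lemma~\ref{Comparative} survive when $\|xu(t)\|_{L^{2}}$ is replaced by the localized quantity, which requires the standard radial-Sobolev (Strauss) bound to absorb the tail contributions; the scaling reduction itself (the claim) is routine bookkeeping.
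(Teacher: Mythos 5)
Your proof of the claim is correct and is essentially identical to the paper's: the same rescaling $v_{0}(x)=\lambda u_{0}(\lambda x)$ with $\lambda=M(u_{0})/M(Q)$, using scale-invariance of the product $E\cdot M$ to get $E_{V_{\lambda}}(v_{0})=E_{0}(Q)$, $M(v_{0})=M(Q)$, $P_{V_{\lambda}}(v_{0})=\lambda P_{V}(u_{0})<0$, and observing that the rescaled potential $\lambda^{2-\mu}a|x|^{-\mu}$ is still of the admissible repulsive form. The additional sketch of the radial case goes beyond the stated claim and is not needed for it.
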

\begin{proof}[{Proof of Claim}]
The proof is very similar to that given in Lemma~\ref{Newconditions}.
Let $a>0$. Assume that Theorem~\ref{Th2}~(ii) is true with the condition \eqref{NewH}. Consider $u_{0}\in H^{1}(\R^{3})$ such that 
$|x|u_{0}\in L^{2}(\R^{3})$,
\[
E_{V}(u_{0})M(u_{0})=E_{0}(Q)M(Q)
\quad\text{and}\quad P_{V}(u_{0})<0.
\]
Writing $V_{\lambda}(x)=\lambda^{2}V(\lambda x)$, $v_{0}(x)=\lambda u_{0}(\lambda x)$ and 
$v(t,x)=\lambda u(\lambda^{2}t, \lambda x)$ with
$\lambda={{\frac{M(u_{0})}{M(Q)}}}$ we deduce
\[
E_{V_{\lambda}}(v_{0})=E_{0}(Q), \quad M(v_{0})=M(Q)
\quad \text{and}\quad  P_{V}(v_{0})=\lambda P_{V}(u_{0})<0.
\]
As $v$ satisfies the equation
\[
i\partial_{t}v+\Delta v -\lambda^{\mu-2}a|x|^{-\mu}v+|v|^{2}v=0
\] 
and $\lambda^{\mu-2}a>0$, Proposition~\ref{Pro11} implies that $v$ blow up in both directions. In particular, we see that
$u$ blow up in both directions. This completes the proof of claim.
\end{proof}

Next, assume that $u_{0}$  is radially symmetric. The proof is based on \cite[Subsection 5.2]{DuyckaertsRou2010}. We consider only positive time. We assume for contradiction that the solution $u$ exists on $[0,\infty)$ under the assumptions of Theorem~\ref{Th2}~(ii).
Then, we prove $|x|u_0 \in L^2(\mathbb{R}^3)$.

We define a functional 
\begin{align*}
	J_R[u(t)]
		:= \int_{\mathbb{R}^3}w_R(x)|u(t,x)|^2dx,
\end{align*}
where $w_R$ is defined by \eqref{WRR}. We also assume $\phi^{\prime\prime}(r) \leq 2$. Then, we have
\begin{align*}
	\frac{d^2}{dt^2}J_R[u(t)]
		& = F_{R,V}[u(t)] \\
		& = 4\delta(t) + \int_{\mathbb{R}^3}(-\Delta\Delta w_R)|u|^2dx - \int_{|x| \geq R}(\Delta w_R - 6)|u|^4dx \\
		& + 4\int_{|x| \geq R}(\phi^{\prime\prime}\(\tfrac{x}{R}\) - 2)|\nabla u|^2dx + 2\int_{\mathbb{R}^3}\left\{\tfrac{\mu R}{|x|}\phi^{\prime}\(\tfrac{x}{R}\) - 4\right\}\frac{a}{|x|^\mu}|u|^2dx \\
		& =: 4\delta(t) + A_R[u(t)] + 2\int_{\mathbb{R}^3}\left\{\tfrac{\mu R}{|x|}\phi^{\prime}\(\tfrac{x}{R}\)  - 4\right\}\frac{a}{|x|^\mu}|u|^2dx.
\end{align*}
We see 
\begin{align*}
\tfrac{\mu R}{|x|}\phi^{\prime}\(\tfrac{x}{R}\) - 4
		\leq 0
\end{align*}
from simple calculation (recall that $1<\mu<2$ and $\phi^{\prime}(r)\leq 2r$).
The argument in \cite[Subsection 5.2]{DuyckaertsRou2010} with Proposition~\ref{Modilation11} deduces that there exists $R_0 > 0$ such that
\begin{align}\label{Eq1}
	A_R[u(t)]
		\leq - 2\delta(t)
\end{align}
for any $R \geq R_0$ and any $t \in [0,\infty)$. In particular, we see that
$\frac{d^2}{dt^2}J_R[u(t)]\leq 2\delta(t)<0$ for any $R \geq R_0$ and any $t \in [0,\infty)$.

From now on, we prove that $|x|u_0 \in L^2(\mathbb{R}^3)$ holds. Fix $R \geq R_0$, where $R_0$ is taken above.

\textit{Step 1:} First, we prove $\frac{d}{dt}J_R > 0$ for any $t \in [0,\infty)$. If not, then there exists $\varepsilon > 0$ and $t_0 \in [0,\infty)$ such that $\frac{d}{dt}J_R[u(t)] < - \varepsilon$ for any $t \geq t_0$ from $\frac{d^2}{dt^2}J_R[u(t)] < 0$.
This contradicts the fact that $J_R[u(t)] \geq 0$ for any $t \in [0,\infty)$.

\textit{Step 2:} We show that $u$ has finite variance. Since $\frac{d}{dt}J_R[u(t)]$ is positive and decreasing, we have $\frac{d}{dt}J_R[u(t)] \rightarrow c$ as $t \rightarrow \infty$ for some $c \geq 0$ and hence,
\begin{align*}
	- \infty
		< c - \frac{d}{dt}J_R[u_0]
		= \int_0^\infty \frac{d^2}{dt^2}J_R[u(s)]ds
		\leq 2\int_0^\infty \delta(s)ds
		\leq 0.
\end{align*}
This inequality implies that there exists a sequence $\{t_n\} \subset [0,\infty)$ with $t_n \rightarrow \infty$ such that $\delta(t_n) \rightarrow 0$ as $n \rightarrow \infty$.
Now, since $u$ is radially symmetric, we can take a sequence $\{\theta_n\} \subset \mathbb{R}$ such that $e^{i\theta_n}u(t_n) \rightarrow Q$ in $H^1$ as $n \rightarrow \infty$ by Lemma~\ref{LemmaMod}.
Therefore, as $J_R[u(t)]$ is increasing, we obtain
\begin{align*}
	J_R[u_0]
		= \int_{\mathbb{R}^3}w_R(x)|u_0(x)|^2dx
		\leq \int_{\mathbb{R}^3}w_R(x)|Q(x)|^2dx
		\leq \int_{\mathbb{R}^3}|x|^2|Q(x)|^2dx
		< \infty.
\end{align*}
Letting $R \rightarrow \infty$, monotone convergence theorem deduces
\begin{align*}
	J_R[u_0]
		\rightarrow \int_{\mathbb{R}^3}|x|^2|u_0(x)|^2dx
		\leq \int_{\mathbb{R}^3}|x|^2|Q(x)|^2dx
		< \infty.
\end{align*}
Therefore, $|x|u_{0}\in L^{2}(\R^{3})$. It follows that $u$ blows up. However, this is contradiction.

This completes the proof of theorem.
\end{proof}

\section{Failure of uniform space-time bounds at threshold}\label{S:Fail}

In this section, we prove Theorem \ref{BoundTheorem}. We follow the proof of Theorem 1.5 in \cite{KillipMurphyVisanZheng}.
\begin{proof}[{Proof of Theorem \ref{BoundTheorem}}]
Consider $\varphi_{n}=(1-\epsilon_{n})Q(x-x_{n})$ with $\epsilon_{n}\to 0$ and $|x_{n}|\to \infty$. 
{Since $\mu\in (1,2)$}, by Lemma 2.7 in \cite{GuoWangYao2018}, we see that
\[E_{V}(\varphi_{n})M(\varphi_{n})\nearrow  E_{0}(Q)M(Q)
\quad \text{and}\quad 
P_{V}(\varphi_{n})\to 0
\]
as $n\to \infty$. Moreover, combining  \eqref{PoQ} and $P_{0}(Q)=0$, it is not hard to show that  $P_{V}(\varphi_{n})>0$ for all $n\in \N$.
Therefore, from Theorem \ref{Th1} we get that the corresponding solution $u_{n}$ to \eqref{NLS} with initial data $\varphi_{n}$
exists globally and scatters.

We want to apply Lemma~\ref{StabilityNLS} over $[-T,T]\times\R^{3}$.
With this in mind, for each $n$, let $\chi_n$ be a smooth function obeying
\[
\chi_n(x)=\begin{cases} 0 & |x_n+ x|<\tfrac14|x_n| \\ 1 & |x_n + x|>\tfrac12 |x_n|,\end{cases}\qtq{with} 
\sup_{x}|\partial^k \chi_n(x)|\lesssim \bigl(\tfrac{\lambda_n}{|x_n|}\bigr)^{|k|}
\]
uniformly in $x$.  Notice that $\chi_n(x)\to 1$ as $n\to\infty$ for each $x\in\R^3$.

Now, fix $T>0$. We put
\[
\tilde{v}_{n}(t,x)=(1-\epsilon_{n})e^{it}[\chi_{n}Q](x-x_{n}).
\]
We need to estimate 
$\||\nabla|^{\frac{1}{2}}e_{n}\|_{N([-T,T])}$,
where
\begin{align}\nonumber
	e_{n}&=(i\partial_{t}-H)\tilde{v}_{n}+|\tilde{v}_{n}|^{2}\tilde{v}_{n}\\ \label{e11}
	&=e^{i t}[(1-\epsilon_{n})^{3}\chi^{3}_{n}(x-x_{n})-(1-\epsilon_{n})\chi_{n}(x-x_{n})]Q^{3}(x-x_{n})\\ \label{e22}
	&+(1-\epsilon_{n})e^{i t}[Q \Delta \chi_{n}+2\nabla \chi_{n}\cdot \nabla Q](x-x_{n})\\  \label{e33}
	&-\tfrac{a}{|x|^{\mu}}(1-\epsilon_{n})e^{it}[\chi_{n}Q](x-x_{n}).
\end{align}

For \eqref{e11}, we apply H\"older's inequality, Sobolev embedding and dominated convergence theorem to estimate 
(recall that $Q\in \Sch(\R^{3})$)
\begin{align*}
&\|\nabla \eqref{e11}\|_{L_{t}^{1}L^{2}_{x}}
\lesssim T[\|\nabla \chi_{n}\|_{L_{x}^{3}} \|Q\|^{3}_{L_{x}^{18}} +\|\nabla Q\|_{L_{x}^{2}} \|Q\|^{2}_{L_{x}^{\infty}}]\lesssim T\\
&\| \eqref{e11}\|_{L_{t}^{1}L^{2}_{x}}\lesssim {{T
 \|Q\|^{2}_{L_{x}^{\infty}}\|[(1-\epsilon_{n})^{3}\chi^{3}_{n}(x-x_{n})-(1-\epsilon_{n})\chi_{n}(x-x_{n})]Q\|_{L^{2}_{x}}}}\to 0.
\end{align*}
Similarly,
\begin{align*}
&\|\nabla \eqref{e22}\|_{L_{t}^{1}L^{2}_{x}}+\|\eqref{e22}\|_{L_{t}^{1}L^{2}_{x}}
\lesssim T[\|\nabla \Delta \chi_{n}\|_{L_{x}^{\infty}}+
\|\Delta \chi_{n}\|_{L_{x}^{\infty}}+\|\nabla  \chi_{n}\|_{L_{x}^{\infty}}
] \|Q\|_{H_{x}^{2}}\\
&\lesssim T[|x_{n}|^{-3}+|x_{n}|^{-2}+|x_{n}|^{-1}]\to 0,\\
	&\|\nabla \eqref{e33}\|_{L_{t}^{1}L^{2}_{x}}+\|\eqref{e33}\|_{L_{t}^{1}L^{2}_{x}}
	\lesssim {{T[\| \tfrac{\chi_{n}}{|\cdot+x_{n}|^{\mu}} \|_{L_{x}^{\infty}} + 
	\|\nabla\( \tfrac{\chi_{n}}{|\cdot+x_{n}|^{\mu}} \)\|_{L_{x}^{\infty}}]\|Q\|_{H_{x}^{1}}}}\\
	&\lesssim T[|x_{n}|^{-\mu}+|x_{n}|^{-(\mu+1)}]\to 0,
\end{align*}
as $n\to \infty$.
Therefore, for any $T > 0$ fixed, by interpolation we get
\[
\||\nabla|^{\frac{1}{2}}e_{n}\|_{N([-T,T])}\to 0 \qtq{as $n\to \infty$.}
\]
But then, since
\begin{align*}
	&\|\tilde{v}_{n}(0)-{{{\varphi}_{n}}}\|_{\dot{H}^{\frac{1}{2}}}=
	\|(1-\epsilon_{n})(\chi_{n}-1)Q\|_{\dot{H}^{\frac{1}{2}}}\to 0,\\
	&\| \tilde{v}_{n}  \|_{L^{5}_{t,x}([-T, T]\times \R^{3})}\gtrsim_{Q}T \qtq{for any $T>0$,}
\end{align*}
Lemma \ref{StabilityNLS} implies that 
\[
\| {u}_{n}  \|_{L^{5}_{t,x}([-T, T]\times \R^{3})}\gtrsim_{Q}T
\]
which finished the proof because $T>0$ is arbitrary.
\end{proof}


\begin{thebibliography}{10}

\bibitem{AkahoriNawa2013}
{\sc T.~Akahori and H.~Nawa}, {\em Blowup and scattering problems for the
  nonlinear {S}chr\"odinger equations}, Kyoto J. Math., 53 (2013),
  pp.~629--672.

\bibitem{Ardila2022}
{\sc A.~H. Ardila}, {\em On blow-up of the threshold solutions for the focusing
  {NLS} with a repulsive potential}, Preprint,  (2021).

\bibitem{ArdilaInui2022}
{\sc A.~H. Ardila and T.~Inui}, {\em Threshold scattering for the focusing
  {NLS} with a repulsive dirac delta potential}, J. Differ. Equ., 313 (2022),
  pp.~54--84.



\bibitem{Dinh2021N}
{\sc V.~D. Dinh}, {\em Non-radial scattering theory for nonlinear
  {S}chr\"odinger equations with potential}, Nonlinear Differ. Equ. Appl., 28
  (2021).

\bibitem{DodsonMurphy2018}
{\sc B.~Dodson and J.~Murphy}, {\em A new proof of scattering below the ground
  state for the non-radial focusing {NLS}}, Math. Res. Lett., 25 (2018).

\bibitem{DuyLanRou2022}
{\sc T.~Duyckaerts, O.~Landoulsi, and S.~Roudenko}, {\em Threshold solutions in
  the focusing {3D} cubic {NLS} equation outside a strictly convex obstacle},
  J. Funct. Anal., 282 (2022), p.~109326.

\bibitem{DuyckaertsRou2010}
{\sc T.~Duyckaerts and S.~Roudenko}, {\em Threshold solutions for the focusing
  {3D} cubic {S}chr\"odinger equation}, Rev. Mat. Iberoam., 26 (2010),
  pp.~1--56.

\bibitem{GuoWangYao2018}
{{\sc Q.~Guo, H.~Wang and X.~Yao}, {\em Dynamics of the focusing 3{D} cubic {NLS}
  with slowly decaying potential}}, J. Math. Anal. Appl., 56 (2022), p.~125653.

\bibitem{HamanoIkeda20}
{\sc M.~Hamano and M.~Ikeda}, {\em Equivalence of conditions on initial data
  below the ground state to {NLS} with a repulsive inverse power potential},
  Preprint arXiv:2004.08788.

\bibitem{2HamanoIkeda2022}
\leavevmode\vrule height 2pt depth -1.6pt width 23pt, {\em Scattering solutions
  to nonlinear Schr\"odinger equation with a long range potential}, Preprint
  arXiv:2104.13577.

\bibitem{TInu2022}
{\sc T.~Inui}, {\em Remark on blow-up of the threshold solutions to the
  nonlinear Schr\"odinger equation with the repulsive dirac delta potential},
  Preprint,  (2021).

\bibitem{KillipMurphyVisanZheng}
{\sc R.~Killip,  J.~Murphy, M.~Visan, and J.~Zheng}, {\em The focusing cubic NLS with
  inverse square potential in three space dimensions}, Diff. Inte. Equ., 30
  (2017), pp.~161--206.

\bibitem{SLC}
{\sc S.~{Le~Coz}}, {\em Standing waves in nonlinear {S}chr{\"o}dinger
  equations}, In: Analytical and Numerical Aspects of Partial Differential
  Equations, Walter de Gruyter, Berlin,  (2009), pp.~151--192.

\bibitem{LU20183174}
{\sc J.~Lu, C.~Miao, and J.~Murphy}, {\em Scattering in ${H}^{1}$ for the
  intercritical {NLS} with an inverse-square potential}, J. Differential
  Equations, 264 (2018), pp.~3174--3211.

\bibitem{MiaMurphyZheng2021}
{\sc C.~Miao, J.~Murphy, and J.~Zheng}, {\em Threshold scattering for the
  focusing {NLS} with a repulsive potential}, Preprint arXiv:2102.07163, To
  appear in Indiana Univ. J. Math.,  (2021).

\bibitem{MiaoZhanZheng2018}
{\sc C.~Miao, J.~Zhang, and J.~Zheng}, {\em Nonlinear {S}chr\"odinger equation
  with {C}oulomb potential}, Preprint, arXiv:1809.06685.,  (2018).


\bibitem{ZhangZheng2014}
{\sc J.~Zhang and J.~Zheng}, {\em Scattering theory for nonlinear Schr\"odinger equations with inverse-square potential}, J. Funct. Anal., 267
  (2014), pp.~2907--2932.
	
\bibitem{Xu} G.~Xu, {\em Dynamics of some coupled nonlinear Schr\"odinger systems in $\mathbb{R}^3$}, Math. Methods Appl. Sci. {37} (2014), no. 17, 2746--2771.

\end{thebibliography}

\end{document}